\numberwithin{equation}{section}
\renewcommand{\tocsection}[3]
 { \indentlabel{\@ifnotempty{#2}{\parbox{2.5em}{\ignorespaces#1 #2.}\quad}}#3}
\newcommand{\bbold}{\mathbb}
\def\R { {\bbold R} }
\def\Q { {\bbold Q} }
\def\Z { {\bbold Z} }
\def\C { {\bbold C} }
\def\N { {\bbold N} }
\def\T { {\bbold T} }
\def\c {\mathcal{C}}
\def \I{\operatorname{I}}
\def \Ex{\operatorname{E}}
\def \Dx{\operatorname{D}}
\def \order{\operatorname{order}}
\def \ex{\operatorname{e}}
\def \wr {\operatorname{wr}}
\def \Univ {{\operatorname{U}}}
\renewcommand\epsilon{\varepsilon}
\def \d{\operatorname{d}}
\def \<{\langle}
\def \>{\rangle}
\def \tilde {\widetilde}
\def \((  {(\!(}
\def \)) {)\!)}
\def \k {{{\boldsymbol{k}}}}
\DeclareMathSymbol{\precequ}{\mathrel}{symbols}{"16}
\DeclareMathSymbol{\succequ}{\mathrel}{symbols}{"17}
\def \nasymp{\not\asymp}
\renewcommand{\Re}{\operatorname{Re}}
\renewcommand{\Im}{\operatorname{Im}}
\newcommand{\claim}[2][\!\!]{\medskip\noindent {\bf Claim #1:} {\it #2}\medskip}
\newtheorem{theorem}{Theorem}[section]
\newtheorem{lemma}[theorem]{Lemma}
\newtheorem{prop}[theorem]{Proposition}
\newtheorem{cor}[theorem]{Corollary}
\newtheorem*{theoremUnnumbered}{Theorem}
\newtheorem*{theoremA}{Theorem A}
\newtheorem*{theoremB}{Theorem B}
\theoremstyle{definition}
\theoremstyle{remark}
\newtheorem*{example}{Example}
\newtheorem*{remarks}{Remarks}
\newtheorem*{question}{Question}
\newcommand{\abs}[1]{\lvert#1\rvert}
\def \fM {{\mathfrak M}}
\def \No{\text{{\bf No}}}
\def \Zero{\operatorname{Z}}
\let\oldi\i
\let\oldj\j
\renewcommand\i{\relax\ifmmode{\boldsymbol{i}}\else\oldi\fi}
\renewcommand\j{\relax\ifmmode{\boldsymbol{j}}\else\oldj\fi}
\renewcommand\leq{\leqslant}
\renewcommand\geq{\geqslant}
\renewcommand\preceq{\preccurlyeq}
\renewcommand\le{\leq}
\renewcommand\ge{\geq}
\DeclareMathAlphabet{\mathbf}{OML}{cmm}{b}{it}
\DeclareFontFamily{U}{fsy}{}
\DeclareFontShape{U}{fsy}{m}{n}{<->s*[.9]psyr}{}
\DeclareSymbolFont{der@m}{U}{fsy}{m}{n}
\DeclareMathSymbol{\der}{\mathord}{der@m}{182}
\DeclareSymbolFont{der@m}{U}{fsy}{m}{n}
\DeclareMathSymbol{\derdelta}{\mathord}{der@m}{100}
\DeclareSymbolFont{imag@m}{OT1}{cmr}{m}{ui}
\DeclareMathSymbol{\imag}{\mathord}{imag@m}{105}
\DeclareFontFamily{OMS}{smallo}{}
\DeclareFontShape{OMS}{smallo}{m}{n}{<->s*[.65]cmsy10}{}
\DeclareSymbolFont{smallo@m}{OMS}{smallo}{m}{n}
\DeclareMathSymbol{\smallo}{\mathord}{smallo@m}{79}
\DeclareFontFamily{OMS}{largerdot}{}
\DeclareFontShape{OMS}{largerdot}{m}{n}{<->s*[.8]cmsy10}{}
\DeclareSymbolFont{largerdot@m}{OMS}{largerdot}{m}{n}
\DeclareMathSymbol{\largerdot}{\mathord}{largerdot@m}{15}
\DeclareMathSymbol{\llambda}{\mathord}{der@m}{108}
\DeclareMathSymbol{\rrho}{\mathord}{der@m}{114}
\def \Upg{\Upgamma}
\def \upl{\uplambda}
\def \Upl{\Uplambda}
\def \upo{\upomega}
\def \Upo{\Upomega}
\def\HLO{\Upl\Upo}
\newcommand{\equationqed}[1]{\[\pushQED{\qed}#1 \qedhere\popQED\]\let\qed\relax}
\newcommand{\alignqed}[1]{\begin{align*}\pushQED{\qed} #1 \qedhere\popQED\end{align*}\let\qed\relax}
\newcommand{\dminus}{\mathbin{\text{\@dminus}}}
\newcommand{\@dminus}{%
  \ooalign{\hidewidth\raise1ex\hbox{\bf.}\hidewidth\cr$\m@th-$\cr}%
}
\def \Gr{\mathcal{C}^r}
\def \Gn{\mathcal{C}^n}
\def \Gi{\mathcal{C}^{<\infty}}
\def \Ginf{\mathcal{C}^{\infty}}
\def \Gom{\mathcal{C}^{\omega}}
\def \Calinf{\mathcal{C}^{<\infty}}
\renewcommand\part{\@startsection{part}{0}%
  \z@{\linespacing\@plus\linespacing}{.5\linespacing}%
  {\normalfont\bfseries\centering}}
\renewcommand\theindex{\@restonecoltrue\if@twocolumn\@restonecolfalse\fi
  \columnseprule\z@ \columnsep 35\p@
  \twocolumn[\@xp\part\@xp*\@xp{\bf Index}\bigskip]%
  \let\item\@idxitem
  \parindent\z@  \parskip\z@\@plus.3\p@\relax
  \small}
\newcommand{\smallbullet}{} 
\DeclareRobustCommand\smallbullet{%
  \mathord{\mathpalette\smallbullet@{0.6}}%
}
\newcommand{\smallbullet@}[2]{%
  \vcenter{\hbox{\scalebox{#2}{$\m@th#1\bullet$}}}%
}
\begin{document}

\title{Relative Differential Closure in Hardy Fields}
\author[Aschenbrenner]{Matthias Aschenbrenner}
\address{Kurt G\"odel Research Center for Mathematical Logic\\
Universit\"at Wien\\
1090 Wien\\ Austria}
\email{matthias.aschenbrenner@univie.ac.at}

\author[van den Dries]{Lou van den Dries}
\address{Department of Mathematics\\
University of Illinois at Urbana-Cham\-paign\\
Urbana, IL 61801\\
U.S.A.}
\email{vddries@math.uiuc.edu}

\author[van der Hoeven]{Joris van der Hoeven}
\address{CNRS, LIX (UMR 7161)\\ 
Campus de l'\'Ecole Polytechnique\\  91120 Palaiseau \\ France}
\email{vdhoeven@lix.polytechnique.fr}

\date{January 2025}

\begin{abstract} 
We study relative differential closure in the context of Hardy fields.
Using our earlier work on algebraic differential
equations over Hardy fields,
this leads to a proof of a conjecture of Boshernitzan (1981): the intersection of all maximal analytic Hardy fields
agrees with that of all maximal Hardy fields. 
We also generalize a key ingredient in the proof, and describe a cautionary example
delineating the boundaries of its applicability. 
\end{abstract}

\maketitle

\tableofcontents

\section*{Introduction}

\noindent
Hardy's monograph \textit{Orders of Infinity}\/\endnote{The first edition (1910) of \cite{Ha}   was reviewed for this journal by G\"odel's PhD advisor Hahn~\cite{Hahn}.}\cite{Ha} founded an
asymptotic calculus of non-oscillating real-valued functions, building on
earlier ideas by du Bois-Reymond \cite{dBR75}. Hardy introduced the class of 
{\it logarithmic-exponential functions} ({\it LE-functions}\/, for short):
functions constructed in finitely many steps from real constants and the identity function $x$ using arithmetic operations, exponentiation, and logarithm.
He observed that this class allows one to describe the growth rates at infinity of many functions that
naturally arise in mathematics. In his own words~\cite[p.~48]{Ha}:
\begin{quote}
{\it No function has yet presented itself in analysis the
laws of whose increase, in so far they can be stated at all, cannot be stated, so to
say, in logarithmico-exponential terms.}
\end{quote}
Typical examples of LE-functions defined on $(1, +\infty)$ are $x^r$ ($r \in \R$), $x^x$, $\ex^{x^2}$, 
and~$(\log x)(\log \log x)$.
Much of the usefulness of the class of LE-functions stems from the fact that their germs at
infinity form what Bourbaki~\cite{Bou}  called a {\it Hardy field}\/: a field $H$ of germs at $+\infty$ of differentiable real-valued functions on intervals~$(a,+\infty)$~($a\in\R$)
such that for any such function with germ in~$H$, the germ of its derivative is
also in~$H$. The basic facts about Hardy fields are due to
 Bourbaki~\cite{Bou}, Sj{\"o}din~\cite{Sj}, Robinson~\cite{Ro},  Boshernitzan~\cite{Boshernitzan81}--\cite{Boshernitzan87}, and Rosenlicht~\cite{Ros}--\cite{Rosenlicht95}. As background we shall mention such facts in this introduction. 

A Hardy field~$H$ such that each~$f\in H$ has a smooth ($\Ginf$) representative  is called  {\it smooth};  likewise we define when $H$ is {\it  analytic}\/.
Most Hardy fields from practice (like Hardy's field of LE-functions) are analytic; but not every Hardy field is analytic, or even smooth\endnote{Boshernitzan~\cite{Boshernitzan81} (with details in \cite{Gokhman}) first suggested an example of a non-smooth Hardy field. Rolin, Speissegger, Wilkie~\cite{RSW} construct o-minimal expansions $\tilde{\R}$ of the ordered field of real numbers such that 
  the Hardy field $H$ consisting of the germs of functions~$\R \to\R$ that are definable in $\tilde{\R}$ is smooth but not analytic. Le Gal and Rolin~\cite{LeGalRolin} construct such expansions such that   the corresponding Hardy field $H$ is not smooth.}.
Every Hardy field is naturally
a differential field, and an ordered field: the germ of a function~$f$ is declared to be positive whenever~$f (t)$ is
eventually positive. In addition to the ordering we also use the asymptotic relations~$\preceq$,~$\prec$,~$\asymp$ to compare germs $f$, $g$ in a Hardy field: 
 \begin{align*}
f \preceq g & \quad:\Longleftrightarrow \quad f =O(g)\quad  :\Longleftrightarrow\quad  \text{$| f | \le c |g|$ for some real $c>0$} \\
f \prec g & \quad:\Longleftrightarrow \quad  f =o(g) \quad \ :\Longleftrightarrow\quad  \text{$| f |<c |g|$ for all real $c>0$}\\
 f\asymp g & \quad:\Longleftrightarrow\quad  f\preceq g \text{ and }g\preceq f;\quad \, f\succ g\quad :\Longleftrightarrow\ \quad g\prec f.
\end{align*}
(The $\preceq$-notation of du Bois-Reymond predates the big $O$-notation of Bachmann and Landau, and is
more  convenient in dealing with Hardy fields.)
The basic operations of calculus play well with the ordering and asymptotic relations on  Hardy fields. For example, given any germs $f$, $g$ in a common Hardy field,  
$$f>0,\ f\succ 1\Rightarrow f'>0,\qquad  f\prec g\nasymp 1\Rightarrow f'\prec g'.$$
The germ of a non-oscillating differentially algebraic function usually lies in a Hardy field. Besides the LE-functions, this is also the case for special
functions like the error function $\operatorname{erf}$, the exponential integral $\operatorname{Ei}$, the Airy functions $\operatorname{Ai}$
and~$\operatorname{Bi}$, etc. Many differentially transcendental functions,
like the Riemann $\zeta$-function and Euler's $\Gamma$-function, also have their germs in Hardy fields.   

Characteristic of Hardy fields is that their elements are
non-oscillating in a strong sense: if $f$ is a germ in a Hardy field~$H$, then not only is the sign of $f(t)$   ultimately constant, but
also each differential-polynomial expression in $f$ such as 
$$g(t)f''(t)^3-h(t)f'(t)f(t)^2+2\qquad (g,h\in H).$$ This property is reflected in the existence of a field ordering
on $H$ as well as  the relations~$\preceq$ and~$\prec$ that are so useful in
asymptotics. 
Functions that are non-oscillating in such a strong sense may be viewed as {\it tame}\/. In certain
applications, establishing tameness   is   decisive: for example, it plays an important role in \'Ecalle's work \cite{Ecalle} on Dulac's conjecture
(a weakened version of Hilbert's 16th Problem). An even stronger form of tameness is o-minimality~\cite{Miller}, and the
germs of definable univariate functions of an o-minimal expansion of the real field   form
a Hardy field. This leads to many further examples of Hardy fields~\cite{KRS,RSW}. 

The class of LE-functions is rather small. For example, the antiderivatives of~$\ex^{x^2}$  have their germs in
 a Hardy field but are not LE-functions (Liouville,
 cf.~\cite{Rosenlicht72}). Hardy's quote   above notwithstanding, the functional inverse of~$(\log x)( \log \log x)$
turned out to not even be asymptotic to an LE-function~\cite{DMM1,vdH:PhD} (and yet its germ also lies in a Hardy field).
There are also (analytic) solutions of simple functional equations in Hardy fields ultimately outgrowing all LE-functions~\cite{Boshernitzan86}.

Any Hardy field has a unique algebraic Hardy field extension that is real closed; see ~\cite{Sj} and~\cite{Ro}. For any germ $f$ in a Hardy 
field $H$, its exponential $\ex^f$, its logarithm~$\log f$ (if $f >0$), and any primitive of $f$  
lie in a Hardy field extending~$H$; see~\cite{Bou}.  More generally, if~$H$ is a Hardy field and 
$P,Q\in H[Y]\setminus \{0\}$, then each germ of a $\mathcal C^1$-function $y$ satisfying~$y'Q(y)=P(y)$ belongs to a
Hardy field extending $H$; see~\cite[Theorem~2]{Ros} and~\cite{Singer}. 
In~\cite{ADH2} we proved what is in some sense the
ultimate result on solving algebraic differential equations in Hardy fields:

{\samepage
\begin{theoremUnnumbered}
Given any Hardy field $H$, polynomial $P\in H[Y_0,\dots,Y_n]$ and~$f,g \in H$ with $f<g$ and $P(f,f',\dots,f^{(n)})<0<P(g,g',\dots,g^{(n)})$,   there is a $y$ in a Hardy field extension of $H$
 such that $f<y<g$ and $P(y,y',\dots,y^{(n)})=0$.
\end{theoremUnnumbered}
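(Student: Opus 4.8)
The plan is to separate an algebraic core from an analytic one. Every Hardy field becomes an $H$-field once equipped with the dominance relation $\preceq$ as valuation, so I would work in the category of $H$-fields and abbreviate $P^\ast(u):=P(u,u',\dots,u^{(n)})$. The algebraic core I would invoke is the intermediate value property (IVP) for differential polynomials from the model theory of transseries: in a Liouville closed $H$-field that is $\omega$-free and newtonian, whenever $a<b$ and $P^\ast(a)<0<P^\ast(b)$ there is a $c$ with $a<c<b$ and $P^\ast(c)=0$. Newtonianity is the heart of this — the differential analogue of henselianity, producing zeros of differential polynomials of Newton degree one — while $\omega$-freeness makes the Newton-polygon reduction from arbitrary Newton degree to degree one behave uniformly, and real closure (implied by Liouville closure) underlies the sign analysis on the interval $(a,b)$.

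Granting the abstract IVP, the theorem reduces to producing a single Hardy field extension of $H$ in which it applies. I would take $M$ to be a \emph{maximal} Hardy field containing $H$; such $M$ exist by Zorn's lemma, since the union of a chain of Hardy fields is again a Hardy field. The point of passing to a Hardy field (rather than some abstract $H$-field extension) is that in $M$ the relations $<$, $\preceq$, $\prec$ are literally germ comparison, so the hypotheses $f<g$ and $P^\ast(f)<0<P^\ast(g)$ hold unchanged in $M$. If $M$ is $\omega$-free, newtonian and Liouville closed, the abstract IVP yields $y\in M$ with $f<y<g$ and $P^\ast(y)=0$, and this $y$ visibly lies in a Hardy field extension of $H$, as desired.

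Thus everything comes down to showing that a maximal Hardy field $M$ is $\omega$-free, newtonian and Liouville closed, and this is where the real work, and the main obstacle, lies. Liouville and real closure are comparatively classical: closure under exponentiation, logarithm, and integration follows from the extension theorems of Bourbaki and Rosenlicht, and real closure from Sj\"odin and Robinson, so maximality forces $M$ to contain all these extensions. Newtonianity, by contrast, cannot be obtained formally, since a Newton-degree-one equation must be solved by an \emph{actual} germ of a real function. Here I would combine the valuation-theoretic reduction to Newton degree one with genuine analysis — compositional conjugation to make the derivation small, normalization of the differential polynomial near its putative solution, and ODE existence together with growth estimates ensuring the resulting function is again the germ of a differentiable function on a half-line whose derivatives lie in the extension, so that it forms a Hardy field with $M$. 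Were such a solution absent, adjoining it would properly extend $M$, contradicting maximality; establishing that this adjunction is always analytically possible is the crux. The same maximality scheme, applied to the elements witnessing $\omega$-freeness, shows $M$ is $\omega$-free, and with all three properties in hand the abstract IVP completes the proof.
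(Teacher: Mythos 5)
Your proposal follows essentially the same route as the paper: the statement is precisely the assertion that maximal Hardy fields have DIVP, which the paper obtains from Theorem~\ref{thm:maxhardymainthm} (maximal, hence $\d$-maximal, Hardy fields containing $\R$ are closed $H$-fields, i.e.\ Liouville closed, $\upo$-free, and newtonian, by \cite{ADH2}) combined with the fact that closed $H$-fields satisfy the differential intermediate value property \cite{ADHip}. The analytic crux you single out --- establishing newtonianity and $\upo$-freeness of maximal Hardy fields by actually producing germs solving Newton-degree-one equations, via compositional conjugation, normalization, and ODE existence with growth estimates --- is exactly where the bulk of \cite{ADH2} and its companion papers \cite{ADH4,ADH5,ADHmax} is spent, and your sketch of that step matches their strategy.
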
}

\noindent
We convert this into an intermediate value property by means of the concept of
a {\it maximal}\/ Hardy field: a Hardy field that is not contained in any strictly
larger one. Likewise we define maximal smooth Hardy fields and maximal analytic Hardy fields.
Any Hardy field of either type is contained in a maximal one of the same type, by Zorn.
Now the theorem can be rephrased as: maximal Hardy
fields have DIVP (the {\it Differential Intermediate Value Property}\/), where a Hardy field $H$ is said to have DIVP if
for all $P\in H[Y_0,\dots,Y_n]$ and~$f,g \in H$ such that~$f<g$ and~${P(f,f',\dots,f^{(n)})\, <\, 0\, <\, P(g,g',\dots,g^{(n)})}$,
there is a $y\in H$ such that~${f<y<g}$ and $P(y,y',\dots,y^{(n)})=0$.  By the way, \cite{ADH2} also shows that maximal smooth and maximal analytic Hardy fields have DIVP.

DIVP  essentially captures all properties of maximal Hardy fields that
can be stated in the language of ordered differential fields, in
analogy with the intermediate value property for ordinary polynomials
capturing the property for an ordered field to be real closed. For a
further explanation of this statement and numerous consequences of the
above theorem\endnote{G\"odel's Completeness Theorem from his PhD thesis~\cite{Goedel30} is in the background of the model theoretic tools used in proving the completeness of the theory of maximal Hardy fields, which in particular entails the existence of a decision procedure for this theory.} we refer to the introduction to \cite{ADH2}.

Some (germs of) functions are ``absolutely tame'' in the sense that they belong to {\it every}\/
maximal Hardy field. This holds in particular for all LE-functions, 
and Boshernitzan~\cite{Boshernitzan81, Boshernitzan82,Boshernitzan86} promoted the study of these germs as   natural generalizations of
 LE-functions. 
The Hardy field~$\Ex$ of absolutely tame germs is rather extensive: for example,
it is closed under exponentiation, logarithm, and taking antiderivatives; more generally,  any solution $y\in\mathcal C^1$ of an equation~$y'Q(y)=P(y)$
where~${P,Q\in \Ex[Y]\setminus\{0\}}$ also has its germ in $\Ex$ (by a result mentioned earlier). Thus, for instance, $\arctan$ and the Gaussian integrals $\int^x \ex^{-t^2}dt$  are in $\Ex$. Moreover, if~$f\in\Ex$ and~${f\preceq 1}$, then~$\cos f,\sin f\in\Ex$. But Boshernitzan~\cite[Proposition~3.7]{Boshernitzan86} also exhibited germs in Hardy fields not belonging to~$\Ex$:
the germ of {\it any}\/ $\c^2$-function satisfying~$y''+y=\ex^{x^2}$
lies in a Hardy field\endnote{The existence of a solution to this
equation in each maximal Hardy field $H$ follows from the above theorem
with $P(Y)=Y''+Y-\ex^{x^2}$, $f =0$, and $g=\ex^{x^2}$.},
but   no maximal Hardy field   contains more than one\endnote{The difference of two distinct solutions to the equation $y''+y=\ex^{x^2}$ is a nonzero solution to the
homogeneous equation $y+y''=0$ and thus oscillating.} and hence none of them lies in~$\Ex$. As a consequence, $\Ex$ does not have DIVP\endnote{This argument  also suggests that there exist many maximal Hardy fields: in \cite{ADH6}, we show that there are actually $2^{2^{\aleph_0}}$
many.}. Boshernitzan also considered the intersection $\Ex^\infty$ of all maximal smooth Hardy fields and 
the intersection $\Ex^\omega$ of all maximal analytic Hardy fields, and conjectured
that $\Ex = \Ex^\infty = \Ex^\omega$ \cite[\S{}10, Conjecture~1]{Boshernitzan81}.
As positive evidence, in \cite[(20.1)]{Boshernitzan82} he obtained $\Ex\subseteq\Ex^\infty\subseteq\Ex^\omega$. In this paper we prove this conjecture:
  
\begin{theoremA}
Any germ lying in all maximal analytic Hardy fields also lies in all maximal Hardy fields.
\end{theoremA}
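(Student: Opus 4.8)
I would argue the theorem in the form $\Ex^\omega\subseteq\Ex$ directly: assuming $f\in\Ex^\omega$, I fix an arbitrary maximal Hardy field $M$ and show $f\in M$. Since $f$ lies in every maximal analytic Hardy field, and each of these consists of analytic germs, $f$ has an analytic representative. The plan is to exhibit $f$ as the last term of a tower of analytic germs, each \emph{forced} over the previous ones, and then to push this tower into $M$ one step at a time, using the main theorem of \cite{ADH2} quoted above (DIVP) together with a relative differential closure principle to guarantee that the forced germs are the \emph{same} in $M$ as in the analytic world.

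First I would produce the tower. Because $f$ belongs to every maximal analytic Hardy field, I expect it to be reachable from $\R\langle x\rangle$ by finitely many analytic, canonically determined steps: adjoining exponentials and logarithms, adjoining a primitive normalized by an asymptotic side condition (for instance a primitive tending to $0$), and adjoining a solution of an algebraic differential equation $P(Y)=0$ pinned between two given germs on which $P$ changes sign. Concretely I would build $\R\langle x\rangle=E_0\subseteq E_1\subseteq\cdots\subseteq E_n$ inside $\Ex^\omega$ with $f\in E_n$, where each $E_{i+1}=E_i\langle a_{i+1}\rangle$ and $a_{i+1}$ is the unique germ, in \emph{any} maximal analytic Hardy field extending $E_i$, satisfying the prescribed equation and side conditions; this uniqueness is exactly the assertion that $a_{i+1}$ lies in all such fields, i.e.\ is canonical over $E_i$ in the analytic world.

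The second, and main, step is an induction showing $E_i\subseteq M$ for all $i$. Suppose $E_i\subseteq M$. The data defining $a_{i+1}$ now live over $M$, so by DIVP there is a germ $b\in M$ satisfying the same equation $P(Y)=0$ and the same sign and asymptotic conditions. It remains to see that $b=a_{i+1}$. In the analytic world this solution was pinned uniquely; but a priori $M$—being only one of very many maximal Hardy fields—might realize a \emph{different}, non-analytic solution, just as $y''+y=\ex^{x^2}$ admits distinct Hardy-field solutions differing by an oscillating germ. The relative differential closure theorem developed in the paper is precisely what rules this out: it should guarantee that an analytic Hardy field is relatively differentially closed in any Hardy field extension, so that a solution forced over the analytic base $E_i$ in the analytic world is equally forced over $E_i$ in $M$. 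Hence $b=a_{i+1}$, giving $E_{i+1}\subseteq M$, and finally $f\in E_n\subseteq M$.

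The hard part is the transfer in the third step: upgrading \emph{analytic} uniqueness of the forced germ to uniqueness among \emph{all} Hardy field extensions. This is delicate because there are strictly more maximal Hardy fields than analytic ones, so one must show that the extra, non-analytic freedom never produces a competitor to a germ that is analytically forced; controlling the asymptotics of the putative alternative $b$ and excluding any nonzero oscillating difference $b-a_{i+1}$ is the crux, and it is exactly here that relative differential closure must enter. The cautionary example announced in the abstract signals that the hypotheses of this transfer principle are genuinely necessary and cannot simply be dropped.
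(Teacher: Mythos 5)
Your overall strategy --- realizing $f$ as the top of a finite tower of ``analytically forced'' extensions of $\R\langle x\rangle$ and transferring the tower step by step into an arbitrary maximal Hardy field $M$ --- is not the paper's argument, and both of its load-bearing steps are unsupported. First, the existence of the tower is simply asserted (``I expect it to be reachable\dots''). Nothing in the hypothesis $f\in\Ex^\omega$ gives a finite chain of extensions, each pinned down by an equation plus sign/asymptotic side conditions admitting a \emph{unique} solution in every maximal analytic Hardy field: DIVP produces solutions but says nothing about uniqueness, and the paper's own example $y''+y=\ex^{x^2}$ shows that an equation with side conditions can be realized by different germs in different maximal Hardy fields. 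What is actually available, and what the paper uses as its \emph{only} input about $\Ex^\omega$, is the much weaker fact that every $f\in\Ex^\omega$ is $\d$-algebraic over $\Q$ (via \cite[Theorem~5.20]{ADH5}, since $\Q$ is bounded). Second, the transfer principle you invoke --- ``an analytic Hardy field is relatively differentially closed in any Hardy field extension'' --- is false as stated: $\R(x)$ is an analytic Hardy field, $\ex^x$ is $\d$-algebraic over it and lies in Hardy field extensions of it, but $\ex^x\notin\R(x)$. The correct ingredient is Proposition~\ref{prop:Hardy field ext smooth}: every $\d$-\emph{algebraic} Hardy field extension of an analytic Hardy field is again analytic.

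The paper's proof runs in the opposite direction and needs no tower. Fix a maximal Hardy field $M$ and set $F:=M\cap\Gom$. By Proposition~\ref{prop:Hardy field ext smooth}, any element of $M$ that is $\d$-algebraic over $F$ generates an analytic $\d$-algebraic Hardy field extension of $F$ inside $M$ and hence lies in $F$; so $F$ is $\d$-closed in $M$, and therefore $\d$-maximal by Corollary~\ref{cor:char d-max} --- this is where the relative differential closure machinery (Theorem~\ref{thm:16.0.3} and Proposition~\ref{prop:16.0.3 converse}) genuinely enters. Extend $F$ to a maximal analytic Hardy field $M'$; then $f\in M'$ because $f\in\Ex^\omega$, and $f$ is $\d$-algebraic over $\Q\subseteq F$, so $\d$-maximality of $F$ forces $f\in F\subseteq M$. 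To salvage your approach you would have to prove the tower decomposition of $\Ex^\omega$ together with the cross-field uniqueness of each forced step, neither of which is established anywhere and which together are likely harder than the theorem itself.
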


\noindent
By \cite[Theorem~14.3]{Boshernitzan82}, each~$f\in\Ex^\omega$ is differentially algebraic. So it may not be surprising that the proof of Theorem~A centers on a study of the notion of relative differential closure in the context
of Hardy fields: if $E\supseteq H$ are Hardy fields, the {\it differential closure}\/ of~$H$ in~$E$ is the set
of all $y\in E$ which are differentially algebraic over $H$, that is, satisfy an equation~$P(y,y',\dots,y^{(n)})=0$
for some $n$ and nonzero~$P\in H[Y_0,\dots,Y_n]$, and we say that $H$ is {\it differentially closed}\/ in $E$ if 
it equals its differential closure   in~$E$.
 A crucial ingredient for this study is a   differential transcendence result~[ADH, 16.0.3], of which we
prove here a variant:

\begin{theoremB}
Let $E\supseteq H$ be Hardy fields where $H$ properly extends $\R$ and has \textup{DIVP}.
Then $H$ is differentially closed in $E$ iff $E\cap \exp(H) \subseteq H$.
\end{theoremB}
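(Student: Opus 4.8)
The plan is to treat the two implications separately; the direction ``differentially closed $\Rightarrow E\cap\exp(H)\subseteq H$'' is immediate, and essentially all the content lies in the converse, which I would reduce to the Hardy-field instance of the differential-transcendence result [ADH,~16.0.3].

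For the forward direction, suppose $H$ is differentially closed in $E$ and let $h\in H$ be such that $\exp(h)\in E$. Put $y:=\exp(h)\in E$; then $y'=h'y$, so $y$ is a zero of the nonzero differential polynomial $Y_1-h'Y_0\in H[Y_0,Y_1]$ and is therefore differentially algebraic over $H$. By differential closedness $y\in H$, and since $h$ was arbitrary this gives $E\cap\exp(H)\subseteq H$. (Note that this implication uses neither DIVP nor $H\supsetneq\R$.)

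For the converse I would argue contrapositively: assuming $H$ is \emph{not} differentially closed in $E$, I produce $h\in H$ with $\exp(h)\in E\setminus H$, so that $E\cap\exp(H)\not\subseteq H$. Fix $y\in E\setminus H$ differentially algebraic over $H$ and let $F:=H\langle y\rangle\subseteq E$ be the Hardy field it generates over $H$, a proper differentially algebraic extension. As a Hardy field, $F$ has constant field $\R$, so the extension $F\supseteq H$ adds no new constants; and by \cite{ADH2} the hypothesis that $H\supsetneq\R$ has DIVP puts $H$ in the scope of [ADH,~16.0.3], namely $H$ is real closed, $\upomega$-free, and newtonian. The engine of the proof is then the following differential-transcendence statement, which is exactly the Hardy-field variant advertised in the excerpt: \emph{for such $H$, every proper differentially algebraic Hardy field extension $F$ of $H$ contains an element $\exp(h)$ with $h\in H$ and $\exp(h)\notin H$.} Granting this, I obtain $h\in H$ with $\exp(h)\in F\setminus H\subseteq E\setminus H$, whence $\exp(h)\in(E\cap\exp(H))\setminus H$, as required.

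The main obstacle is precisely this structural claim, i.e.\ proving that a genuinely new differentially algebraic element over $H$ must be accompanied by a new exponential of an element of $H$. This is where DIVP is indispensable and where one must descend to the valuation-theoretic machinery of asymptotic couples: real-closedness rules out new algebraic elements, the coincidence of constant fields rules out new constants, and newtonianity together with $\upomega$-freeness shows that the ``generic'' differentially algebraic solutions---those bracketed by a sign change, which DIVP already solves inside $H$---contribute nothing new; what remains is an analysis showing that the only way to enlarge $H$ differential-algebraically, once integration has been accounted for, is by exponential integration, which manufactures an element with logarithmic derivative in $H$ and hence (after integrating within $H$) an element of $\exp(H)$. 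Carrying out this dichotomy inside the \emph{fixed} Hardy field $E$---rather than in some abstract extension furnished by [ADH,~16.0.3]---and checking that the witness can be located already in $F=H\langle y\rangle$, is the delicate point that the proof of the variant must supply.
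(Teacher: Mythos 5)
Your proposal is correct and takes essentially the same route as the paper: both directions come down to the group of logarithmic derivatives, and the engine is the paper's generalization of [ADH, 16.0.3] (Theorem~\ref{noextension}/Corollary~\ref{cor:noextension}: for an $\upo$-free, newtonian, real closed $H$-field $H$, being $\d$-closed in an $H$-field extension $E$ is equivalent to $C_E=C_H$ and $E^\dagger\cap H=H^\dagger$), which is your ``structural claim'' in contrapositive form once closure under integration is used to turn an element of $E^\times$ whose logarithmic derivative lies in $H\setminus H^\dagger$ into a constant multiple of $\exp(h)$ with $h\in H$. One caution: your remark that DIVP puts $H$ ``in the scope of [ADH, 16.0.3]'' is not accurate, since that theorem (Theorem~\ref{thm:16.0.3} here) concerns closed $H$-fields and hence requires Liouville closedness, which DIVP together with $H\supset\R$ does not yield (see the remark following Corollary~\ref{corhe} for an analytic counterexample); this is exactly why the variant with the additional hypothesis $E^\dagger\cap H=H^\dagger$ is indispensable, and verifying that hypothesis is precisely where the assumption $E\cap\exp(H)\subseteq H$ and closure under integration enter.
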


\noindent
We   include an example showing that the hypothesis $H\supset\R$ in Theorem~B cannot be replaced by the condition
$E\cap\R=H\cap\R\ne H$.   This uses a result of
Rosenlicht~\cite{Rosenlicht74}.

\subsection*{From du Bois-Reymond and Hardy to \'Ecalle, Conway, and G\"odel.}
An attractive feature of Hardy fields is that their elements are actual functions (more precisely, germs of such). 
To conclude this introduction we recall two alternative universal frameworks for tame asymptotics in which growth rates are explicitly represented, namely {\it transseries}\/ and {\it surreal numbers.}\/
Trans\-series (\`a la \'Ecalle~\cite{Ecalle}) are constructed from a formal indeterminate $x$ and the real numbers using the field operations, exponentiation, logarithms, and certain kinds of {\it infinite}\/ summation\endnote{For a construction and basic properties of the field of transseries, see [ADH, Appendix~A]. A more leisurely exposition is in~\cite{Edgar}.}. They are generalized series (as in Hahn~\cite{Hahn07}) with real coefficients and monomials that themselves
are exponentials of ``simpler'' transseries, such as the third term of 
$$f = \ex^{\frac{1}{2}\ex^x}-5\ex^{x^2}+\ex^{x+x^{1/2}+x^{1/3}+\cdots}+\sqrt[3]{2}\log x-x^{-1}+\ex^{-x}+\ex^{-2x}+\cdots+5\ex^{-x^{3/2}}.$$
The transseries form a field which,
like each Hardy field, naturally comes equipped with a derivation $\frac{d}{d x}$ and an ordering making it an ordered differential field~$\T$.

Surreal numbers, invented by J.~H.~Conway~\cite{ONAG} in connection with game theory, have a more combinatorial flavor. 
They include both real numbers and Cantor's ordinal numbers, forming a proper class naturally equipped with an ordering and arithmetic operations making it a real closed ordered field extension $\No$ of $\R$. For example, with $\omega$ the first infinite ordinal,~$\omega-\pi$,  $1/\omega$, $\sqrt{\omega}$,
make sense as surreal numbers\endnote{A brief summary of work on $\No$ past Conway is in \cite{Ryba}.
We also refer to \cite{Bagayoko:PhD} for an interpretation of surreal
numbers in terms of growth rates at infinity.}. Recently,
Berarducci and Mantova~\cite{BM} constructed a derivation~$\der_{\operatorname{BM}}$ on~$\No$
making it an ordered differential field with field of constants $\R$ and~${\der_{\operatorname{BM}}(\omega)=1}$.

Thus maximal Hardy fields, the field of transseries $\T$, and $\No$ are all extensions of~$\R$ to
ordered differential fields containing both infinite and infinitesimal  elements. 
It is natural to ask about the canonicity of such extensions of the continuum. By~[ADH] and \cite{ADH1+,ADH2}, they share the same first-order theory, as ordered differential fields.
In~\cite{ADH6}  we show that under Cantor's Continuum Hypothesis (CH), any maximal Hardy field is in fact {\it isomorphic}\/ to
the ordered differential subfield~$\No(\omega_1)$ of~$\No$  (consisting of  the surreal numbers of countable length).
In~\cite{AvdD} this is also shown 
for maximal smooth   and maximal analytic Hardy fields\endnote{Let us mention here Hamkin's historical thought experiment~\cite{Hamkins}: an early acceptance of CH as a standard set-theoretic axiom alongside the usual ZFC axioms  would presumably have promoted a more wide-spread use of infinitesimals in the style of non-standard analysis, because
CH guarantees the existence of up-to-isomorphism unique saturated elementary extensions of the real field
and its expansions of size $2^{\aleph_0}$. (Without CH there is no such uniqueness.) 
We also note that Cantor~\cite[VI]{Cantor} was hostile to Paul du Bois-Reymond's infinitesimals, which are at the root of Hardy fields and which put  the ``actual infinitesimal'' on as solid a footing as Cantor's ``actual infinite''. Maybe Cantor saw this as unwelcome competition.}
Without even assuming CH, 
one can  embed~$\T$ into each maximal analytic Hardy field and also into~$\No(\omega_1)$, cf.~\cite{AvdD,ADH1+}.
One is left to wonder what G\"odel would have made of this in light of his long standing interest in CH, his appreciation of A.~Robinson's nonstandard analysis, and his  musings,  reported by Conway, whether or not {\it  a solution to the Continuum Hypothesis might yet be possible, but only once the correct theory of infinitesimals had been found}\/~\cite[pp.~209--213]{Roberts}.

\subsection*{Organization of the paper} 
To keep the length of this paper at bay we assume familiarity with the basic setup of [ADH]. (For a brief synopsis which should suffice for reading the present paper see the section {\it Concepts and Results from \textup{[ADH]}}\/ in the introduction to~\cite{ADH4}\endnote{See also \cite{ADH4} or the regularly updated page \url{https://tinyurl.com/ADH-errata} for a list of errata to [ADH].}.) Section~\ref{sec:prelims} has additional definitions and results
from the papers~\cite{ADH5, ADH4,ADH2} used in this note.
Section~\ref{sec:hens} contains a Hensel type Lemma for analytic functions on Hahn fields. This is
applied to solve  certain equations involving power functions in~$\T$, for use in connection with Theorem~B.
We then study differential closure, first  in the general setting of differential fields (Section~\ref{sec:diff closure}), then in  $H$-fields (Sections~\ref{sec:revisiting} and~\ref{sec:diff cl H-fields}),
before proving Theorems~A and~B in Section~\ref{sec:d-cl hardy}.

\subsection*{Notations and conventions} 
For this we follow [ADH]. In particular,  $m$,~$n$   range over the set~$\N=\{0,1,2,\dots\}$ of natural numbers. 
  Given an ordered abelian group~$\Gamma$, additively written, we put $\Gamma^>:=\{\gamma\in\Gamma:\gamma>0\}$.
For an additively written abelian group $A$   set $A^{\ne}:=A\setminus\{0\}$. Given a commutative ring $R$ (always with identity $1$), 
$R^\times$ denotes the multiplicative group of units of~$R$. (So if $K$ is a field, then~$K^{\neq}=K^\times$.)
If $R$ is a differential ring (by convention containing~$\Q$ as a subring) and $y\in R^\times$, then~$y^\dagger=y'/y$ denotes the logarithmic derivative of $y$, so~$(yz)^\dagger = y^\dagger + z^\dagger$ for $y,z\in R^\times$, and
thus  $R^\dagger:=\{y^\dagger:\, y\in R^\times\}$ is an additive subgroup of~$R$. 
 The prefix~``$\d$'' abbreviates ``differentially''; for example, ``$\d$-algebraic'' means ``differentially algebraic''. 

\subsection*{Acknowledgements}
We thank the anonymous referees for helpful remarks which improved the paper.
Joris van der Hoeven has been supported by an ERC-2023-ADG grant for the
ODELIX project (number 101142171). Funded by the European Union. Views and opinions expressed are however those
of the authors only and do not necessarily reflect those of the European
Union or the European Research Council Executive Agency. Neither the European
Union nor the granting authority can be held responsible for them.

\section{Preliminaries}\label{sec:prelims}

\noindent
We begin by recalling definitions, notations, and facts around germs, Hausdorff fields, and Hardy fields  as needed later. Next we briefly discuss
the main result of our paper \cite{ADH2} on the first-order theory of maximal Hardy fields. Finally, we include some   material
on general asymptotic differential algebra from~\cite{ADH4}, before discussing polar coordinates for germs in complexifications of Hardy fields.

\subsection*{Germs}
Let $a$ range over $\R$ and $r$   over $\N\cup\{\omega,\infty\}$. We let $\c^r$ be  the $\R$-algebra of germs at~$+\infty$ of $\R$-valued $\c^r$-functions on half-lines $(a,+\infty)$, for varying $a$, where~$\c^\omega$  means ``analytic''. Thus~$\c:=\c^0$ consists of the germs
at $+\infty$ of continuous   functions~${(a,+\infty)\to\R}$, and
 $$ \c\ =\ \c^0 \ \supseteq \c^1 \ \supseteq\ \c^2  \ \supseteq\  \cdots  \ \supseteq\ \c^\infty  \ \supseteq\ \c^\omega.$$
 The complexification $\c[\imag]=\c+\c\imag$ of $\c$ is the $\C$-algebra consisting of the germs of continuous functions $(a,+\infty)\to\C$, for varying $a$. For $f,g\in \c$ we set $$|f+\imag g|\ :=\ \sqrt{f^2+g^2}\in \c.$$ 
We  have the  $\C$-sub\-al\-ge\-bra~$\Gr[\imag]=\Gr+\Gr\imag$ of $\c[\imag]$.
For~$n\ge 1$ we have the derivation~${g\mapsto g'\colon \Gn[\imag]\to\c^{n-1}[\imag]}$
such that $\text{(germ of $f$)}'=\text{(germ of $f'$)}$ for $\c^n$-functions~$f\colon (a,+\infty)\to\R$, and~${\imag'=0}$. 
Therefore~$\Gi[\imag]  := \bigcap_{n}\, \Gn[\imag]$ 
is naturally a differential ring with ring of constants~$\C$, and
$\Gi  := \bigcap_{n}\, \Gn$
is a differential subring of $\Gi[\imag]$ with ring of constants $\R$.
Note that~$\Gi[\imag]$ has~$\Ginf[\imag]$ as a differential subring, $\Gi$ has $\Ginf$ as a differential subring,
and  $\Ginf$ has in turn
the differential subring~$\Gom$.

\subsection*{Asymptotic relations}
We often use the same notation for a $\C$-valued function 
on a subset of~$\R$ containing an interval $(a, +\infty)$   as for its germ if the resulting ambiguity is harmless.
We equip $\c$ with the partial ordering given by~$f\leq g:\Leftrightarrow f(t)\leq g(t)$ for all sufficiently large real $t$,
and equip $\c[\imag]$ with the asymptotic relations~$\preceq$,~$\prec$,~$\sim$ defined as follows: for $f,g\in \c[\imag]$,
\begin{align*} f\preceq g\quad &:\Longleftrightarrow\quad \text{$|f|\le c|g|$ for some  $c\in \R^{>}$,}\\
f\prec g\quad &:\Longleftrightarrow\quad   \text{$g\in \c[\imag]^\times$ and $\abs{f}\leq c\abs{g}$ for all $c\in\R^>$},\\
f\sim g\quad &:\Longleftrightarrow\quad   f-g\prec g.
\end{align*}

\subsection*{Hausdorff fields}
Let   $H$ be a {\it Hausdorff field}\/:
a subfield of $\mathcal C$. 
Then the partial ordering of $\c$ restricts to a total ordering on $H$ which makes~$H$ into an ordered field.
The ordered field $H$ has a convex subring~${\mathcal{O}:=\{f\in H:f\preceq 1\}}$,
which is a valuation ring of $H$, and we
consider~$H$ accordingly as a valued ordered field. 
Moreover, $H[\imag]$ is a subfield of $\c[\imag]$, and~$\mathcal{O}+\mathcal{O}\imag = \big\{f\in H[\imag]: {f\preceq 1}\big\}$ is the unique valuation ring of $H[\imag]$ whose intersection with $H$ is~$\mathcal{O}$. In this way we consider~$H[\imag]$ as a valued field extension of $H$. 
The asymptotic relations~$\preceq$,~$\prec$,~$\sim$ on~$\c[\imag]$
restrict to the asymptotic relations $\preceq$, $\prec$, $\sim$ on $H[\imag]$ that~$H[\imag]$ has as a valued field (cf.~[ADH, (3.1.1)]; likewise with $H$ in place of $H[\imag]$.

\subsection*{Hardy fields}
Let $H$ be a {\it Hardy field}\/:  a   differential subfield of~$\Gi$.   Then $H$ is a Hausdorff field, and
we consider~$H$  as an ordered valued differential field with ordering and valuation as above.
Any Hardy field is a pre-$H$-field; if it contains $\R$, it is is an $H$-field.
We   equip the differential subfield $H[\imag]$  of~$\Calinf[\imag]$ with the unique valuation ring 
lying over that of~$H$. Then $H[\imag]$ is a pre-$\d$-valued field of $H$-type with small derivation, and
if~$H\supseteq\R$, then $H[\imag]$ is $\d$-valued   with constant field $\C$.

Recall that $H$ is said to be {\it maximal}\/ if it has no proper Hardy field extension, and that every Hardy field has a 
maximal Hardy field extension. The intersection $\Ex(H)$ of all maximal Hardy field extensions of $H$ is a Hardy field extension of $H$,
called the {\it perfect hull} of $H$, and if $\Ex(H)=H$, then $H$ is said to be {\it perfect.}\/ We also say that~$H$ is
{\it $\d$-maximal}\/ if 
it has no proper $\d$-algebraic Hardy field extension.   Zorn yields a
$\d$-maximal $\d$-algebraic Hardy field extension of $H$, hence the intersection $\Dx(H)$ of all $\d$-maximal Hardy fields containing $H$  is a $\d$-algebraic Hardy field extension of~$H$, called the {\it $\d$-perfect hull}\/ of $H$.  We  call   $H$    {\it $\d$-perfect}\/ if~$\Dx(H)=H$. 
If $H$ is $\d$-perfect, then $H\supseteq\R$ and $H$ is a Liouville closed $H$-field, by~\cite[remarks after Proposition~4.2]{ADH5}.
We have $\Dx(H)\subseteq\Ex(H)$; indeed, by \cite[Lemma~4.1]{ADH5}:

\begin{lemma}\label{lem:Dx Ex}
$\Dx(H)=\big\{f\in\Ex(H):\text{$f$ is $\d$-algebraic over $H$}\big\}$.
\end{lemma}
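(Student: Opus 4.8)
The plan is to prove the two inclusions separately, using only that $\Dx(H)$ is a $\d$-algebraic Hardy field extension of $H$ (recorded above), that every Hardy field embeds into a maximal one, and the transitivity of $\d$-algebraicity. Throughout, write the right-hand side as the set of $f\in\Ex(H)$ that are $\d$-algebraic over $H$.

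For the inclusion $\subseteq$, I would first observe that every maximal Hardy field is in particular $\d$-maximal: having no proper Hardy field extension a fortiori rules out proper $\d$-algebraic ones. Hence every maximal Hardy field extension of $H$ already occurs among the $\d$-maximal Hardy fields containing $H$, so intersecting over the (possibly larger) latter family yields $\Dx(H)\subseteq\Ex(H)$. Since $\Dx(H)$ is moreover $\d$-algebraic over $H$, every $f\in\Dx(H)$ lies in $\Ex(H)$ and is $\d$-algebraic over $H$, which is exactly the inclusion $\subseteq$.

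For the reverse inclusion, let $f\in\Ex(H)$ be $\d$-algebraic over $H$, and let $N$ be any $\d$-maximal Hardy field containing $H$; I must show $f\in N$. By Zorn, $N$ extends to a maximal Hardy field $M$, and since $H\subseteq N\subseteq M$ this $M$ is a maximal Hardy field extension of $H$, whence $f\in\Ex(H)\subseteq M$. Now the differential subfield $N\langle f\rangle$ of $M$ generated by $N$ and $f$ is again a Hardy field (a differential subfield of a Hardy field is a Hardy field), and it is $\d$-algebraic over $N$, because $f$ is $\d$-algebraic over $H$, hence over $N$, and the elements of $M$ that are $\d$-algebraic over $N$ form a differential subfield containing $N$ and $f$. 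Thus $N\langle f\rangle$ is a $\d$-algebraic Hardy field extension of the $\d$-maximal field $N$, forcing $N\langle f\rangle=N$ and so $f\in N$. As $N$ was an arbitrary $\d$-maximal Hardy field containing $H$, we conclude $f\in\Dx(H)$.

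The argument is largely bookkeeping with the definitions; the one step carrying real content is the embedding trick in the reverse inclusion. We cannot compare $f$ with a given $\d$-maximal $N$ directly, so we pass to a maximal overfield $M\supseteq N$ in order to locate $f$ (via $f\in\Ex(H)$), and only then feed $N\langle f\rangle$ back into the $\d$-maximality of $N$. I expect no genuine obstacle beyond verifying that $N\langle f\rangle$ is a Hardy field that is $\d$-algebraic over $N$, which is routine differential algebra.
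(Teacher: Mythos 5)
Your proof is correct: the forward inclusion follows since every maximal Hardy field is $\d$-maximal and $\Dx(H)$ is $\d$-algebraic over $H$, and the reverse inclusion via extending a given $\d$-maximal $N\supseteq H$ to a maximal $M$, locating $f$ in $M$, and applying $\d$-maximality of $N$ to $N\langle f\rangle$ is exactly the expected argument. The paper itself gives no proof here, deferring to [ADH5, Lemma 4.1], but your reasoning is the standard one and all the supporting facts you invoke (existence of maximal extensions, that a differential subfield of a Hardy field is a Hardy field, that the $\d$-algebraic elements over $N$ in $M$ form a differential subfield) are available in the paper's preliminaries.
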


\noindent
A {\em smooth Hardy field}\/ is a Hardy field $H\subseteq \Ginf$, and an {\em analytic Hardy field\/}   is a Hardy field~${H\subseteq \Gom}$.
Instead of smooth and analytic Hardy fields we also speak of $\Ginf$- and $\Gom$-Hardy fields. Let $r\in\{\infty,\omega\}$.
A {\it $\c^r$-maximal  Hardy field}\/ is a $\c^r$-Hardy field which has no proper $\c^r$-Hardy field extensions.
If $H\subseteq\c^r$, then we let~$\Ex^r(H)$ be the intersection of all $\c^r$-maximal Hardy fields containing $H$.
Thus using the notation from the introduction, our main objects of interest in this paper are $\Ex=\Ex(\Q)$, $\Ex^\infty=\Ex^\infty(\Q)$, and~$\Ex^\omega=\Ex^\omega(\Q)$.
Instead of ``$\c^\infty$-maximal'' and ``$\c^r$-maximal'' we also write ``maximal smooth'' and ``maximal analytic'', respectively.
The following is~\cite[Co\-rol\-la\-ry~7.8]{ADH2}:

\begin{prop}\label{prop:Hardy field ext smooth}
Suppose $H$ is smooth. Then every $\d$-algebraic Hardy field extension of $H$ is also smooth; in particular, $\Dx(H)$ is smooth.  Likewise with ``smooth'' replaced by ``analytic''.
\end{prop}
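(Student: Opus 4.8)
The plan is to reduce to a single germ and then raise its differentiability by solving the defining equation for the top derivative. Since smoothness of a Hardy field means that each of its germs lies in $\Ginf$, and every germ of $E$ is $\d$-algebraic over $H$ (as $E/H$ is $\d$-algebraic), it suffices to show: if $y\in E$ is $\d$-algebraic over $H$ and $H\subseteq\Ginf$, then $y\in\Ginf$; the ``in particular'' is then immediate, as $\Dx(H)$ is by definition a $\d$-algebraic Hardy field extension of $H$. So fix such a $y$ and choose a nonzero $P\in H[Y_0,\dots,Y_n]$ of least order $n$, and among those of least degree in $Y_n$, with $P(y,y',\dots,y^{(n)})=0$. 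Then $P$ has degree $\ge 1$ in $Y_n$, so its separant $S:=\partial P/\partial Y_n$ is a nonzero differential polynomial of smaller rank; by minimality $s:=S(y,\dots,y^{(n)})\ne 0$ in $E$. As $s$ is a nonzero element of the Hardy field $E$, its germ is eventually nonzero.

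Next I would pass to representatives on a common half-line $(a,+\infty)$. Using $H\subseteq\Ginf$, choose $\Ginf$-representatives of the finitely many coefficients of $P$, producing a $\Ginf$-function $\mathbf P(x,Y_0,\dots,Y_n)$, polynomial in the $Y_i$, with $\partial\mathbf P/\partial Y_n=\mathbf S$ representing $s$. Since $y\in\Gi$, the germ $y$ has a $\Gn$-representative, so on a suitable $(a,+\infty)$ the functions $y,y',\dots,y^{(n)}$ are continuous, satisfy $\mathbf P(x,y,\dots,y^{(n)})=0$, and $\mathbf S(x,y,\dots,y^{(n)})\ne 0$. Along the curve $t\mapsto\big(t,y(t),\dots,y^{(n)}(t)\big)$ we have $\partial\mathbf P/\partial Y_n\ne 0$, so the implicit function theorem lets me solve $\mathbf P=0$ for the last coordinate: near each point of this curve there is a $\Ginf$-function $F$ with
$$y^{(n)}(t)\ =\ F\big(t,y(t),\dots,y^{(n-1)}(t)\big).$$

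Finally I would bootstrap on this fixed half-line, where the displayed identity holds throughout. If $y,\dots,y^{(n-1)}$ are $\mathcal{C}^r$ on $(a,+\infty)$ with $r\ge 1$, then the right-hand side, being locally a composite of the $\Ginf$-map $F$ with these $\mathcal{C}^r$-functions, is $\mathcal{C}^r$ on $(a,+\infty)$; hence $y^{(n)}\in\mathcal{C}^r$ and so $y\in\mathcal{C}^{n+r}$, and iterating yields $y\in\Ginf$ on $(a,+\infty)$, i.e.\ a single smooth representative. For the analytic statement the same steps apply verbatim with $\Gom$ in place of $\Ginf$: the analytic implicit function theorem yields an analytic $F$, and then $y$ is a $\Gn$-solution of the analytic equation $y^{(n)}=F(x,y,\dots,y^{(n-1)})$. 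Rewriting this as a first-order system $\mathbf z'=G(x,\mathbf z)$ with $G$ analytic (hence locally Lipschitz), the Cauchy existence theorem produces, near each $t_0$, an analytic solution with the same initial data as $y$, and Picard--Lindel\"of uniqueness forces it to coincide with $y$; thus $y$ is analytic near each $t_0$, hence on the half-line. The main obstacle is precisely this last regularity upgrade in the analytic case---passing from a merely $\Gn$ (or $\Ginf$) germ to an analytic one---since analyticity, unlike smoothness, cannot be reached by bootstrapping and must instead be extracted from uniqueness for the analytic initial value problem.
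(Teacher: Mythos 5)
Your proof is correct. The paper itself offers no argument for this proposition---it is imported verbatim as \cite[Corollary~7.8]{ADH2}---but your route (minimal annihilator of least order and then least degree in $Y_n$, nonvanishing of the separant by minimality, implicit function theorem to solve for $y^{(n)}$, bootstrap on a \emph{fixed} half-line for $\Ginf$, and Cauchy existence plus Picard--Lindel\"of uniqueness for $\Gom$) is exactly the standard argument underlying the cited result, and you correctly flag the two points where care is needed: the regularity upgrade must happen on a single half-line rather than on shrinking ones, and analyticity cannot be bootstrapped but must come from uniqueness for the analytic initial value problem.
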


\noindent
Let now $H$ be a $\Ginf$-Hardy field.  Then by Proposition~\ref{prop:Hardy field ext smooth},
$H$ is $\d$-maximal iff~$H$ has no proper $\d$-algebraic $\Ginf$-Hardy field extension; thus  every $\Ginf$-maximal Hardy field is $\d$-maximal, and $H$ has a $\d$-maximal $\d$-algebraic $\Ginf$-Hardy field extension.   The same remarks apply with $\omega$ in place of $\infty$. 

\subsection*{The main result of \cite{ADH2}} 
A {\it closed $H$-field}\/ (or {\it $H$-closed field}\/) is a Liouville closed, $\upo$-free,   newtonian $H$-field. 
By [ADH], the closed $H$-fields with small derivation are precisely the models of the elementary theory of $\T$ as an ordered valued differential field.
Hence by the next theorem, every $\d$-maximal Hardy field as an ordered valued differential field is elementarily equivalent to $\T$. 

\begin{theorem}\label{thm:maxhardymainthm}
For a Hardy field $H$, the following are equivalent:
\begin{enumerate}
\item[\textup{(i)}] $H$ is a $\d$-maximal Hardy field;
\item[\textup{(ii)}] $H\supseteq \R$ and $H$ is a closed $H$-field;
\item[\textup{(iii)}] $H\supseteq \R$ and $H$ is a Liouville closed $H$-field having \textup{DIVP}.
\end{enumerate}
\end{theorem}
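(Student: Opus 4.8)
The plan is to establish the cycle \textup{(i)}$\Rightarrow$\textup{(iii)}$\Rightarrow$\textup{(ii)}$\Rightarrow$\textup{(i)}, using the Hardy field extension theorem quoted in the introduction as the sole analytic input and the abstract theory of $H$-fields from \textup{[ADH]} for the algebraic equivalences. For \textup{(i)}$\Rightarrow$\textup{(iii)}, assume $H$ is $\d$-maximal. First $H\supseteq\R$: for $c\in\R$ the differential subfield $H(c)$ of $\Gi$ is a Hardy field and $c$ is $\d$-algebraic over $H$ (it satisfies $Y'=0$), so $\d$-maximality gives $c\in H$; thus $H$ is an $H$-field. Liouville closedness is obtained the same way from the classical extension results recalled in the introduction: the real closure of $H$, and the antiderivatives, exponentials, and logarithms of elements of $H$, all lie in Hardy field extensions of $H$ and are $\d$-algebraic over $H$ (being algebraic, or satisfying $y'=a$, respectively $y^\dagger=a$ with $a\in H$), so $\d$-maximality places them inside $H$. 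Finally, for \textup{DIVP}, let $P\in H[Y_0,\dots,Y_n]$ and $f<g$ in $H$ satisfy $P(f,\dots,f^{(n)})<0<P(g,\dots,g^{(n)})$. The quoted theorem yields $y$ in a Hardy field extension of $H$ with $f<y<g$ and $P(y,\dots,y^{(n)})=0$; since $P\ne0$ this $y$ is $\d$-algebraic over $H$, so $H\langle y\rangle$ is a $\d$-algebraic Hardy field extension and $\d$-maximality forces $y\in H$. Hence $H$ has \textup{DIVP}.

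The step \textup{(iii)}$\Rightarrow$\textup{(ii)} is the purely algebraic heart and is where I would invoke \textup{[ADH]}: for a Liouville closed $H$-field $H\supseteq\R$, \textup{DIVP} forces $H$ to be $\upo$-free and newtonian. Heuristically, newtonianity amounts to solvability of the quasilinear (Newton degree one) equations, which \textup{DIVP} supplies by a sign-change argument, while a failure of $\upo$-freeness would produce a differential polynomial changing sign on $H$ yet having no zero in $H$, against \textup{DIVP}; the precise statement belongs to the structure theory of \textup{[ADH]}. This makes $H$ a closed $H$-field.

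For \textup{(ii)}$\Rightarrow$\textup{(i)}, suppose $H$ is a closed $H$-field and, for contradiction, that $E\supsetneq H$ is a $\d$-algebraic Hardy field extension; pick $y\in E\setminus H$. The constant field of any Hardy field consists of germs with vanishing derivative, hence of real constants, so as $\R\subseteq H\subseteq E$ both $H$ and the intermediate $H$-field $H\langle y\rangle$ have constant field exactly $\R$. Thus $H\langle y\rangle$ is a proper $\d$-algebraic $H$-field extension of $H$ with no new constants. But an $\upo$-free, newtonian, Liouville closed $H$-field admits no such extension, by the $\d$-algebraic maximality of closed $H$-fields relative to their constant field \textup{[ADH]}: newtonianity excludes proper immediate $\d$-algebraic extensions, and $\upo$-freeness together with Liouville closedness excludes the remaining ones. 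This contradiction shows $H$ is $\d$-maximal, closing the cycle.

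The one genuinely deep ingredient is the Hardy field extension theorem powering the \textup{DIVP} step: producing a zero of a sign-changing differential polynomial as the germ of an honest function in a Hardy field extension is the hard analytic fact, and I expect it, rather than any of the algebra above, to be the real obstacle. Everything else reduces either to classical closure properties of Hardy fields or to the abstract structure theory of closed $H$-fields; the only point needing care in assembly is the passage between ``Hardy field extension'' and ``abstract $H$-field extension with the same constant field,'' which rests precisely on germs of vanishing derivative being eventually constant.
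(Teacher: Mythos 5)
Your argument is correct, but it is worth pointing out that the paper does not prove this theorem at all: it quotes the equivalence (i)$\Leftrightarrow$(ii) as \cite[Theorem~11.19]{ADH2} and (ii)$\Leftrightarrow$(iii) as \cite[Corollary~1.7]{ADHip}. What you have done is reassemble a proof of the quoted result from the ingredients actually available here, and the assembly is sound: (i)$\Rightarrow$(iii) follows from the classical closure properties of Hardy fields (real closure, integrals, exponentials, logarithms, constants all being $\d$-algebraic germs lying in Hardy field extensions) together with the extension theorem stated in the introduction, which applies to arbitrary Hardy fields and produces a $\d$-algebraic zero that $\d$-maximality then forces into $H$; (iii)$\Rightarrow$(ii) is exactly the content of \cite[Corollary~1.7]{ADHip}, which you correctly flag as a black box rather than something provable by the sign-change heuristic you sketch; and (ii)$\Rightarrow$(i) is an application of [ADH, 16.0.3] (Theorem~\ref{thm:16.0.3} of this paper), made legitimate by your observation that any Hardy field extension of an $H$-field $H\supseteq\R$ is an $H$-field extension with the same constant field $\R$ --- the precise point where ``Hardy field extension'' is converted into ``abstract $H$-field extension with no new constants.'' The one place where your write-up is only a gloss is the claim that $\upo$-freeness plus Liouville closedness plus newtonianity ``excludes the remaining'' $\d$-algebraic extensions; that is the nontrivial content of [ADH, 16.1.1] and cannot be dismissed in a sentence, but since you are citing the result rather than proving it, this is the same level of reliance on the literature that the paper itself adopts. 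Your closing remark that the genuinely deep input is the analytic extension theorem, not the surrounding algebra, matches the paper's own framing.
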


\noindent
Here \cite[Theorem~11.19]{ADH2} is the equivalence (i)~$\Leftrightarrow$~(ii), and (ii)~$\Leftrightarrow$~(iii) is \cite[Corollary~1.7]{ADHip}.
The preceding theorem and Proposition~\ref{prop:Hardy field ext smooth} yield \cite[Corollary~11.20]{ADH2}:

\begin{cor}\label{thm:extend to H-closed}
Each Hardy field $H$ has a $\d$-algebraic $H$-closed Hardy field extension.
If $H$ is a $\Ginf$-Hardy field, then so is any such extension, and likewise with~$\Gom$ in place of $\Ginf$.
\end{cor}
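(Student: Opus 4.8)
The plan is to read off both assertions directly from Theorem~\ref{thm:maxhardymainthm} and Proposition~\ref{prop:Hardy field ext smooth}; essentially all the content already sits in those two results, so the corollary is a matter of assembling them correctly.

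For the existence part I would first invoke the Zorn argument recorded just before Lemma~\ref{lem:Dx Ex}: among the $\d$-algebraic Hardy field extensions of $H$ there is one, say $E$, that is maximal with respect to inclusion, and by transitivity of $\d$-algebraicity such an $E$ admits no proper $\d$-algebraic Hardy field extension whatsoever, i.e.\ $E$ is $\d$-maximal. Applying the implication (i)$\Rightarrow$(ii) of Theorem~\ref{thm:maxhardymainthm} to $E$ then yields $E\supseteq\R$ with $E$ a closed $H$-field, that is, $H$-closed. Since $E$ is $\d$-algebraic over $H$ by construction, it is the desired $\d$-algebraic $H$-closed Hardy field extension.

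For the regularity part I would take \emph{any} $\d$-algebraic $H$-closed Hardy field extension $E$ of $H$ and argue that smoothness (resp.\ analyticity) of $H$ is inherited by $E$. Because $E$ is $\d$-algebraic over $H$, Proposition~\ref{prop:Hardy field ext smooth} applies verbatim: if $H\subseteq\Ginf$ then $E\subseteq\Ginf$, and if $H\subseteq\Gom$ then $E\subseteq\Gom$, using the ``smooth'' and ``analytic'' halves of that proposition respectively. Here the hypothesis ``$H$-closed'' plays no role beyond naming the class of extension we are handed; what drives the conclusion is solely the $\d$-algebraicity of $E$ over $H$.

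I do not expect a genuine obstacle. The deep input is the hard direction (i)$\Rightarrow$(ii) of Theorem~\ref{thm:maxhardymainthm}, proved in \cite{ADH2}, together with the regularity-preservation Proposition~\ref{prop:Hardy field ext smooth}; both are already quoted. The one point to watch is to keep the two ingredients cleanly separated---Theorem~\ref{thm:maxhardymainthm} supplies the \emph{existence} of an $H$-closed extension, and Proposition~\ref{prop:Hardy field ext smooth} supplies the \emph{transfer of regularity}---so that no circularity creeps in between ``is $H$-closed'' and ``is smooth/analytic''.
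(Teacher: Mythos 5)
Your proposal is correct and follows exactly the route the paper intends: the paper derives this corollary by combining Theorem~\ref{thm:maxhardymainthm} (via the Zorn argument for a $\d$-maximal $\d$-algebraic extension recorded before Lemma~\ref{lem:Dx Ex}) with Proposition~\ref{prop:Hardy field ext smooth}. Your observation that the regularity transfer uses only $\d$-algebraicity, not $H$-closedness, is also the right reading of the statement.
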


\subsection*{Logarithmic derivatives}
Let $K$ be a differential field. 
The {\it group of logarithmic derivatives}\/ of $K$ is the additive subgroup~$K^\dagger=\{f^\dagger:f\in K^\times\}$  of~$K$.
If $K$ is algebraically closed or real closed, then $K^\dagger$ is divisible.
Here is  \cite[Lemma~1.2.1]{ADH4}:

\begin{lemma}\label{lem:Kdagger alg closure}
Suppose $K^\dagger$ is divisible,  $L$ is a differential field extension of~$K$ such that~${L^\dagger\cap K=K^\dagger}$, and
$M$ is a differential field extension of $L$ and algebraic over~$L$. Then~$M^\dagger\cap K=K^\dagger$.
\end{lemma}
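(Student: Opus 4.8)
The plan is to prove the nontrivial inclusion $M^\dagger\cap K\subseteq K^\dagger$; the reverse inclusion $K^\dagger\subseteq M^\dagger\cap K$ is immediate from $K\subseteq M$. So I would fix $a\in M^\dagger\cap K$, say $a=y^\dagger$ with $y\in M^\times$, and aim to show $a\in K^\dagger$. The key idea is to pass to the conjugates of $y$ over $L$ and take their product: if all these conjugates share the logarithmic derivative $a$, then the product of the $d:=[L(y):L]$ conjugates lies in $L^\times$ and has logarithmic derivative $da$, so that $da\in L^\dagger$. Since $a\in K$, this yields $da\in L^\dagger\cap K=K^\dagger$, and the divisibility of $K^\dagger$ will let me recover $a$ itself.

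To carry this out I would first fix an algebraic closure $L^{\alg}$ of $L$ and extend the derivation of $L$ to it; since the characteristic is $0$, this extension is unique, and every $L$-embedding of an algebraic extension of $L$ into $L^{\alg}$ commutes with the derivation (conjugating the derivation by such a map gives another derivation extending the one on $L$, and uniqueness forces the two to agree). As $M$ is algebraic over $L$, its derivation is likewise the unique extension of that on $L$, so I may identify $y$ with its image under an $L$-embedding $M\to L^{\alg}$ without changing $y^\dagger$. Let $y=y_1,\dots,y_d$ be the conjugates of $y$ over $L$ in $L^{\alg}$, each of the form $y_i=\sigma_i(y)$ for an $L$-embedding $\sigma_i$ of $L(y)$. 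Since $\sigma_i$ commutes with the derivation and fixes $a\in L$, I get $y_i^\dagger=\sigma_i(y)^\dagger=\sigma_i(y^\dagger)=\sigma_i(a)=a$ for each $i$.

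Now set $w:=\prod_{i=1}^d y_i$, which up to sign is the constant coefficient of the minimal polynomial of $y$ over $L$; thus $w\in L^\times$ (nonzero as $y\neq 0$), and additivity of the logarithmic derivative gives $w^\dagger=\sum_{i=1}^d y_i^\dagger=da$. Hence $da\in L^\dagger$, and since $a\in K$ forces $da\in K$, I obtain $da\in L^\dagger\cap K=K^\dagger$. Finally, divisibility of $K^\dagger$ provides $b\in K^\dagger$ with $db=da$; as $K$ has characteristic $0$, this forces $a=b\in K^\dagger$, completing the argument. The only real bookkeeping to watch is that the derivation extends uniquely to $L^{\alg}$ and commutes with the conjugation maps $\sigma_i$ — a standard feature of separable (here, characteristic-zero) algebraic extensions — so there is no genuine obstacle beyond making this step explicit and confirming that $\prod_i y_i$ indeed lands back in $L^\times$.
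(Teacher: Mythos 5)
Your argument is correct. Note that the paper itself contains no proof of this lemma: it is quoted verbatim from \cite[Lemma~1.2.1]{ADH4}, so there is nothing internal to compare against. Your route --- writing $a=y^\dagger$ with $a\in K$, passing to the conjugates $y_1,\dots,y_d$ of $y$ over $L$ in an algebraic closure, using uniqueness of the extension of the derivation in characteristic zero to get $y_i^\dagger=a$ for each $i$, concluding $da=\big(\prod_i y_i\big)^\dagger\in L^\dagger\cap K=K^\dagger$ via the norm, and then dividing by $d$ using divisibility of $K^\dagger$ --- is the standard norm argument for such statements, and every step checks out (including the observations that $\prod_i y_i$ is $\pm$ the constant coefficient of the minimal polynomial, hence a nonzero element of $L$, and that $dc=da$ forces $c=a$ in characteristic zero).
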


\noindent
Suppose that $H$ is a real closed asymptotic field whose valuation ring $\mathcal{O}$ is convex with respect to the ordering of $H$, 
and $K:=H[\imag]$. Then $\mathcal{O}_K=\mathcal{O}+\mathcal{O}\imag$ is the unique valuation ring of $K$ with $\mathcal O_K\cap H=\mathcal O$
[ADH, 3.5.15].
Equipped with this valuation ring, $K$ is an asymptotic field
extension of $H$ [ADH,  9.5.3], and if $H$ is $H$-asymptotic, then so is $K$.
With $\wr(a,b):=ab'-a'b$ (the wronskian of $a,b$), set
$$S\ :=\ \big\{y\in K:\, |y|=1\big\}, \qquad
W\ :=\ \big\{\!\wr(a,b):\, a,b\in H,\ a^2+b^2=1\big\}.$$ 
Then $S$ is a subgroup of $\mathcal{O}_K^\times$ with $S^\dagger = W\imag$ and 
$K^\dagger=H^\dagger\oplus W\imag$ by \cite[Lemma~1.2.4]{ADH4}. Recall from [ADH, 14.2] that for 
asymptotic fields $E$ (such as $H$, $K$) we defined $$\I(E)\ :=\ \{f\in E:\ f\preceq g' \text{ for some }g\preceq 1 \text{ in }E\}.$$  
Since~$\der\mathcal O\subseteq \I(H)$,  
we also have $W\subseteq \I(H)$, and thus: $W = \I(H) \ \Longleftrightarrow\  \I(H)\imag\subseteq K^\dagger$. 
Moreover, by \cite[Lemma~1.2.13]{ADH4} we have 
$W=\I(H)\subseteq H^\dagger \ \Longleftrightarrow\ 
\I(K)\subseteq K^\dagger$.

\begin{lemma}\label{lem:logder real closed}
Suppose $H$ is $H$-asymptotic with asymptotic integration, and $K$ is $1$-linearly newtonian. Then $K^\dagger=H^\dagger\oplus\I(H)\imag$.
Moreover, if $F$ is   a real closed asymptotic extension of $H$ whose valuation ring is convex,  
then $$F[\imag]^\dagger\cap K\ =\ (F^\dagger\cap H)\oplus\I(H)\imag.$$ 
\end{lemma}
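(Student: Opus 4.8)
The plan is to handle the two equalities separately, leaning on the facts collected just before the statement. For the first equality, recall that $K^\dagger=H^\dagger\oplus W\imag$ with $W\subseteq\I(H)$, and that by \cite[Lemma~1.2.13]{ADH4} one has $W=\I(H)\subseteq H^\dagger$ iff $\I(K)\subseteq K^\dagger$. So it suffices to prove $\I(K)\subseteq K^\dagger$: this yields $W=\I(H)$ and hence $K^\dagger=H^\dagger\oplus W\imag=H^\dagger\oplus\I(H)\imag$. To obtain $\I(K)\subseteq K^\dagger$ I would use that $K=H[\imag]$ is $H$-asymptotic with asymptotic integration (its asymptotic couple agrees with that of $H$) and is $1$-linearly newtonian. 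Given $f\in\I(K)$, solving $y^\dagger=f$ with $y\in K^\times$ amounts to finding $z\prec 1$ with $z'-fz=f$; since $f\prec 1$ this is a quasilinear first-order equation, so it has a solution by $1$-linear newtonianity, and then $y=1+z$ gives $y^\dagger=f$, i.e.\ $f\in K^\dagger$.

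For the second equality, put $L:=F[\imag]$. As $F$ is real closed asymptotic with convex valuation ring, \cite[Lemma~1.2.4]{ADH4} applies to $F$ and gives $L^\dagger=F^\dagger\oplus W_F\imag$ with $W_F:=\{\wr(a,b):a,b\in F,\ a^2+b^2=1\}\subseteq\I(F)$. Intersecting with $K=H\oplus H\imag\subseteq F\oplus F\imag=L$ and comparing real and imaginary parts gives
$$L^\dagger\cap K\ =\ (F^\dagger\cap H)\oplus(W_F\cap H)\imag,$$
so the whole matter reduces to the identity $W_F\cap H=\I(H)$. The inclusion $\I(H)\subseteq W_F\cap H$ is immediate from the first part: $\I(H)=W\subseteq W_F$ because $H\subseteq F$, and $\I(H)\subseteq H$. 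For the reverse inclusion we have $W_F\cap H\subseteq\I(F)\cap H$, and it remains to establish $\I(F)\cap H=\I(H)$.

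This last identity is the crux, and it is exactly where the hypothesis that $H$ has asymptotic integration enters. The inclusion $\I(H)\subseteq\I(F)\cap H$ is trivial; for the converse one takes $f\in H\cap\I(F)$, uses asymptotic integration of $H$ to find $g\in H^\times$ with $g'\asymp f$, and must exclude the possibility $g\succ 1$. Passing to the asymptotic couple of $F$ (which extends that of $H$), this amounts to showing that no positive element of $\Gamma_F$ can produce a value below the cut $\inf_{\gamma>0}\bigl(\gamma+\psi_H(\gamma)\bigr)$ determined by $H$; asymptotic integration of $H$ says precisely that this cut has no gap, and one checks it therefore cannot be undercut in any asymptotic extension. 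Granting $\I(F)\cap H=\I(H)$, we get $W_F\cap H=\I(H)$ and hence $L^\dagger\cap K=(F^\dagger\cap H)\oplus\I(H)\imag$.

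I expect the main obstacle to be this asymptotic-couple computation for $\I(F)\cap H=\I(H)$ rather than the first part: the reduction and the newtonian solvability of $y^\dagger=f$ are essentially formal, whereas controlling how $\I$ behaves under the (possibly transcendental) extension $F\supseteq H$ is the delicate step, and it is there that asymptotic integration is indispensable.
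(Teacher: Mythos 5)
Your overall architecture matches the paper's: the paper obtains the first equality by citing [ADH4, Corollary~1.2.14] for $\I(K)\subseteq K^\dagger$ and then invoking the remarks preceding the lemma (exactly your reduction via $W$), and gets the second equality by citing [ADH4, Corollary~1.2.15]. Your self-contained treatment of the second equality is sound: decomposing $F[\imag]^\dagger=F^\dagger\oplus W_F\imag$, separating real and imaginary parts, and reducing to $W_F\cap H=\I(H)$ via $\I(F)\cap H=\I(H)$ all check out, and the asymptotic-couple step you worry about is fine --- with asymptotic integration every $\alpha\in v(H^\times)$ equals $\gamma+\psi_H(\gamma)$ for some $\gamma\ne 0$, and the case $\gamma<0$ is excluded by the inequality $\psi_F(\gamma)<\beta+\psi_F(\beta)$ valid in any asymptotic couple for $\beta>0$. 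So that half, which you flag as the delicate one, is actually the safe one.

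The genuine gap is in your proof of $\I(K)\subseteq K^\dagger$. You rewrite $y^\dagger=f$ as $z'-fz=f$ with $y=1+z$ and invoke $1$-linear newtonianity to get ``a solution.'' But $z=-1$ is already a zero of $Z'-fZ-f$ lying in $\mathcal O_K$, so the bare newtonianity statement (every quasilinear linear differential polynomial of order $\le 1$ has a zero in $\mathcal O_K$) is satisfied vacuously here and yields nothing: the zero it hands you may well be $-1$, in which case $y=0$ has no logarithmic derivative. What you actually need is a zero $z\prec 1$, and for that you must (a) verify $\ndeg_{\prec 1}(Z'-fZ-f)=1$, which is where asymptotic integration genuinely enters this half of the argument (use it to produce $h\prec 1$ with $h'\asymp f$ and test the dominant degree after multiplicative conjugation by a monomial $\asymp h$), and (b) appeal to the refined consequence of newtonianity that a quasilinear $P$ with $\ndeg_{\prec\mathfrak m}P=1$ has a zero $\prec\mathfrak m$, not merely a zero in $\mathcal O_K$. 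So your closing assessment is inverted: the ``essentially formal'' step is the one that fails as written, while the $\I(F)\cap H=\I(H)$ computation goes through.
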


\begin{proof}
By \cite[Corollary~1.2.14]{ADH4} we have $\I(K)\subseteq K^\dagger$ and thus $W=\I(H)$ and  $K^\dagger=H^\dagger\oplus\I(H)\imag$ by the remarks before the lemma. The second part of the lemma follows from this and \cite[Corollary~1.2.15]{ADH4}.
\end{proof}
 
\subsection*{The universal exponential extension}
{\it In this subsection $K$ is a differential field with algebraically closed constant field $C$ and divisible group $K^\dagger$
 of logarithmic derivatives.}\/
An {\it exponential extension}\/ of $K$ is   a differential ring extension~$R$ of~$K$ such that~$R=K[E]$ for some $E\subseteq R^\times$ with~${E^\dagger\subseteq K}$.
By~\cite[Section~2.2, especially Corollary~2.2.11 and remarks preceding it]{ADH4},
there is an exponential extension $\Univ$ of $K$ with~$C_{\Univ}=C$ such that every exponential extension $R$ of $K$ with $C_R=C$
embeds into~$\Univ$ over $K$; any two such exponential extensions of $K$ are isomorphic over $K$.
We call $\Univ$ the {\it universal exponential extension}\/ of $K$,   denoted by $\Univ_K$ if we want to stress the dependence on~$K$.
By its construction in \cite[Section~2.2]{ADH4}, $\Univ$ is an integral domain,
and $(\Univ^\times)^\dagger =  K$ by \cite[remarks before Example~2.2.4]{ADH4}.
 We denote the differential fraction field of $\Univ$ by $\Omega$ (or $\Omega_K$); then   $C_\Omega=C$ by \cite[remark before Lemma~2.2.7]{ADH4}. 
If $L$ is a differential field extension of $K$ such that $C_L$ is algebraically closed and $L^\dagger$ is divisible, then
by \cite[Lemma~2.2.12]{ADH4} the natural inclusion~$K\to L$ extends to an embedding $\Univ_K\to\Univ_L$ of differential rings.

Suppose that $K$ is $\d$-valued of $H$-type with $\Gamma\neq\{0\}$ and with small derivation. 
By \cite[Lemma~2.5.1]{ADH4}, the  valuation of $K$ extends to a valuation on~$\Omega$ that makes~$\Omega$ a $\d$-valued extension of $K$ of $H$-type with small derivation, called a {\it spectral extension}\/
of the valuation of $K$ to $\Omega$.
By \cite[Lemma~2.5.3 and Corollary~2.5.5]{ADH4} we have:

\begin{lemma} \label{lem:as cpl Omega}
If $K$ is $\upl$-free and $\I(K)\subseteq K^\dagger$, then the $H$-asymptotic couple~$(\Gamma_\Omega,\psi_\Omega)$ of $\Omega$ equipped with a spectral
extension of the valuation of $K$ is closed  with $\Psi_\Omega:=\big\{ \psi_\Omega(\gamma):\gamma\in\Gamma_\Omega^{\neq}\big\}\subseteq\Gamma$.
\end{lemma}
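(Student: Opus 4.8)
The plan is to deduce both assertions from an explicit description of $\Univ$ and of its spectral valuation. Up to the choice of a complement $\Lambda$ of $K^\dagger$ in the additive group of $K$ (available since $K^\dagger$ is divisible, hence a direct summand), one may realize $\Univ$ as the group ring $K[\ev(\lambda):\lambda\in\Lambda]$ in which $\ev(\lambda)^\dagger=\lambda$ and $\ev(\lambda)\ev(\mu)=\ev(\lambda+\mu)$; its units are the trivial ones $c\,\ev(\lambda)$ with $c\in K^\times$, in accordance with $(\Univ^\times)^\dagger=K$. By \cite[Lemma~2.5.1]{ADH4} the spectral valuation sends $g=\sum_\lambda c_\lambda\,\ev(\lambda)\in\Univ^{\neq}$ to the value of its dominant monomial, and $\Omega$ is $\d$-valued of $H$-type with small derivation. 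I would organize everything around the exponential monomials.

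The inclusion $\Psi_\Omega\subseteq\Gamma$ then comes out almost for free. Since the spectral valuation picks out the dominant monomial, $V:=\{v(\ev(\lambda)):\lambda\in\Lambda\}$ is a subgroup of $\Gamma_\Omega$ and $v(\Univ^{\neq})=\Gamma+V$; as $\Omega=\Frac(\Univ)$, this forces $\Gamma_\Omega=\Gamma+V$, so every $\gamma\in\Gamma_\Omega^{\neq}$ is realized by a single monomial, $\gamma=v(c\,\ev(\lambda))$ with $c\in K^\times$ and $\lambda\in\Lambda$. Because $\Omega$ is $\d$-valued, $\psi_\Omega(\gamma)=v(h^\dagger)$ does not depend on the choice of $h$ with $v(h)=\gamma$, so I may evaluate it on this monomial: $\psi_\Omega(\gamma)=v\big((c\,\ev(\lambda))^\dagger\big)=v(c^\dagger+\lambda)$. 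Here $c^\dagger+\lambda\in K$, and it is nonzero (were it zero, $c\,\ev(\lambda)$ would be a constant and $\gamma$ would vanish), so $\psi_\Omega(\gamma)\in\Gamma$, as desired.

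For closedness I would verify the defining conditions of a closed $H$-asymptotic couple, the substantive one being asymptotic integration: surjectivity of $\gamma\mapsto\gamma+\psi_\Omega(\gamma)$ on $\Gamma_\Omega$. The driving fact is again $(\Univ^\times)^\dagger=K$, which says that every $a\in K$ is the logarithmic derivative of a monomial, so that the exponential generators furnish the derivatives $v\big((c\,\ev(\lambda))'\big)=\gamma+\psi_\Omega(\gamma)$ needed to hit the prescribed targets in $\Gamma_\Omega$. The two hypotheses are exactly what make this surjectivity complete: $\upl$-freeness of $K$ excludes the gap/$\upl$-type obstruction that would otherwise block it, while $\I(K)\subseteq K^\dagger$ guarantees that the small-scale integrals one needs are already realized as logarithmic derivatives inside $K$, so that no integration defect survives in $\Omega$. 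The remaining conditions defining ``closed'' then follow from asymptotic integration together with the inclusion $\Psi_\Omega\subseteq\Gamma$ established above; this is the content of \cite[Lemma~2.5.3 and Corollary~2.5.5]{ADH4}.

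The step I expect to be the main obstacle is precisely this passage from $\upl$-freeness of $K$ to asymptotic integration of $\Omega$: one must check that adjoining exponentials of all elements of $K$ neither reintroduces a gap in the value group nor disturbs the structure of $\psi_\Omega$, and that the small-derivation, $H$-type extension of the valuation behaves as required. By contrast, once the group-ring description and the dominant-monomial nature of the spectral valuation are in hand, the inclusion $\Psi_\Omega\subseteq\Gamma$ is a short computation, as above.
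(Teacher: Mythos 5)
The paper does not actually prove this lemma from scratch: it is stated as a direct quotation of \cite[Lemma~2.5.3 and Corollary~2.5.5]{ADH4}, following the setup of spectral extensions from \cite[Lemma~2.5.1]{ADH4}. Measured against that, your proposal does strictly more work on the first assertion and no more work on the second, and it is on the second that the gap lies. Your computation of $\Psi_\Omega\subseteq\Gamma$ is plausible and in the right spirit: granting the group-ring realization $\Univ=K[\ev(\lambda):\lambda\in\Lambda]$ and the fact that a spectral valuation evaluates an element of $\Univ^{\neq}$ at its dominant monomial, one does get $\Gamma_\Omega=\Gamma+V$, and then $\psi_\Omega(\gamma)=v(c^\dagger+\lambda)\in\Gamma$ with the nonvanishing of $c^\dagger+\lambda$ correctly justified by $C_\Omega=C$. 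But note two caveats: the dominant-monomial description (in particular, that no cancellation among terms of equal value can lower the valuation) is itself part of what \cite[Section~2.5]{ADH4} establishes and is not free; and your argument for $\Psi_\Omega\subseteq\Gamma$ nowhere uses either hypothesis ($\upl$-freeness or $\I(K)\subseteq K^\dagger$), which should prompt you to check where in \cite{ADH4} those hypotheses actually enter.

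The genuine gap is the closedness of $(\Gamma_\Omega,\psi_\Omega)$. You identify asymptotic integration as the substantive condition, assert that $(\Univ^\times)^\dagger=K$ together with the two hypotheses ``is exactly what makes this surjectivity complete,'' explicitly flag this passage as the main obstacle you have not overcome, and then close by saying the remaining conditions ``follow from \cite[Lemma~2.5.3 and Corollary~2.5.5]{ADH4}'' --- which is precisely the citation that constitutes the paper's entire proof of the lemma, including the part you are trying to establish. As it stands, the heuristic that every $a\in K$ is a logarithmic derivative of a monomial does not by itself show that $\gamma\mapsto\gamma+\psi_\Omega(\gamma)$ is surjective onto $\Gamma_\Omega$ (one must hit targets in $\Gamma+V$, not just those coming from single monomials over $K$, and one must rule out a gap in $(\Gamma_\Omega,\psi_\Omega)$), nor does it address the other requirements in the definition of a closed $H$-asymptotic couple. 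So the proposal is an honest partial reconstruction, but the half of the statement that carries the content is left to the very reference the lemma is quoted from.
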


\subsection*{Some facts about complexified Hardy fields}

{\it In this subsection, $H$ is a Hardy field.}\/
The $H$-asymptotic field extension $K:=H[\imag]$ of~$H$ is a differential subring of~$\Calinf[\imag]$.  
The next proposition, \cite[Proposition~6.11]{ADH5}, considers the condition~${\I(K)\subseteq K^\dagger}$ in this setting:

\begin{prop}\label{prop:cos sin infinitesimal, 2}
Suppose $H\supseteq\R$ is closed under integration.  
Then the following   are equivalent:
\begin{enumerate}
\item[\textup{(i)}] $\I(K)\subseteq K^\dagger$;
\item[\textup{(ii)}] $\ex^{f}\in K$ for all $f\in K$ with $f\prec  1$;
\item[\textup{(iii)}] $\ex^\phi,\cos \phi, \sin\phi\in H$ for all $\phi\in H$ with $\phi\prec 1$.
\end{enumerate}
\end{prop}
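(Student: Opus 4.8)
The plan is to leverage the equivalence recorded just above the statement: by \cite[Lemma~1.2.13]{ADH4}, condition (i) holds exactly when $W=\I(H)$ and $\I(H)\subseteq H^\dagger$. I would then establish (i)$\iff$(iii) by matching these two conditions against the two clauses of (iii), and dispose of (ii)$\iff$(iii) by a direct computation on real and imaginary parts. Throughout I would use $\der\mathcal{O}\subseteq\I(H)$, the identity $K\cap\c=H$, and one standard consequence of asymptotic integration (which $H$ has, being closed under integration and containing $\R$): every $y\in\I(H)$ has an \emph{infinitesimal} antiderivative $\phi\in H$, $\phi\prec1$, obtained by integrating $y$ and subtracting the real limit of the resulting bounded germ.

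For (ii)$\iff$(iii), write $f=a+b\imag\in K$ with $a,b\in H$; if $f\prec1$ then $a,b\prec1$ and $\ex^f=\ex^a\cos b+\imag\,\ex^a\sin b$, so (iii) gives $\ex^f\in K$, while conversely, for $\phi\in H$ with $\phi\prec1$, applying (ii) to $\phi$ and to $\imag\phi$ yields $\ex^\phi\in K\cap\c=H$ and $\cos\phi+\imag\sin\phi=\ex^{\imag\phi}\in K=H+H\imag$. For the first structural clause, $\I(H)\subseteq H^\dagger\iff$ closure under $\ex^\phi$: if $\I(H)\subseteq H^\dagger$ and $\phi\prec1$, then $\phi'\in\der\mathcal{O}\subseteq\I(H)\subseteq H^\dagger$, say $\phi'=u^\dagger$ with $u\in H^\times$; since the constant field is $\R$ this forces $\phi=\log|u|+c$ and hence $\ex^\phi=\ex^c|u|\in H$. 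Conversely $\ex^\phi\in H$ makes $y=\phi'=(\ex^\phi)^\dagger\in H^\dagger$ for any $y\in\I(H)$ with infinitesimal antiderivative $\phi$, so $\I(H)\subseteq H^\dagger$.

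The remaining clause, $W=\I(H)\iff$ closure under $\cos\phi,\sin\phi$, is where I expect the real work. Since $W\subseteq\I(H)$ always, one direction only needs $\I(H)\subseteq W$: if $\cos\phi,\sin\phi\in H$ then $\wr(\cos\phi,\sin\phi)=\phi'(\cos^2\phi+\sin^2\phi)=\phi'$, so taking $\phi$ an infinitesimal antiderivative of a given $y\in\I(H)$ shows $y\in W$. The hard direction is recovering the \emph{germs} $\cos\phi,\sin\phi$ from the algebraic fact that $\phi'\in W$: given $\phi\prec1$ we have $\phi'\in\der\mathcal{O}\subseteq\I(H)=W$, so $\phi'=\wr(a,b)$ with $a,b\in H$, $a^2+b^2=1$, and then $s:=a+b\imag\in S$ satisfies $s^\dagger=\tfrac{1}{2}(a^2+b^2)'+\wr(a,b)\imag=\phi'\imag$. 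The germ $e:=\ex^{\imag\phi}$ has the same logarithmic derivative $\imag\phi'$, so $s/e$ is a smooth nonvanishing germ with vanishing logarithmic derivative, hence eventually equal to a constant $\gamma\in\C$ with $|\gamma|=1$; writing $\gamma=\ex^{\imag c}$ ($c\in\R$) gives $a+b\imag=\ex^{\imag(\phi+c)}$, i.e.\ $a=\cos(\phi+c)$, $b=\sin(\phi+c)\in H$, and the addition formulas (with $\cos c,\sin c\in\R\subseteq H$) return $\cos\phi,\sin\phi\in H$. The delicate points are precisely this passage from $W$ to trigonometric germs through $S$ and the identification of a logarithmic-derivative-zero germ with a modulus-one constant $\gamma$, together with the elimination of the spurious phase $c$. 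Assembling the three equivalences yields (i)$\iff$(iii)$\iff$(ii).
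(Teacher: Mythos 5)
The paper does not actually prove this proposition: it is imported verbatim as \cite[Proposition~6.11]{ADH5}, so there is no in-paper argument to compare yours against. Judged on its own terms, your proof is essentially correct. The reduction of (iii) to the two structural conditions $\I(H)\subseteq H^\dagger$ and $W=\I(H)$, the use of closure under integration together with $\R\subseteq H$ to produce \emph{infinitesimal} antiderivatives of elements of $\I(H)$, the computation $\wr(\cos\phi,\sin\phi)=\phi'$, and the key analytic step --- a nonvanishing $\c^1$-germ with zero logarithmic derivative is eventually a constant, applied to $s/e$ with $|s/e|=1$ to recover $\cos\phi,\sin\phi$ from $a,b$ up to a real phase $c$ --- are all sound, as is the elementary bookkeeping for (ii)$\iff$(iii) via $K\cap\c=H$.

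One point needs repair. You open by invoking the equivalence ``$\I(K)\subseteq K^\dagger\iff W=\I(H)\subseteq H^\dagger$'' from \cite[Lemma~1.2.13]{ADH4}, but the paper records that equivalence (and the decomposition $K^\dagger=H^\dagger\oplus W\imag$ behind it) only under the standing hypothesis that $H$ is \emph{real closed}, which the proposition does not assume; the inclusion $K^\dagger\subseteq H^\dagger\oplus W\imag$ genuinely uses $\sqrt{a^2+b^2}\in H$. For (iii)$\Rightarrow$(i) this is harmless, since you only need the unconditional inclusion $H^\dagger+W\imag\subseteq K^\dagger$ together with the decomposition of an element of $\I(K)$ into real and imaginary parts lying in $\I(H)$. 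For (i)$\Rightarrow$(iii) you should bypass the lemma: given $\phi\prec 1$ in $H$, both $\phi'$ and $\phi'\imag$ lie in $\I(K)\subseteq K^\dagger$, so $\phi'=f^\dagger$ and $\phi'\imag=g^\dagger$ with $f,g\in K^\times$; your own constant-germ argument then gives $\ex^{\phi}=f/\gamma\in K\cap\c=H$ and $\ex^{\imag\phi}=g/\delta\in K$, whence $\cos\phi,\sin\phi\in H$ by taking real and imaginary parts. (Alternatively: if $g=a+b\imag$ has purely imaginary logarithmic derivative, then $(a^2+b^2)'=0$ forces $a^2+b^2$ to be a positive real constant, whose square root lies in $\R\subseteq H$, so no real closedness is needed to land in $W$.) With that substitution the argument is complete.
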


\noindent
Next we discuss ``polar coordinates'' of nonzero elements of $K$:

\begin{lemma}\label{lem:arg}
Let $f\in\c[\imag]^\times$. Then $\abs{f}\in\c^\times$, and $f=\abs{f}\ex^{\phi\imag}$ for some $\phi\in\c$.
Such~$\phi$ is unique up to addition of
an element of $2\pi\Z$. If also $f\in\c^r[\imag]^\times$, $r\in\N\cup\{\infty,\omega\}$, then $\abs{f}\in \c^r$
and $\phi\in\c^r$ for such $\phi$.
\end{lemma}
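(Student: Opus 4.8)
The plan is to separate the two assertions about $\abs f$ from the construction of the argument $\phi$, and to reduce the latter to the classical continuous lifting of a path in the unit circle $\{z\in\C:\abs z=1\}$ along the covering map $t\mapsto\ex^{t\imag}$. First I would dispose of $\abs f$: fixing a representative $(a,+\infty)\to\C$ of $f$ and writing $f=g+h\imag$ with $g,h\in\c$, we have $\abs f=\sqrt{g^2+h^2}\in\c$; since $f\in\c[\imag]^\times$ is eventually nonzero, $\abs f$ is eventually positive, whence $\abs f\in\c^\times$. Then $u:=f\abs f^{-1}$ is the germ of a continuous map into the unit circle, and the problem reduces to producing $\phi\in\c$ with $u=\ex^{\phi\imag}$, together with the stated uniqueness and regularity.

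For the existence of a continuous $\phi$ I would use the standard lifting argument, which is elementary over an interval. On any compact subinterval $[a',T]$, uniform continuity lets one partition $[a',T]$ into finitely many pieces on each of which $u$ stays in an open half of the circle (i.e.\ avoids some antipodal point), where a continuous branch of the argument is available; patching these branches and correcting by suitable integer multiples of $2\pi$ at the partition points yields a continuous $\phi$ on $[a',T]$ with $\ex^{\phi\imag}=u$. Extending successively over $[a',T]$ for increasing $T$, and using that two such lifts differ by a locally constant function (next step), these glue to a continuous $\phi$ on $(a',+\infty)$ whose germ is the desired element of $\c$. This lifting step is the main point of the argument; everything else is bookkeeping.

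Uniqueness up to $2\pi\Z$ is immediate: if $\ex^{\phi_1\imag}=\ex^{\phi_2\imag}=u$, then $\phi_1-\phi_2$ is a continuous germ with values in the discrete set $2\pi\Z$, hence eventually equal to a single element of $2\pi\Z$.

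Finally, for the regularity claim suppose $f\in\c^r[\imag]^\times$, so $g,h\in\c^r$. Since $(x,y)\mapsto\sqrt{x^2+y^2}$ is real-analytic on $\R^2\setminus\{0\}$ and $(g,h)$ is eventually bounded away from $0$, we get $\abs f\in\c^r$ and hence $u\in\c^r[\imag]$. For $\phi$, near any point $u$ lands in an arc on which a branch of the argument is given by a real-analytic function of the two coordinates; composing that branch with $u$ shows $\phi$ agrees, up to an additive element of $2\pi\Z$, with a $\c^r$ germ near that point, so $\phi\in\c^r$. Because the argument function is real-analytic, this treats $r\in\N$, $r=\infty$, and $r=\omega$ uniformly. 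When $r\ge1$ one can bypass the local branches altogether: with $u=\Re u+\imag\Im u$ one checks $\wr(\Re u,\Im u)=(\Re u)(\Im u)'-(\Re u)'(\Im u)$ equals $\phi'$, so a primitive of this germ (which lies in $\c^{r-1}$, hence has a $\c^r$ primitive) gives $\phi$ directly after fixing the constant so that $\ex^{\phi\imag}=u$ at one point. This makes the regularity of $\phi$ manifest and exhibits $\phi'$ as an instance of the wronskian quantity appearing in $W$ above.
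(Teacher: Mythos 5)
Your proposal is correct and follows essentially the same route as the paper: the paper reduces to $f/\abs{f}$ on the unit circle and invokes Dieudonn\'e's path-lifting theorem (9.8.1) on compact intervals $[a,b]$ with $b\to+\infty$ (which is exactly your uniform-continuity/local-branch argument carried out by hand), gets uniqueness from the discreteness of $2\pi\Z$, and obtains regularity from $\imag\phi'=f'/f$ --- the same identity underlying your wronskian formula $\phi'=\wr(\Re u,\Im u)$.
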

\begin{proof}
It is clear that  $\abs{f}\in \c^r$ if $f\in\c^r[\imag]^\times$. To show existence of~$\phi$ we replace~$f$ by~$f/\abs{f}$
to arrange $\abs{f}=1$.
Take $a\in\R$ and a continuous representative~${[a,+\infty)\to \C}$ of $f$, also denoted by $f$, such that $\abs{f(t)}=1$ for all $t\geq a$.
The proof of \cite[(9.8.1)]{Dieudonne} shows that for $b\in (a,+\infty)$ and $\phi_a\in\R$ with
$f(a)=\ex^{\phi_a\imag}$
there is a unique continuous function
$\phi\colon [a,b]\to\R$ such that $\phi(a)=\phi_a$ and $f(t) = \ex^{\phi(t)\imag}$ for all $t\in [a,b]$, and if also $f|_{[a,b]}$ is of class
$\c^1$, then so is this $\phi$  with $\imag\phi'(t) = f'(t)/f(t)$ for all~$t\in[a,b]$. 
With $b\to +\infty$ this yields the desired result. 
\end{proof}

\begin{lemma}\label{lem:fexphii}
Suppose $H\supseteq\R$ is Liouville closed  and $f\in \c^1[\imag]^\times$. Then~$f^\dagger\in K$ iff~$\abs{f}\in H^>$ and  $f=\abs{f}\ex^{\phi\imag}$ for some $\phi\in H$.
If in addition $f\in K^\times$, then $f=\abs{f}\ex^{\phi\imag}$ for some $\phi\preceq 1$ in $H$.
\end{lemma}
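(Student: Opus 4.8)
The plan is to reduce everything to the polar decomposition from Lemma~\ref{lem:arg} and then transport the resulting real and imaginary data into $H$ using Liouville closedness. First I would apply Lemma~\ref{lem:arg} to $f\in\c^1[\imag]^\times$ to fix a representation $f=\abs{f}\ex^{\psi\imag}$ with $\abs{f}\in\c^1$ (positive) and $\psi\in\c^1$; differentiating gives the clean formula
\[
f^\dagger\ =\ \abs{f}^\dagger+\psi'\imag,
\]
whose real part is the real germ $\abs{f}^\dagger$ and whose imaginary part is the real germ $\psi'$. The backward direction of the equivalence is then immediate: if $\abs{f}\in H^>$ and $f=\abs{f}\ex^{\phi\imag}$ with $\phi\in H$, then $\abs{f}^\dagger\in H$ and $\phi'\in H$, so the displayed formula (with $\psi=\phi$) shows $f^\dagger\in H+H\imag=K$.

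For the forward direction, assume $f^\dagger\in K$ and write $f^\dagger=p+q\imag$ with $p,q\in H$. Matching real and imaginary parts in the displayed formula (using $\c[\imag]=\c+\c\imag$) yields $\abs{f}^\dagger=p\in H$ and $\psi'=q\in H$. Now I would invoke Liouville closedness twice. Since $p\in H$, exponential integration provides $h\in H^\times$ with $h^\dagger=p$, and after replacing $h$ by $-h$ if necessary we may assume $h>0$; then $(\abs{f}/h)^\dagger=0$, so $\abs{f}/h$ is a positive real constant, whence $\abs{f}\in H^>$. Since $q\in H$, integration provides $\phi_0\in H$ with $\phi_0'=q=\psi'$; then $\psi-\phi_0$ has zero derivative, hence is a real constant, so $\psi=\phi_0+(\psi-\phi_0)\in H$ because $\R\subseteq H$. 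Taking $\phi:=\psi$ gives the required representation $f=\abs{f}\ex^{\phi\imag}$ with $\phi\in H$.

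Finally, for the supplementary claim, note that $f\in K^\times$ forces $f^\dagger\in K$ automatically, so the first part already supplies $\abs{f}\in H^>$ together with some $\phi\in H$ satisfying $f=\abs{f}\ex^{\phi\imag}$. Writing $f=a+b\imag$ with $a,b\in H$ and taking real parts in $f=\abs{f}(\cos\phi+\imag\sin\phi)$ gives $\cos\phi=a/\abs{f}\in H$ (and likewise $\sin\phi=b/\abs{f}\in H$). The content is then that any such $\phi$ must satisfy $\phi\preceq 1$; since all admissible arguments differ by constants (Lemma~\ref{lem:arg}), it suffices to exclude $\phi\succ 1$. This last step is where I expect the only genuine subtlety. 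If $\phi\not\preceq 1$, then $\phi$, being an element of the Hardy field $H$, is eventually monotonic and unbounded, so $\phi\to+\infty$ or $\phi\to-\infty$; but then the germ $\cos\phi$ attains both values $+1$ and $-1$ on arbitrarily large arguments, hence is neither eventually of constant sign nor eventually $0$, contradicting $\cos\phi\in H\subseteq\c$. Therefore $\phi\preceq 1$. Thus the main obstacle is not the algebra but this non-oscillation argument, which rests on the defining feature of Hardy fields that each of their elements is eventually of constant sign and has a limit in $\R\cup\{\pm\infty\}$.
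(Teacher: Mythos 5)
Your proof is correct. The main equivalence is argued exactly as the paper does (the paper compresses it to one line): take $\phi$ from Lemma~\ref{lem:arg}, read off $\Re f^\dagger=\abs{f}^\dagger$ and $\Im f^\dagger=\phi'$, and use exponential integration and integration in the Liouville closed $H\supseteq\R$ to pull $\abs{f}$ and $\phi$ into $H$ up to real constants. Where you genuinely diverge is the supplementary claim. The paper gets $\phi\preceq 1$ structurally: for $f\in K^\times$ one has $f^\dagger\in K^\dagger=H^\dagger\oplus W\imag$ with $W=\big\{\!\wr(a,b):a,b\in H,\ a^2+b^2=1\big\}\subseteq\I(H)$ (the remarks preceding Lemma~\ref{lem:logder real closed}), so $\phi'\in\I(H)$, i.e.\ $\phi'\preceq g'$ for some $g\preceq 1$ in $H$, which forces $\phi\preceq 1$. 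You instead observe that $\cos\phi=\Re(f)/\abs{f}\in H$ and that an unbounded $\phi\in H$ is eventually monotonic with limit $\pm\infty$, making $\cos\phi$ change sign infinitely often --- impossible for an element of an ordered Hausdorff field. Both arguments are sound; yours is more elementary and self-contained (no appeal to the decomposition of $K^\dagger$ or to $\I(H)$), while the paper's keeps the statement inside the $\I(H)$-framework that is reused immediately afterwards in Corollary~\ref{cor:fexphii} and in Lemma~\ref{lem:logder real closed}.
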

\begin{proof}
Take $\phi\in\c$ as in Lemma~\ref{lem:arg}. Then $\phi\in\c^1$ and $\Re f^\dagger=\abs{f}^\dagger$, $\Im f^\dagger=\phi'$.
If~$f\in K^\times$, then
the remarks preceding Lemma~\ref{lem:logder real closed} give $\phi'\in \I(H)$,  so~${\phi\preceq 1}$. 
\end{proof}
 
\begin{cor}\label{cor:fexphii}
Suppose $H\supseteq\R$ is Liouville closed with $\I(K)\subseteq K^\dagger$. Let $L$ be a differential subfield of $\Calinf[\imag]$ containing $K$.
Then $L^\dagger \cap K = K^\dagger$.
\end{cor}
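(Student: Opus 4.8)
The inclusion $K^\dagger\subseteq L^\dagger\cap K$ is immediate from $K\subseteq L$, so the plan is to prove the reverse inclusion. Take $u\in L^\dagger\cap K$, say $u=f^\dagger$ with $f\in L^\times$. Since $L\subseteq\Calinf[\imag]\subseteq\c^1[\imag]$ and $f^\dagger=u\in K$, Lemma~\ref{lem:fexphii} applies and yields $\abs{f}\in H^>$ together with a representation $f=\abs{f}\ex^{\phi\imag}$ for some $\phi\in H$; consequently $u=f^\dagger=\abs{f}^\dagger+\phi'\imag$. As $\abs{f}^\dagger\in H^\dagger\subseteq K^\dagger$, everything comes down to showing $\phi'\imag\in K^\dagger$, and for this I would show that $\phi\preceq 1$.

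The key observation is that the hypothesis that $L$ is a \emph{subfield} of $\Calinf[\imag]$ forces non-oscillation, and this is what pins down the size of $\phi$. Indeed, $\ex^{\phi\imag}=f/\abs{f}\in L^\times$, so its inverse $\ex^{-\phi\imag}$ also lies in $L$, whence $\cos\phi=\tfrac{1}{2}(\ex^{\phi\imag}+\ex^{-\phi\imag})$ and $\sin\phi=\tfrac{1}{2\imag}(\ex^{\phi\imag}-\ex^{-\phi\imag})$ belong to $L$ (using $\imag\in K\subseteq L$). Now every nonzero element of the field $L$ is invertible in $\Calinf[\imag]$, hence has a representative that is eventually nonzero. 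Since $H$ is a Hardy field, $\phi$ has a limit in $\R\cup\{\pm\infty\}$; were this limit $\pm\infty$, then $\cos\phi$ would change sign infinitely often and so have arbitrarily large zeros while not being the zero germ, contradicting that $\cos\phi\in L$ is eventually nonzero. Therefore $\phi$ is bounded, i.e.\ $\phi\preceq 1$.

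From $\phi\preceq 1$ it follows that $\phi'\in\I(H)$ (take $g=\phi$ in the definition of $\I(H)$). The hypothesis $\I(K)\subseteq K^\dagger$ gives $W=\I(H)$ by the remarks preceding Lemma~\ref{lem:logder real closed}, where moreover $W\imag=S^\dagger\subseteq K^\dagger$; hence $\phi'\imag\in\I(H)\imag=W\imag\subseteq K^\dagger$. Combined with $\abs{f}^\dagger\in H^\dagger\subseteq K^\dagger$ this yields $u=\abs{f}^\dagger+\phi'\imag\in K^\dagger$, as desired. Alternatively, once $\phi\preceq 1$, writing $\phi=c+\psi$ with $c=\lim\phi\in\R$ and $\psi\prec 1$ in $H$, Proposition~\ref{prop:cos sin infinitesimal, 2} gives $\ex^{\psi\imag}\in K$ and $\ex^{c\imag}\in\R[\imag]\subseteq K$, so $\ex^{\phi\imag}\in K^\times$ and $\phi'\imag=(\ex^{\phi\imag})^\dagger\in K^\dagger$ directly.

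The main obstacle---and the crux of the whole argument---is recognizing that the field condition on $L$ (rather than mere membership in $\Calinf[\imag]$) is exactly what bounds $\phi$: without it the statement would fail, since e.g.\ $f=\abs{f}\ex^{x^2\imag}$ has $f^\dagger=\abs{f}^\dagger+2x\imag\in K$ with $2x\notin\I(H)$, yet such an $f$ cannot lie in any subfield of $\Calinf[\imag]$ because $\cos(x^2)$ is not eventually nonzero. Once this point is isolated, the remaining steps are routine bookkeeping with the decomposition $K^\dagger=H^\dagger\oplus W\imag$.
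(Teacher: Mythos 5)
Your proof is correct and follows essentially the same route as the paper: apply Lemma~\ref{lem:fexphii} to write $f=\abs{f}\ex^{\phi\imag}$ with $\phi\in H$, deduce $\phi\preceq 1$ from the fact that $\cos\phi\in L$ cannot oscillate, and then conclude. The only (harmless) difference is that your main line finishes via the decomposition $K^\dagger=H^\dagger\oplus\I(H)\imag$ rather than by showing $f\in K$ outright, while your ``alternative'' ending via Proposition~\ref{prop:cos sin infinitesimal, 2} is exactly the paper's.
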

\begin{proof}
Let $f\in L^\times$ satisfy $f^\dagger \in K$. Then $f=\abs{f}\ex^{\phi\imag}$ with $\abs{f}\in H^>$, $\phi\in H$,
by Lem\-ma~\ref{lem:fexphii}. Hence $\ex^{\phi\imag}, \ex^{-\phi\imag}\in L$, thus
$\cos \phi = \frac{1}{2}(\ex^{\phi\imag}+\ex^{-\phi\imag})\in L$. In particular, $\cos \phi$ doesn't oscillate, so $\phi\preceq 1$. Thus $f=|f|(\cos \phi + \imag \sin \phi)\in K$ by Proposition~\ref{prop:cos sin infinitesimal, 2}.
\end{proof}

\section{Hensel's Lemma for Analytic Functions on Hahn Fields}\label{sec:hens}

\noindent
Let $\k$ be a field and $\fM$ a (multiplicatively written) ordered abelian group, with the ordering of $\fM$ denoted by $\preceq$.  
Let $K=\k[[\fM]]$ be the corresponding Hahn field over $\k$. Its (Hahn) valuation has valuation ring $\mathcal{O}:=\k[[\fM^{\preceq 1}]]$, with
maximal ideal~$\smallo:=\k[[\fM^{\prec 1}]]$. 
Let~$Q(Z)$ be a power series $\sum_n a_nZ^n$ with all $a_n\in \mathcal{O}$. Then for $z\in \smallo$  the sum~$\sum_n a_nz^n$ exists with value $Q(z):=\sum_n a_nz^n\in \mathcal{O}$. Let~$Q'(Z):=\sum_n ({n+1})a_{n+1}Z^n$ be the formal derivative of $Q(Z)$.  Here is a Hensel type lemma: 

\begin{lemma}\label{lem:hens}
If $Q(0)\prec 1$ and $Q'(0)\asymp 1$, then $Q(z)=0$ for a unique $z\in \smallo$.
\end{lemma}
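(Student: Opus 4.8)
The plan is to recast $Q(z)=0$ as a fixed-point equation and solve it by a contraction argument inside the spherically complete field $K$. Note $Q(0)=a_0\prec1$ and $Q'(0)=a_1\asymp1$, so $a_1\in\mathcal{O}^{\times}$; since $\sum_n a_nz^n$ converges for all $z\in\smallo$, the map
\[
\Phi(z)\ :=\ -a_1^{-1}\Big(a_0+\sum_{n\ge2}a_nz^n\Big)
\]
is defined on $\smallo$, and for $z\in\smallo$ a zero of $Q$ is the same thing as a fixed point of $\Phi$ (indeed $\Phi(z)=z-a_1^{-1}Q(z)$). Because $a_0\prec1$, $\sum_{n\ge2}a_nz^n\preceq z^2\prec1$, and $a_1^{-1}\asymp1$, we get $\Phi(z)\in\smallo$, so $\Phi$ is a self-map of $\smallo$.

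Next I would establish that $\Phi$ is a strict contraction and read off uniqueness. For distinct $z,w\in\smallo$ and $n\ge2$, the factorization $z^n-w^n=(z-w)\sum_{i=0}^{n-1}z^iw^{n-1-i}$ shows $z^n-w^n\prec z-w$, since the cofactor is a sum of products of at least one factor drawn from $\smallo$ and hence is $\prec1$. As $a_n\in\mathcal{O}$ and $a_1^{-1}\asymp1$, summing ultrametrically over $n$ gives
\[
\Phi(z)-\Phi(w)\ =\ -a_1^{-1}\sum_{n\ge2}a_n\big(z^n-w^n\big)\ \prec\ z-w .
\]
Uniqueness is then immediate: two fixed points $z,z'\in\smallo$ would satisfy $z-z'=\Phi(z)-\Phi(z')\prec z-z'$, forcing $z=z'$. (Equivalently, $Q(z)-Q(z')=(z-z')(a_1+\eta)$ with $\eta\prec1$ and $a_1+\eta\asymp1\neq0$.)

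For existence I would invoke the completeness of the Hahn field, and I expect this to be the crux. Since $\smallo$ is a ball in the maximally --- hence spherically --- complete valued field $K$, it is a spherically complete ultrametric space, so the ultrametric contraction-mapping principle for such spaces (due to Prie{\ss}-Crampe and Ribenboim) applies to the strict contraction $\Phi$ and produces a fixed point $z\in\smallo$, which is the required zero of $Q$. The reason one cannot simply take a limit of the Picard iterates $z_0:=0$, $z_{k+1}:=\Phi(z_k)$ is instructive: the strict contraction makes the increments $z_{k+1}-z_k$ strictly $\prec$-decreasing, yet in a general Hahn field this does not force them to tend to $0$, because the value group of $K$ may contain strictly increasing sequences that are bounded above, and $K$ is accordingly not sequentially complete. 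It is precisely spherical completeness --- available here because $K=\k[[\fM]]$ is a Hahn field --- that lets one pass to the fixed point, so I would rely on the fixed-point theorem rather than on a bare sequential limit.
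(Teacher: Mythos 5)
Your proof is correct and follows essentially the same route as the paper: rewrite $Q(z)=0$ as a fixed-point equation for the map $z\mapsto z-a_1^{-1}Q(z)$ on $\smallo$ (the paper first normalizes to $a_1=1$, which is the same thing), verify it is a strict contraction using $z^n-w^n\prec z-w$ for $n\ge 2$, and conclude by the fixed-point theorem for contractive maps on spherically complete ultrametric spaces --- exactly the result [ADH, 2.2.12] that the paper invokes. Your closing remark correctly identifies why spherical completeness, rather than sequential completeness, is the relevant hypothesis.
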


\begin{proof} Assume $Q(0)\prec 1$, $Q'(0)\asymp 1$. This means $a_0\prec  1$ and $a_1\asymp 1$. Multiplying with $a_1^{-1}$ we reduce to the case $a_1=1$,  so for $z\in \smallo$ we have
$$Q(z)=0\ \Longleftrightarrow\ z-Q(z)=-a_0-a_2z^2-a_3z^3- \cdots= z.$$ Now for distinct $z_1, z_2\in \smallo$ we have
$z_1^n-z_2^n\prec z_1-z_2$ for $n\ge 2$, so the map~$z\mapsto {z-Q(z)}\colon \smallo \to\smallo$ is contracting:
$\big(z_1-Q(z_1)\big)-\big(z_2-Q(z_2)\big) \prec z_1-z_2$ for distinct~${z_1, z_2\in \smallo}$. Thus this  map has a unique fixpoint  $z\in \smallo$ by [ADH, 2.2.12]. 
\end{proof}

\noindent
The valued field $\T$ of transseries is not a Hahn field, but it is a
direct union of Hahn subfields over the coefficient field $\R$, and in
this way the above lemma applies to $\T$.
For $c\in\R$ and $f\in\T^>$ we set $f^c:=\exp(c\log f)\in \T$. Then~$(f^c)^\dagger=cf^\dagger$ and
$z\in\T$ with $z\prec 1$ gives~$(1+z)^c=\sum_n {c\choose n}z^n\in\T$: for this, note that
for real $t\in (-1,1)$,
$$(1+t)^c\ =\ \sum_{n=0}^\infty {c\choose n}t^n\ =\ \ex^{c\log(1+t)}\ =\ 
\sum_{n=0}^\infty \frac{c^n}{n!}\left(\sum_{m=1}^\infty \frac{(-1)^{m-1}t^m}{m}\right)^n,$$
so the formal power series $\sum_{n=0}^\infty {c\choose n}Z^n$ and $\sum_{n=0}^\infty \frac{c^n}{n!}\big(\sum_{m=1}^\infty \frac{(-1)^{m-1}Z^m}{m}\big)^n$ in $Z$ are equal, from which the identity claimed about $z$ follows by substitution. 


\begin{cor}\label{cor:hens, 2}
Let $c\in\R\setminus\{-1\}$ and $\varepsilon\in\smallo_{\T}$. Then there is a unique $z\in\smallo_{\T}$ with
$$(1+z)^c \cdot ( 1+\varepsilon +z)\ =\ 1.$$
\end{cor}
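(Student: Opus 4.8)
The plan is to massage the equation into the exact shape required by Lemma~\ref{lem:hens} and then transfer that lemma from Hahn fields to $\T$. Since $z\prec 1$ forces $1+z\asymp 1$, and in particular $1+z\in\T^{>}$, the power-function identities recalled just before the corollary give $(1+z)^c(1+z)=(1+z)^{c+1}$. Hence the equation $(1+z)^c\,(1+\varepsilon+z)=1$ is equivalent to
\[
(1+z)^{c+1}+\varepsilon\,(1+z)^c-1\ =\ 0.
\]

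Next I would introduce the formal power series $Q(Z):=\sum_n a_nZ^n$ with
\[
a_0:=\varepsilon,\qquad a_n:=\binom{c+1}{n}+\varepsilon\binom{c}{n}\quad(n\ge 1),
\]
so that, by the binomial expansions $(1+z)^{c+1}=\sum_n\binom{c+1}{n}z^n$ and $(1+z)^{c}=\sum_n\binom{c}{n}z^n$ valid for $z\prec 1$, we get $Q(z)=(1+z)^{c+1}+\varepsilon(1+z)^c-1$ for all $z\in\smallo_{\T}$; thus $Q(z)=0$ is precisely the (rewritten) equation to be solved. The coefficients satisfy $a_n\in\mathcal{O}$, since each is a real number plus $\varepsilon$ times a real number and $\varepsilon\preceq 1$. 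I would then verify the two Hensel hypotheses: $Q(0)=a_0=\varepsilon\prec 1$, while $Q'(0)=a_1=(c+1)+\varepsilon c\asymp 1$. This last point is exactly where the hypothesis $c\neq -1$ is used decisively: $c+1$ is a nonzero real constant, hence $\asymp 1$, and $\varepsilon c\prec 1$, so that $a_1\sim c+1\asymp 1$. (Were $c=-1$, we would have $a_1=\varepsilon c\prec 1$ and the lemma would not apply.)

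Because Lemma~\ref{lem:hens} is stated for a Hahn field whereas $\T$ is only a direct union of Hahn subfields over $\R$, the final --- and really the only delicate --- step is to descend to such a subfield. I would pick a Hahn subfield $\R[[\fM_0]]$ of $\T$ whose group $\fM_0$ contains the support of $\varepsilon$; then every $a_n$ lies in its valuation ring, and Lemma~\ref{lem:hens} applied inside $\R[[\fM_0]]$ produces a unique $z$ in its maximal ideal, which is contained in $\smallo_{\T}$, yielding $Q(z)=0$ and hence a solution of the original equation. For uniqueness across all of $\smallo_{\T}$, any second solution $z'\in\smallo_{\T}$ lies, together with $\varepsilon$, in some Hahn subfield of $\T$; enlarging $\fM_0$ so that it contains the supports of both $z$ and $z'$ and invoking the uniqueness clause of Lemma~\ref{lem:hens} in that larger subfield forces $z'=z$. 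I expect this passage through a suitable Hahn subfield to be the main conceptual obstacle, the remaining computations being the routine binomial and asymptotic estimates indicated above.
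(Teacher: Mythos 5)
Your proof is correct and follows essentially the same route as the paper: both reduce the equation to the Hensel-type Lemma~\ref{lem:hens} by expanding the left-hand side as a power series $Q(Z)$ with coefficients in the valuation ring, observing that $Q(0)=\varepsilon\prec 1$ and $Q'(0)=(1+c)+c\varepsilon\asymp 1$ precisely because $c\neq -1$ (indeed your series, obtained via $(1+Z)^c(1+Z)=(1+Z)^{c+1}$, coincides with the paper's $\big(\sum_n\binom{c}{n}Z^n\big)(1+\varepsilon+Z)-1$ by Pascal's rule). Your extra care about descending to a Hahn subfield of $\T$ is exactly the content of the remark the paper places just before the corollary, so nothing is missing.
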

\begin{proof} Note that  $Q(Z):= \big(\sum_n{c\choose n}Z^n\big)(1+\varepsilon+Z)-1$ satisfies the
assumption of Lem\-ma~\ref{lem:hens}: its constant term is $\varepsilon$ and its term of degree $1$  is~$(1+c+c\varepsilon)Z$.
\end{proof}

\section{Relative Differential Closure}\label{sec:diff closure}

\noindent
Let $K\subseteq L$ be an extension of differential fields, and let $r$ range over $\N$. We say that~$K$ is {\bf $r$-differentially closed} in $L$ 
for every~$P\in K\{Y\}^{\neq}$ of order~$\leq r$, each zero of $P$ in~$L$   lies in~$K$. 
We also say that $K$ is {\bf weakly $r$-differentially closed} in~$L$ if 
 every~$P\in K\{Y\}^{\ne}$ of order~$\leq r$ with a zero in~$L$ has a zero in~$K$.
 We abbreviate ``$r$-differentially closed'' by ``$r$-$\d$-closed.''
 Thus 
$$\text{$K$ is $r$-$\d$-closed in $L$}\quad\Longrightarrow\quad\text{$K$ is  weakly $r$-$\d$-closed in~$L$,}$$ 
and
\begin{align}\label{eq:0-d-closed}
\text{$K$ is $0$-$\d$-closed in $L$}&\quad\Longleftrightarrow\quad 
  \text{$K$ is weakly $0$-$\d$-closed in $L$} \\ &\quad\Longleftrightarrow\quad
  \text{$K$ is algebraically closed in $L$.} \notag
\end{align} 
Hence, with $C$ the constant field of $K$ and $C_L$ the constant field of $L$:
\begin{equation}\label{eq:weakly 0-d-closed}
\text{$K$ is weakly $0$-$\d$-closed in $L$}\quad\Longrightarrow\quad\text{$C$ is algebraically closed in $C_L$.}
\end{equation}
Also, if~$K$ is weakly $0$-$\d$-closed in $L$
and $L$ is algebraically closed, then $K$ is algebraically closed, and similarly
with ``real closed'' in place of ``algebraically closed''. In [ADH, 5.8] we defined
$K$ to be {\it  weakly $r$-$\d$-closed}\/ if every $P\in K\{Y\}\setminus K$ of~order~$\leq r$ has a zero in $K$.
Thus if $K$ is weakly $0$-$\d$-closed, then $K$ is algebraically closed, and
$$\text{$K$ is weakly $r$-$\d$-closed}
\ \Longleftrightarrow\  \begin{cases} &\parbox{21em}{$K$ is weakly $r$-$\d$-closed in every differential field extension of $K$.}\end{cases}$$
If
$K$ is weakly   $r$-$\d$-closed in $L$, then $P(K)=P(L)\cap K$ for all $P\in K\{Y\}$ of or\-der~$\leq r$;
in particular, 
\begin{equation}\label{eq:weakly 1-d-closed}
\text{$K$ is weakly $1$-$\d$-closed in $L$}\quad\Longrightarrow\quad \der K=\der L\cap K.
\end{equation}
Also, 
\begin{equation}\label{eq:1-d-closed}
\text{$K$ is $1$-$\d$-closed in $L$}\quad\Longrightarrow\quad C=C_L\text{ and } K^\dagger=L^\dagger\cap K.
\end{equation}
Moreover:

\begin{lemma}\label{lem:weakly r-d-closed}
Suppose $K$ is weakly $r$-$\d$-closed in $L$. If $L$ is $r$-linearly surjective, then so is $K$, and if $L$
is
$(r+1)$-linearly closed, then so is $K$.
\end{lemma}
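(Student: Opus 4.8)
The plan is to translate both assertions into statements about zeros of differential polynomials of order at most $r$ in $L$, and then apply the hypothesis that $K$ is weakly $r$-$\d$-closed in $L$. Recall that to each $A=\sum_{i\le n}a_i\der^i\in K[\der]$ of order $n$ one associates the differential polynomial $A(Y)=\sum_{i\le n}a_iY^{(i)}\in K\{Y\}$, which is homogeneous of degree $1$ and of order $n$; since the $Y^{(i)}$ are algebraically independent over $K$, $A\ne0$ forces $A(Y)$ to be a nonzero polynomial of positive degree. For the first claim, assume $L$ is $r$-linearly surjective and let $A\in K[\der]^{\ne}$ with $\order(A)\le r$ and $b\in K$; I must find $y\in K$ with $A(y)=b$. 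The differential polynomial $P:=A(Y)-b\in K\{Y\}$ is nonzero (as $A(Y)$ has positive degree) and of order $\le r$. Because $L$ is $r$-linearly surjective, $A(L)=L\ni b$, so $P$ has a zero in $L$; weak $r$-$\d$-closedness then yields a zero $y\in K$, i.e.\ $A(y)=b$. Hence $A(K)=K$, so $K$ is $r$-linearly surjective.

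For the second claim I would use the Riccati correspondence from [ADH]: to $A\in K[\der]^{\ne}$ of order $n\ge1$ is attached its Riccati polynomial $\Ric(A)\in K\{Z\}$, of order $n-1$ and nonzero, satisfying $A(y)=\Ric(A)(y^\dagger)\,y$ for $y\in K^\times$; moreover, for $\phi\in K$ one has $\Ric(A)(\phi)=0$ iff $\der-\phi$ right-divides $A$ in $K[\der]$, the remainder of right division of $A$ by $\der-\phi$ being exactly $\Ric(A)(\phi)$. The crucial feature is that $\Ric(A)$ has order $n-1$: peeling off one linear right factor lowers the order by one, which is precisely the gap between $r+1$ and $r$. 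So, assuming $L$ is $(r+1)$-linearly closed, I would prove by strong induction on $n$ that every $A\in K[\der]^{\ne}$ of order $n\le r+1$ splits over $K$. The cases $n\le1$ are trivial. For $n\ge2$: since $A$ has order $\le r+1$ and $K\subseteq L$, the operator $A$ splits over $L$, hence has a linear right factor $\der-\psi$ with $\psi\in L$, so $\Ric(A)(\psi)=0$. As $\Ric(A)\in K\{Z\}^{\ne}$ has order $n-1\le r$ and a zero in $L$, weak $r$-$\d$-closedness provides a zero $\phi\in K$, and then $A=B\,(\der-\phi)$ with $B\in K[\der]$ of order $n-1$. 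By the induction hypothesis $B$ splits over $K$, hence so does $A$. This shows $K$ is $(r+1)$-linearly closed.

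The routine ingredients are the two recollected facts about $A(Y)$ and $\Ric(A)$; the single point needing care is the order bookkeeping that produces the shift by one, namely that the right factor is extracted from the order-$(n-1)$ Riccati equation rather than from the order-$n$ equation $A(Y)=0$, so that weak $r$-$\d$-closedness already suffices to factor operators of order up to $r+1$. I expect the main (and only mild) obstacle to be confirming the properties of the Riccati polynomial in the present generality: that $\Ric(A)$ is nonzero of order exactly $n-1$ with coefficients in $K$, and that a zero $\phi\in K$ of it yields a genuine right factor $\der-\phi$ of $A$ over $K$. Both follow from the remainder formula $A=B\,(\der-\phi)+\Ric(A)(\phi)$ and are standard in [ADH], so no serious difficulty is anticipated.
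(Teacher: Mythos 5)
Your proof is correct and follows essentially the same route as the paper: the first claim via the observation that weak $r$-$\d$-closedness gives $P(K)=P(L)\cap K$ for $P$ of order $\le r$ (applied to $P=A(Y)$), and the second via the Riccati transform exactly as in the proof of [ADH, 5.8.9], which the paper cites. The order bookkeeping ($\Ric(A)$ nonzero of order $n-1\le r$, a zero in $K$ yielding a right factor $\der-\phi$, then induction) is handled correctly.
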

\begin{proof}
The first claim from the remarks preceding the lemma, and the proof of the second statement is like that of  [ADH, 5.8.9].
\end{proof}

\noindent
Sometimes we get more than we bargained for:

\begin{lemma}
Suppose $K$ is not algebraically closed, $C\ne K$, and
$K$ is weakly $r$-$\d$-closed in $L$. Let $Q_1,\dots,Q_m\in K\{Y\}^{\neq}$ of order~$\leq r$ have a common zero
in~$L$,~$m\ge 1$. Then they have a common zero in $K$.
\end{lemma}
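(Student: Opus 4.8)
The plan is to collapse the system $Q_1=\cdots=Q_m=0$ into a single differential equation by means of an \emph{anisotropic form}, and then to invoke weak $r$-$\d$-closedness of $K$ in $L$. The only structural input I will need from $K$ is that it is not algebraically closed.

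First I would extract the algebraic ingredient. Since $K$ is not algebraically closed, there is a monic irreducible $p\in K[T]$ of degree $d\ge 2$, and its homogenization $F(X_1,X_2):=X_2^d\,p(X_1/X_2)$ is a binary form whose only zero in $K^2$ is $(0,0)$ (for $X_2\ne 0$ a nontrivial zero would give a root of $p$ in $K$, while $F(X_1,0)=X_1^d$). Iterating substitution, always feeding $F$ two forms of equal degree so that homogeneity is preserved, produces anisotropic forms over $K$ in $2^k$ variables for every $k$; setting the superfluous variables equal to $0$ then yields, for the given $m\ge 1$, an anisotropic form $N=N(X_1,\dots,X_m)\in K[X_1,\dots,X_m]$. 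The point I must secure is that $N$ stays anisotropic over the purely transcendental extension $\Frac K\{Y\}=K(Y,Y',Y'',\dots)$ of $K$: this holds because $K$ is relatively algebraically closed in $\Frac K\{Y\}$, so $p$ acquires no root there, whence $F$, and with it each of its iterates $N$, remains anisotropic over $\Frac K\{Y\}$.

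Next I would set $P:=N(Q_1,\dots,Q_m)\in K\{Y\}$ and check three things. Its order is $\le r$, since each $Q_i$ has order $\le r$. It is nonzero: if $P=0$ in $K\{Y\}$, then $N(Q_1,\dots,Q_m)=0$ in $\Frac K\{Y\}$, and anisotropy of $N$ over $\Frac K\{Y\}$ forces every $Q_i=0$, contradicting $Q_i\in K\{Y\}^{\neq}$. And $P$ has a zero in $L$: if $a\in L$ is the given common zero of $Q_1,\dots,Q_m$, then $P(a)=N(0,\dots,0)=0$ because $N$ is a form. Thus $P$ is a nonzero element of $K\{Y\}$ of order $\le r$ with a zero in $L$.

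Finally, weak $r$-$\d$-closedness of $K$ in $L$ furnishes a zero $z\in K$ of $P$, that is, $N(Q_1(z),\dots,Q_m(z))=0$ with all $Q_i(z)\in K$. Since $N$ is anisotropic over $K$, this forces $Q_1(z)=\cdots=Q_m(z)=0$, so $z$ is the desired common zero in $K$. I expect the main obstacle to lie in the construction step: producing an anisotropic form in exactly $m$ variables over $K$ and, more delicately, verifying that it remains anisotropic after passage to the differential rational function field $\Frac K\{Y\}$ (which is what makes $P$ nonzero). Once $N$ is in hand, the reduction to a single equation and the appeal to weak $r$-$\d$-closedness are routine.
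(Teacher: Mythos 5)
Your proof is correct and follows essentially the same route as the paper: both construct a polynomial in $m$ variables over $K$ whose only zero in $K^m$ is the origin, starting from the homogenization of a root-free polynomial of degree $\ge 2$, then substitute $Q_1,\dots,Q_m$ to get a single nonzero differential polynomial of order $\le r$ and apply weak $r$-$\d$-closedness, recovering the common zero from anisotropy over $K$. The only immaterial differences are that you keep the auxiliary polynomial genuinely homogeneous by doubling variables and specializing, where the paper iterates $g\big(\Phi(X_1,\dots,X_m),X_{m+1}\big)$ directly, and that you justify $P\neq 0$ via relative algebraic closedness of $K$ in $\operatorname{Frac}K\{Y\}$, where the paper invokes [ADH, 4.2.1].
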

\begin{proof}
We claim that some polynomial $\Phi\in K[X_1,\dots,X_m]$ has $(0,\dots,0)\in K^m$ as its only
zero in $K^m$.
(This is folklore, but we didn't find a proof in the literature.)
The claim is clear for $m=1$. Since $K$ is not algebraically closed, we have a univariate polynomial $f\in K[X]$ of degree $>1$ 
 without a zero in $K$. 
Then the homogenization  $g(X,Y)\in K[X,Y]$
 of~$f(X)$ has $(0,0)$ as its only zero in $K^2$. 
This proves the claim for $m=2$. Now use induction on $m$: if $\Phi$ has the above property, then
 $g\big(\Phi(X_1, \ldots, X_{m}), X_{m+1}\big)$ has~$(0,\dots, 0,0)\in K^{m+1}$ as its only zero in $K^{m+1}$.

By the claim, the differential
polynomial~$P:=\Phi(Q_1,\dots,Q_m)\in K\{Y\}$ is nonzero (use [ADH, 4.2.1]) and
has order~$\leq r$. For $y\in L$ we have $$Q_1(y)=\cdots=Q_m(y)=0\quad \Longrightarrow\quad P(y)=0,$$
and for $y\in K$ the converse of this implication also holds. 
\end{proof}

\noindent
Here is a characterization of $r$-$\d$-closedness:

\begin{lemma}
The following are equivalent:
\begin{enumerate}
\item[\textup{(i)}] $K$ is $r$-$\d$-closed in $L$;
\item[\textup{(ii)}] there is no differential subfield $E$ of $L$ with $K\subset E$ and $\operatorname{trdeg}(E|K)\le r$. 
\end{enumerate}
\end{lemma}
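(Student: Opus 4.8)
The plan is to prove both implications by contraposition, because each negation comes with a concrete witness. Write $\neg$(i) for ``there is $P\in K\{Y\}^{\ne}$ of order $\le r$ with a zero $y\in L\setminus K$'' and $\neg$(ii) for ``there is a differential subfield $E$ of $L$ with $K\subsetneq E$ and $\operatorname{trdeg}(E|K)\le r$.'' I would establish $\neg$(ii)$\Rightarrow\neg$(i) and $\neg$(i)$\Rightarrow\neg$(ii).

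For $\neg$(ii)$\Rightarrow\neg$(i) (giving (i)$\Rightarrow$(ii)), I would fix such an $E$ and pick any $y\in E\setminus K$. Since $E$ is a differential field, the $r+1$ derivatives $y,y',\dots,y^{(r)}$ all lie in $E$, and $\operatorname{trdeg}(E|K)\le r$ forces them to be algebraically dependent over $K$. This yields a nonzero $P\in K[Y_0,\dots,Y_r]$ with $P(y,y',\dots,y^{(r)})=0$, that is, a nonzero order-$\le r$ differential polynomial over $K$ with the zero $y\in L\setminus K$. This direction is essentially immediate.

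For $\neg$(i)$\Rightarrow\neg$(ii) (giving (ii)$\Rightarrow$(i)), I would take $P$ and $y\in L\setminus K$ as in $\neg$(i) and set $E:=K\langle y\rangle\subseteq L$, so that $K\subsetneq E$; it then remains to bound $\operatorname{trdeg}(E|K)$ by $r$. The substantive point is that a zero of a nonzero order-$\le r$ differential polynomial over $K$ generates a differential extension of transcendence degree $\le r$. To prove this I would let $d\le r$ be minimal such that some nonzero $Q\in K\{Y\}$ of order $d$ vanishes at $y$, chosen of least degree in $Y^{(d)}$, and put $F:=K(y,\dots,y^{(d-1)})$, so $\operatorname{trdeg}(F|K)\le d$. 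Since $Q(y)=0$ involves $Y^{(d)}$ nontrivially, $y^{(d)}$ is algebraic over $F$; differentiating $Q(y)=0$ gives $S_Q(y)\,y^{(d+1)}=-R(y)$ with $R$ of order $\le d$ and separant $S_Q=\partial Q/\partial Y^{(d)}$ of strictly lower rank than $Q$, so $S_Q(y)\ne 0$ by minimality. Iterating, every $y^{(j)}$ is algebraic over $F$, whence $E$ is algebraic over $F$ and $\operatorname{trdeg}(E|K)=\operatorname{trdeg}(F|K)\le d\le r$, contradicting (ii).

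I expect the main obstacle to be precisely this transcendence-degree bound, and within it the nonvanishing $S_Q(y)\ne 0$, which is what propagates algebraicity from $Q(y)=0$ to all higher derivatives; the rank-minimal choice of $Q$ is exactly what secures it, and the fact that the relevant differential polynomials stay nonzero is \textup{[ADH, 4.2.1]}. Alternatively one could simply invoke the standard identity in differential algebra equating $\operatorname{trdeg}(K\langle y\rangle|K)$ with the minimal order of a nonzero annihilator of $y$ over $K$. The degenerate case $r=0$ falls out of \eqref{eq:0-d-closed} and needs no separate treatment.
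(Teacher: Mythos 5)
Your proposal is correct and follows essentially the same route as the paper: both directions rest on the same two facts, namely that any element of a differential subfield of transcendence degree $\le r$ over $K$ is annihilated by a nonzero differential polynomial of order $\le r$ (the paper cites [ADH, 4.1.11] here), and that a zero of such a polynomial generates a differential extension of transcendence degree $\le r$. The only difference is that you prove these standard facts from scratch (via algebraic dependence of $y,\dots,y^{(r)}$ and the separant argument) where the paper simply cites or asserts them.
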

\begin{proof}
Let $E$ be a differential subfield of $L$   with $E\supseteq K$ and $\operatorname{trdeg}(E|K)\le r$. Then for $y\in E$ we have $\operatorname{trdeg}(K\langle y\rangle|K)\le \operatorname{trdeg}(E|K)\le r$,
so~[ADH, 4.1.11]  gives a~$P\in K\{Y\}^{\ne}$ of order~$\le r$
with $P(y)=0$, hence $y\in K$ if $K$ is $r$-$\d$-closed in $L$. This shows (i)~$\Rightarrow$~(ii). For the converse, note that for $P\in K\{Y\}^{\neq}$ of order~$\le r$ and~$y\in L$ with~$P(y)=0$ we have
$K\langle y\rangle=K(y,y',\dots,y^{(r)})$, so $\operatorname{trdeg}(K\langle y\rangle|K)\le r$.
\end{proof}

\begin{cor}\label{transdclosed}
If $K$ is $r$-$\d$-closed in $L$, then $K$ is $r$-$\d$-closed in each differential subfield of $L$ containing $K$.
If $M$ is a differential field extension of $L$ such that $L$ is $r$-$\d$-closed in $M$ and $K$ is $r$-$\d$-closed in $L$, then $K$ is $r$-$\d$-closed in $M$.
\end{cor}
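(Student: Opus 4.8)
The plan is to unwind the definition of $r$-$\d$-closedness directly, tracing a hypothetical zero through the tower of fields; the characterization via transcendence degree from the preceding lemma is available but is not actually needed. For the first assertion, I would let $E$ be a differential subfield of $L$ with $K\subseteq E$, and take $P\in K\{Y\}^{\neq}$ of order $\leq r$ together with a zero $y\in E$. Since $E\subseteq L$, this $y$ is also a zero of $P$ in $L$, so the hypothesis that $K$ is $r$-$\d$-closed in $L$ forces $y\in K$. As $P$ and $y$ were arbitrary, $K$ is $r$-$\d$-closed in $E$. This is pure monotonicity in the ambient field and requires no further input.

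For the second assertion (transitivity), the key observation to record first is that a nonzero differential polynomial over $K$ of order $\leq r$ remains a nonzero differential polynomial over the larger field $L$, of the same order; this is what lets the two closedness hypotheses be applied in succession. Concretely, I would take $P\in K\{Y\}^{\neq}$ of order $\leq r$ and a zero $y\in M$. Viewing $P$ as an element of $L\{Y\}^{\neq}$ of order $\leq r$, the hypothesis that $L$ is $r$-$\d$-closed in $M$ yields $y\in L$. Now $y$ is a zero of $P$ lying in $L$, so the hypothesis that $K$ is $r$-$\d$-closed in $L$ yields $y\in K$, as desired.

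There is essentially no obstacle here beyond the bookkeeping just described: the only point that could cause a slip is forgetting that the coefficients of $P$ stay in the smaller field $K$ when we reinterpret $P$ over $L$, and that the order of $P$ is unaffected by enlarging the coefficient field. Both are immediate. If one preferred a proof routed through the preceding characterization, the same two steps could be phrased contrapositively in terms of differential subfields of transcendence degree $\leq r$ over $K$; but the direct zero-chasing argument is shorter and avoids having to relocate an intermediate subfield inside $L$.
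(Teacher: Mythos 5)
Your proof is correct. The paper states this as a corollary of the preceding transcendence-degree characterization and gives no explicit argument, but your direct unwinding of the definition (a zero of $P\in K\{Y\}^{\neq}$ in a subfield $E\subseteq L$ is a zero in $L$; and for transitivity, $P$ viewed in $L\{Y\}^{\neq}$ has the same order, so the two hypotheses apply in succession) is at least as clean as routing through that lemma, and the two routes are essentially interchangeable here. The one bookkeeping point you flag --- that the order of $P$ is unchanged when the coefficient field is enlarged --- is exactly the right thing to check, and it is immediate.
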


\noindent
We say that $K$ is {\bf differentially closed}  in~$L$ if $K$ is 
$r$-$\d$-closed in~$L$ for each~$r$, and similarly we define when  $K$ is {\bf weakly differentially closed}  in~$L$.
We also use ``$\d$-closed'' to abbreviate ``differentially closed''.
If~$K$, as a differential ring, is existentially closed in~$L$, then $K$ is weakly
$\d$-closed in~$L$.
The elements of $L$ that are $\d$-algebraic over $K$ form
the smallest differential subfield of $L$ containing~$K$ which is  $\d$-closed in $L$; we call it the
{\bf differential closure} (``$\d$-closure'' for short) of~$K$ in $L$.  Thus $K$ is $\d$-closed in $L$ iff   no   $\d$-subfield of $L$
properly containing~$K$ is
 $\d$-algebraic over $K$. For example, if
    $L=K\langle B\rangle$ where~$B\subseteq K$ is $\d$-algebraically independent over $K$ [ADH, p.~205], then $K$ is $\d$-closed in $L$ by Corollary~\ref{transdclosed} and~[ADH, 4.1.5].
This notion of being differentially closed does not seem prominent in the differential algebra literature, though the definition occurs (as ``differentially algebraic closure'') in~\cite[p.~102]{Kolchin-DA}. Here is a useful fact about it: 

\begin{lemma}\label{lem:dc base field ext}
Let $F$ be a differential field extension of $L$ and let $E$ be a subfield of~$F$ containing $K$ such that $E$ is algebraic over $K$ and $F=L(E)$. 
\[
\xymatrix@R-1.5pc@C-1.5pc{& F=L(E) & \\
E \ar@{-}[ur] & & L \ar@{-}[ul] \\
& K \ar@{-}[ul] \ar@{-}[ur] & } 
\]
Then $K$ is $\d$-closed in $L$ iff $E\cap L = K$ and $E$ is $\d$-closed in $F$.
\end{lemma}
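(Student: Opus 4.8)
The plan is to prove both implications, the reverse one being short and the forward one carrying essentially all the weight.

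\medskip\noindent\textbf{The easy direction.} For the ``if'' direction, assume $E\cap L=K$ and $E$ is $\d$-closed in $F$. Given $y\in L$ that is $\d$-algebraic over $K$, it is a fortiori $\d$-algebraic over $E$ and lies in $F$, so $E$ being $\d$-closed in $F$ gives $y\in E$, whence $y\in E\cap L=K$. Thus $K$ is $\d$-closed in $L$. In the ``only if'' direction, assume $K$ is $\d$-closed in $L$; then $K$ is algebraically closed in $L$ by \eqref{eq:0-d-closed}. The equality $E\cap L=K$ is immediate, since any element of $E\cap L$ is algebraic over $K$ and lies in $L$, hence lies in $K$. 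The real content is that $E$ is $\d$-closed in $F$, i.e.\ that any $z\in F=L(E)$ which is $\d$-algebraic over $E$ already lies in $E$. Writing $z$ as a rational expression over $L$ in finitely many $e_1,\dots,e_m\in E$, I replace $E$ by the finite subextension $E_0:=K(e_1,\dots,e_m)$ (a differential subfield, as $\operatorname{char}=0$); since $E_0\subseteq E$ it suffices to land $z$ in $E_0$, and $z$ is still $\d$-algebraic over $E_0$, indeed over $K$, because $E_0$ is algebraic over $K$. So I may assume $E/K$ is finite.

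\medskip\noindent\textbf{Linear disjointness and differential Galois automorphisms.} Because $K$ is algebraically closed in $L$ and $\operatorname{char}=0$, the extension $L/K$ is regular, so $L$ is linearly disjoint over $K$ from every algebraic extension of $K$; in particular from the Galois closure $\hat E$ of $E/K$, and $\hat E\cap L=K$. I pass to $\hat F:=L(\hat E)=L\otimes_K\hat E$, in which $z$ remains $\d$-algebraic over $\hat E\supseteq E$. Each $\sigma\in\Gal(\hat E/K)$ extends uniquely to a field automorphism $\bar\sigma$ of $\hat F$ fixing $L$ pointwise, and $\bar\sigma$ is automatically a \emph{differential} automorphism, since the derivation of the algebraic extension $\hat F$ of $L$ is the unique one prolonging that of $L$. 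Fixing a $K$-basis $e_1,\dots,e_N$ of $\hat E$ (which by linear disjointness is also an $L$-basis of $\hat F$) and writing $z=\sum_j\ell_j e_j$ with $\ell_j\in L$, the matrix $\big(\sigma(e_j)\big)$ is invertible over $\hat E$, so solving the system $\bar\sigma(z)=\sum_j\ell_j\,\sigma(e_j)$ as $\sigma$ ranges over $\Gal(\hat E/K)$ expresses each $\ell_j$ as an $\hat E$-linear combination of the conjugates $\bar\sigma(z)$. Each $\bar\sigma(z)$ is $\d$-algebraic over $\hat E$, hence so is each $\ell_j$, and therefore $\ell_j$ is $\d$-algebraic over $K$; being an element of $L$, it lies in $K$ because $K$ is $\d$-closed in $L$. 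Thus $z=\sum_j\ell_j e_j\in\hat E$. Since also $z\in L(E)$, and a $K$-basis of $E$ extends to the chosen $K$-basis of $\hat E$, comparing coordinates in the $L$-basis $e_1,\dots,e_N$ yields $\hat E\cap L(E)=E$, whence $z\in E$.

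\medskip\noindent\textbf{The main obstacle.} The crux is the coordinate-extraction step. When $E/K$ is not normal there are too few $K$-automorphisms of $E$ to recover the coordinates $\ell_j$ of $z$, so one cannot carry out the averaging inside $F$ itself. Passing to the Galois closure $\hat E$ supplies enough differential automorphisms, and two facts render this harmless: the relation $\hat E\cap L=K$ survives the enlargement (because $K$ is algebraically closed in $L$), so that $K$ being $\d$-closed in $L$ can still be applied to the coordinates; and $\hat E\cap L(E)=E$ by linear disjointness, so that landing $z$ in $\hat E$ actually returns it to $E$. Everything else---the reduction to finite $E/K$, the regularity of $L/K$, and the uniqueness of the prolonged derivation forcing $\bar\sigma$ to be differential---is routine in characteristic zero.
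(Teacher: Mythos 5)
Your proof is correct and follows essentially the same route as the paper's: reduce to $[E:K]<\infty$, use that $K$ being algebraically closed in $L$ forces linear disjointness, and recover the $L$-coordinates of $z$ from its conjugates via the invertibility of the matrix of embedded basis elements, so that these coordinates are $\d$-algebraic over $K$ and hence lie in $K$. The paper works directly with the $n$ distinct embeddings $F\to L^{\operatorname{a}}$ over $L$ and Cramer's rule, which lands $y$ in $E$ immediately and avoids your detour through the Galois closure $\hat E$ and the extra verification $\hat E\cap L(E)=E$; this is only a cosmetic difference.
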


\begin{proof}
Suppose $K$ is $\d$-closed in $L$. Then $K$ is algebraically closed in $L$, so $L$ is linearly disjoint from~$E$ over $K$. (See \cite[Chapter~VIII, \S{}4]{Lang}.) In particular~${E\cap L=K}$.
Now let $y\in F$ be $\d$-algebraic over $E$; we  claim that $y\in E$. Note that $y$ is $\d$-algebraic over~$K$. 
Take a field extension $E_0\subseteq E$ of $K$ with $[E_0:K]<\infty$ (so $E_0$ is a $\d$-subfield of~$E$) such that $y\in L(E_0)$;
replacing~$E$,~$F$ by $E_0$, $L(E_0)$, respectively, we arrange that $n:=[E:K]<\infty$.
Let $b_1,\dots,b_n$ be a basis of the $K$-linear space~$E$;
then~$b_1,\dots,b_n$ is also a basis of the $L$-linear space $F$.
Let $\sigma_1,\dots,\sigma_n$ be the distinct field embeddings~$F\to L^{\operatorname{a}}$ over $L$.
Then the vectors
$$\big(\sigma_1(b_1),\dots,\sigma_1(b_n)\big),\dots, \big(\sigma_n(b_1),\dots,\sigma_n(b_n)\big)\in (L^{\operatorname{a}})^n$$
are $L^{\operatorname{a}}$-linearly independent \cite[Chapter~VI, Theorem~4.1]{Lang}. 
Let $a_1,\dots,a_n\in L$ be such that $y=a_1b_1+\cdots+a_nb_n$. Then
$$\sigma_j(y) = a_1\sigma_j(b_1)+\cdots+a_n\sigma_j(b_n)\quad\text{for $j=1,\dots,n$,}$$
hence by Cramer's Rule, 
$$a_1,\dots,a_n\in K\big(\sigma_j(y),\sigma_j(b_i):i,j=1,\dots,n\big).$$
Therefore $a_1,\dots,a_n$ are $\d$-algebraic over $K$, since    $\sigma_j(y)$ and $\sigma_j(b_i)$ for $i,j=1,\dots,n$ are.
Hence $a_1,\dots,a_n\in K$ since $K$ is $\d$-closed in $L$, so $y\in E$ as claimed. This shows the forward implication. The
backward direction is clear.
\end{proof}

\begin{cor}\label{cor:dc base field ext}
If $-1$ is not a square in $L$ and $\imag$ in a differential field extension of~$L$ satisfies $\imag^2=-1$,
then:  $K$ is $\d$-closed in $L\ \Leftrightarrow\ K[\imag]$ is $\d$-closed in $L[\imag]$.
\end{cor}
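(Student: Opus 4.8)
The plan is to reduce the corollary directly to Lemma~\ref{lem:dc base field ext}, taking $F := L[\imag]$ and $E := K[\imag]$. Since $-1$ is not a square in $L$, it is a fortiori not a square in its subfield $K$; hence $\imag$ has degree $2$ over both $K$ and $L$, so $K[\imag] = K(\imag)$ and $L[\imag] = L(\imag)$ are fields. As $\imag^2 = -1$ and we are in characteristic $0$, differentiating gives $2\imag\,\imag' = 0$, so $\imag' = 0$; thus the derivation of the ambient differential field restricts to $L[\imag]$ (note $(a+b\imag)' = a' + b'\imag$ for $a,b\in L$), making $L[\imag]$ a differential field extension of $L$ and $E = K[\imag]$ a differential subfield of $F$ containing $K$ that is algebraic over $K$, with $L(E) = L(K,\imag) = L[\imag] = F$. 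This verifies all hypotheses of Lemma~\ref{lem:dc base field ext}.

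Next I would check that the extra condition $E \cap L = K$ occurring in that lemma holds automatically in the present situation. An element of $E = K + K\imag$ has the form $a + b\imag$ with $a,b\in K$; if it lies in $L$, then $b\imag\in L$, forcing $b = 0$ because $\imag\notin L$ (as $-1$ is not a square in $L$). Hence $a + b\imag = a\in K$, so $E\cap L = K$.

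With $E\cap L = K$ granted, the biconditional of Lemma~\ref{lem:dc base field ext} collapses to the single equivalence
\[
\text{$K$ is $\d$-closed in $L$}\quad\Longleftrightarrow\quad\text{$E$ is $\d$-closed in $F$,}
\]
which is exactly the asserted equivalence between $K$ being $\d$-closed in $L$ and $K[\imag]$ being $\d$-closed in $L[\imag]$. There is essentially no obstacle here: the entire content is carried by the already-established Lemma~\ref{lem:dc base field ext}, and the only point requiring notice is that the lemma's separate hypothesis $E\cap L = K$ is automatically free in this setting. The one thing worth making explicit is that $\imag\notin L$, which is precisely the role played by the assumption that $-1$ is not a square in $L$.
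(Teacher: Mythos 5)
Your proposal is correct and follows exactly the route the paper intends: the corollary is stated as an immediate consequence of Lemma~\ref{lem:dc base field ext} with $E=K[\imag]$ and $F=L[\imag]$, and the paper gives no separate proof. Your explicit verification that $E\cap L=K$ holds automatically (so that the lemma's biconditional collapses to the asserted equivalence) is precisely the one point that needs checking, and you handle it correctly.
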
 

\noindent
The notion of $\d$-closedness concerns one-variable differential polynomials, but under extra assumptions on the
differential field extension $K\subseteq L$, it has consequences for systems of differential polynomial (in-)equalities in several indeterminates, by the next lemma. First some notation: for $P\in K\{Y\}$, $Y$ a single indeterminate, we let~$\Zero(P)$ denote the set of zeros of $P$ in $K$:
$$\Zero(P)\ =\ \Zero_K(P)\  :=\  \big\{y\in K: P(y)=0\big\}.$$
Let $S\subseteq K^n$ be defined in $K$ by a quantifier-free formula~$\phi(y_1,\dots,y_n)$
in the language of differential fields with names for the elements of $K$, and let 
$S_L\subseteq L^n$ be defined
in~$L$ by the same formula. Then $S=S_L\cap K^n$, and
if~$K$ is existentially closed in~$L$, then $S_L$ does not depend on the choice of $\phi$.  

Suppose $C\ne K$. Then we defined in \cite[Section~1]{ADHdim}
for any set $S\subseteq K^n$ its (differential) dimension $\dim S\in \{-\infty, 0,1,2,\dots,n\}$, with $\dim S=-\infty$ iff $S=\emptyset$. 
If $S\ne\emptyset$ and~$S$ is finite, then $\dim S=0$, and if $S\subseteq K$, then $\dim S=0$
iff~$S\ne\emptyset$ and~$S\subseteq \Zero(P)$ for some~${P\in K\{Y\}^{\ne}}$.

\begin{lemma}\label{lem:dim0}
Suppose that $C\ne K$, that $K$ is both existentially closed in $L$
and $\d$-closed in $L$, and that~$\dim S=0$. Then $S=S_L$.
\end{lemma}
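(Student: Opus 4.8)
The plan is to prove $S_L\subseteq S$; since $S=S_L\cap K^n\subseteq S_L$ already holds, this gives $S=S_L$. Equivalently, I will show that every point of $S_L$ lies in $K^n$. The mechanism is to pin each coordinate of such a point to the zero set of a fixed nonzero one-variable differential polynomial over $K$, and then invoke that $K$ is $\d$-closed in $L$.

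First I would extract from $\dim S=0$ a uniform $\d$-algebraic constraint on each coordinate. Since $\dim S=0$ we have $S\ne\emptyset$, so each coordinate projection $\pi_i(S)\subseteq K$ is nonempty. As $C\ne K$ the dimension of subsets of $K$ is defined, and because projecting onto a coordinate cannot increase the differential dimension (a basic property of the dimension of \cite{ADHdim}), we get $\dim\pi_i(S)\le\dim S=0$, hence $\dim\pi_i(S)=0$. By the stated characterization for subsets of $K$, there is a nonzero $P_i\in K\{Y\}$ with $\pi_i(S)\subseteq\Zero(P_i)$; that is, $P_i(a_i)=0$ for every $a=(a_1,\dots,a_n)\in S$.

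Next I transfer this constraint to $S_L$ using that $K$ is existentially closed in $L$. Fix $b=(b_1,\dots,b_n)\in S_L$ and an index $i$, and suppose toward a contradiction that $P_i(b_i)\ne 0$. Then the existential sentence $\exists y_1\cdots\exists y_n\,\big(\phi(y_1,\dots,y_n)\wedge P_i(y_i)\ne 0\big)$, with parameters in $K$, holds in $L$, witnessed by $b$ (recall $b\in S_L$ means $L\models\phi(b)$). Since $K$ is existentially closed in $L$, this sentence also holds in $K$, producing $a\in S$ with $P_i(a_i)\ne 0$, contradicting $P_i(a_i)=0$ from the previous step. Hence $P_i(b_i)=0$ for each $i$.

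Finally, each $b_i\in L$ is a zero of the nonzero $P_i\in K\{Y\}$, so $b_i\in K$ because $K$ is $\d$-closed in $L$. Thus $b\in K^n$, whence $b\in S_L\cap K^n=S$. As $b\in S_L$ was arbitrary, $S_L\subseteq S$, and therefore $S=S_L$. The crux of the argument is the existential-closedness transfer in the third paragraph, which is short once set up; the only external input to pin down carefully is the monotonicity of the differential dimension under coordinate projections used to obtain the polynomials $P_i$, everything else being a direct application of the hypotheses.
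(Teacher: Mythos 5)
Your proof is correct and follows essentially the same route as the paper: both extract nonzero $P_i\in K\{Y\}$ with $\pi_i(S)\subseteq\Zero(P_i)$ from $\dim S=0$, transfer the inclusion $S\subseteq\Zero(P_1)\times\cdots\times\Zero(P_n)$ to $S_L$ using existential closedness, and then apply $\d$-closedness of $K$ in $L$ coordinatewise. The only cosmetic difference is that the paper realizes the transfer by embedding $L$ into an elementary extension $K^*\succeq K$ (via [ADH, B.8.5]) and restricting, whereas you push the negated existential statement down directly; these are two phrasings of the same model-theoretic step.
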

\begin{proof}
Since $K$ is existentially closed in $L$,   [ADH, B.8.5] yields a differential field  extension $K^*$ of $L$ such that $K\preceq K^*$. 
For $i=1,\dots,n$ and the $i$th coordinate projection $\pi_i\colon K^n\to K$ we have~$\dim \pi_i(S)=0$
and therefore $\pi_i(S)\subseteq\Zero(P_i)$ with~$P_i\in K\{Y\}^{\ne}$, by \cite[Lemma~1.2]{ADHdim}, so~$S\subseteq\Zero(P_1)\times\cdots\times\Zero(P_n)$, hence 
\begin{align*}
S_{K^*}&\subseteq \Zero_{K^*}(P_1)\times\cdots\times\Zero_{K^*}(P_n)&&\text{since $K\preceq K^*$, and thus} \\
S_L=S_{K^*}\cap L^n&\subseteq \Zero_{L}(P_1)\times\cdots\times\Zero_{L}(P_n)\subseteq K^n&&\text{since~$K$ is $\d$-closed in $L$.}
\end{align*}
Thus $S=S_L$.
\end{proof}

\noindent
Recall from [ADH, 4.7]  that $K$ is said to be {\it $\d$-closed}\/ if for all $P\in K[Y,\dots, Y^{(r)}]^{\ne}$ and
$Q\in K[Y,\dots, Y^{(r-1)}]^{\ne}$ such that $Y^{(r)}$ occurs in $P$
there is a~$y\in K$ such that~$P(y)=0$ and $Q(y)\ne 0$. (For $r=0$ this says that $K$ is algebraically closed.) 
If~$K$ is $\d$-closed in~$L$ and $L$ is $\d$-closed, then $K$ is $\d$-closed. Therefore, since
  $K$ has a $\d$-closed differential field extension
(cf.~[ADH, remark after 4.7.1]): if $K$ is $\d$-closed in each  differential field extension, then $K$ is $\d$-closed.
The reverse implication, however, is false:
if $C$ is algebraically closed,
then $K$ has a proper $\d$-algebraic differential field extension $L$, even with~${C_L=C}$. To see this we use the 
theorem of Rosenlicht~\cite[Proposition~2]{Rosenlicht74} below.  We equip the field $C(Y)$ of rational functions over the constant field $C$ of $K$ with the derivation~$\partial=\partial/\partial Y$~[ADH, p.~200].

\begin{theorem}[Rosenlicht]\label{thm:RosenlichtOrder1}
Let $R\in C(Y)^\times$ be such that $1/R$ is neither of the form~$a\,\frac{\partial U}{U}$ $(a\in C, U\in C(Y)^\times)$, nor of the form
$\partial V$ $(V\in C(Y)^\times)$.
Let $y$ be an element of a differential field extension of $K$ such that $y$ is transcendental over $K$ and $y'=R(y)$.   Then $C_{K\<y\>}=C$.
\end{theorem}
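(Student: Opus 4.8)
The plan is to prove the contrapositive: assuming $C_{K\langle y\rangle}\ne C$, I will show that $1/R$ has one of the two excluded shapes. First note that $K\langle y\rangle=K(y)$, since $y'=R(y)\in K(y)$, and that any constant of $K(y)$ lying in $K$ already lies in $C=C_K$; hence $K\cap C_{K(y)}=C$, so there is $f\in K(y)\setminus K$ with $f'=0$. Writing elements of $K(y)$ as rational functions in the transcendental $y$, the derivation acts by $g\mapsto g'=g^{[\partial]}+R\,\partial g/\partial Y$, where $g^{[\partial]}$ applies the derivation of $K$ to the coefficients; the condition $f'=0$ then reads $f^{[\partial]}+R\,(\partial f/\partial Y)=0$, and $f\notin K$ forces $\partial f/\partial Y\ne 0$.

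Next I would pass to an algebraic closure $\bar K$ of $K$ with its unique derivation, whose constant field is $\bar C$ (the algebraic closure of $C$), and factor $f=\lambda\prod_i(y-\beta_i)^{n_i}$ with $\lambda\in\bar K^\times$, distinct $\beta_i\in\bar K$, and $n_i\in\Z^{\neq}$ (there is at least one factor, since $K$ is algebraically closed in $K(y)$). Computing the residue of $f^\dagger=0$ at each $y=\beta_i$ gives $n_i\big(R(\beta_i)-\beta_i'\big)=0$, so every zero and pole of $f$ satisfies the same equation $\beta_i'=R(\beta_i)$ as $y$ does. Feeding this back into $f^\dagger=\lambda^\dagger+\sum_i n_i\,(R(y)-R(\beta_i))/(y-\beta_i)=0$ shows that the rational function $R\rho-\tau$ is independent of $y$ and equals $-\lambda^\dagger$, where $\rho:=\sum_i n_i/(y-\beta_i)=(\partial f/\partial Y)/f$ and $\tau:=\sum_i n_iR(\beta_i)/(y-\beta_i)$. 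Equivalently, setting $\omega:=dY/R$, one obtains the key identity $df/f=(\tau-\lambda^\dagger)\,\omega$ of rational differentials on the projective line over $\bar C$.

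The heart of the argument is then a residue analysis of $\omega$ on $\mathbb{P}^1_{\bar C}$. Since $df/f$ has only simple poles, with integer residues $n_i=\operatorname{ord}_{\beta_i}(f)$, the identity $df/f=(\tau-\lambda^\dagger)\,\omega$ lets me read off the residues and principal parts of $\omega$ at its poles (the zeros of $R$, together with the point at infinity). If $\lambda^\dagger=0$, one finds that $\omega$ has vanishing residue at every point, hence is exact on $\mathbb{P}^1_{\bar C}$, so $1/R=\partial V$ for some rational $V$. If $\lambda^\dagger\ne 0$, one finds that $\omega$ has only simple poles with residues in $a\Z$, where $a:=-1/\lambda^\dagger$, so that $\omega=a\,dU/U$ with $U=\prod_\alpha(Y-\alpha)^{\operatorname{ord}_\alpha}$ ranging over the poles, i.e. $1/R=a\,U'/U$. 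Finally I would descend from $\bar C$ to $C$: the residues and principal parts are permuted by $\operatorname{Gal}(\bar C/C)$ and $1/R\in C(Y)$, so in the first case $V$ can be chosen in $C(Y)$, and in the second $a\in C$ and $U\in C(Y)$ (after grouping Galois orbits, comparing the equal orders at conjugate poles forces $\sigma(a)=a$ for all $\sigma$).

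I expect the residue bookkeeping in the third paragraph to be the main obstacle. One must track carefully which poles of $\omega$ are also zeros or poles of $f$ and which are not, handle the higher-order poles of $R$ (where $\omega$ has a pole of order $\ge 2$ but may still carry zero residue), and treat the point at infinity on the same footing, in order both to rule out a polynomial part of $1/R$ in the logarithmic case and to certify genuine exactness in the other. The subsidiary descent step, matching the field of definition of $a$ and $U$ (respectively $V$) with $C$ rather than $\bar C$, is routine but needs the Galois-invariance of the residue data spelled out.
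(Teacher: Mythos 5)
First, a point of order: the paper does not prove this statement at all --- it is quoted, with attribution, as Proposition~2 of Rosenlicht's \emph{The nonminimality of the differential closure} [Rosenlicht74] --- so there is no in-paper proof to compare against; what follows assesses your proposal on its own terms. Your first two paragraphs are a sound start and follow the standard Rosenlicht-style template: reduction to a constant $f\in K(y)\setminus K$, factorization $f=\lambda\prod_i(y-\beta_i)^{n_i}$ over $\bar{K}$, the residue computation giving $\beta_i'=R(\beta_i)$ (you should also rule out that some $\beta_i$ is a pole of $R$, by inspecting the top-order term of the pole there), and the identity $R\rho=\tau-\lambda^\dagger$.

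The third paragraph, however, contains a genuine error, not just unfinished bookkeeping. The proposed dichotomy --- $\lambda^\dagger=0$ forces $1/R=\partial V$, while $\lambda^\dagger\ne 0$ forces $1/R=a\,\partial U/U$ with $a=-1/\lambda^\dagger$ --- is false. Two counterexamples inside your own framework: (i) for $R=Y^2$ and $K=C(x)$ with $x'=1$, the new constant $f=x+y^{-1}=x\,(y+x^{-1})\,y^{-1}$ has $\lambda=x$ and $\lambda^\dagger=x^{-1}\ne 0$, yet $1/R=Y^{-2}=\partial(-Y^{-1})$ is exact, has a double pole with zero residue, and is not of the form $a\,\partial U/U$ for any $a$ (your $a=-x$ is not even a constant); (ii) for $R=Y$ and $K=C(t)$ with $t'=t$, the new constant $f=(y-t)/(y-2t)$ has $\lambda=1$ and $\lambda^\dagger=0$, yet $1/R=Y^{-1}=\partial Y/Y$ has a nonzero residue and is not exact. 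The root of the problem is that your ``identity of differentials on $\mathbb{P}^1_{\bar{C}}$'' is really an identity over $\bar{K}$: the poles $\beta_i$ of $df/f$, and the quantities $\tau$ and $\lambda^\dagger$, live in $\bar{K}$, not in $\bar{C}$, and the poles of $\omega=dY/R$ (the zeros of $R$ and possibly $\infty$, all $\bar{C}$-points) meet the $\beta_i$ only where $R(\beta_i)=\beta_i'=0$. Extracting the Laurent data of $1/R$ at its $\bar{C}$-poles from $1/R=\rho/(\tau-\lambda^\dagger)$ --- for instance, at a simple zero $\gamma=\beta_i$ of $R$ the residue is $n_i/(\tau(\beta_i)-\lambda^\dagger)$, which involves $\tau(\beta_i)=\sum_{j\ne i}n_j\beta_j'/(\beta_i-\beta_j)$ and not just $\lambda^\dagger$ --- and showing that this data assembles into one of the two excluded shapes over $C(Y)$ is the actual content of the proposition; it is not delivered by your case split on $\lambda^\dagger$. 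For the correct continuation you should consult the proof of [Rosenlicht74, Proposition~2].
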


\noindent
 Let $c\in C$ and consider the differential polynomial $P=P_c\in K\{Y\}$ and the rational function $R=R_c\in C(Y)$ given by 
$$P(Y) := Y'\big( c(Y+1)+Y \big) - Y(Y+1)\in K\{Y\},\qquad R(Y):=\frac{Y(Y+1)}{c(Y+1)+Y}\in C(Y).$$
For $y\notin K$ in a differential field extension of $K$  we have $P(y)=0$ iff~${y'=R(y)}$.
We identify $\Q$ with the prime field of $C$, as usual. {\it Suppose $c\notin\Q$.}\/ Then $R$ satisfies the hypotheses of Theorem~\ref{thm:RosenlichtOrder1}: First, 
$$\frac{1}{R}\  =\  \frac{1}{Y+1}+\frac{c}{Y}.$$
Next, if $\frac{1}{R} = a\,\frac{\partial U}{U}$, $a\in C^\times$, $U\in C(Y)^\times$,
then by [ADH, 4.1.3] and \cite[Chapter~V, Theorem~5.2]{Lang} both $1/a$ and~$c/a$ are integers, hence
$c\in\Q$, a contradiction.  
Equip~$E:=C(Y)$ with  
the valuation $v\colon E^\times\to\Z$ with $v(C^\times)=\{0\}$ and $v(Y)=1$. Then~$E$ 
is  $\d$-valued   with $\Psi_E=\{-1\}$. Since $v(1/R)=-1$, there is also no~$V\in E^\times$ with~$\partial V=1/R$. For the constant field $C$ of $K$ this yields by Theorem~\ref{thm:RosenlichtOrder1}:

\begin{cor}\label{cor:RosenlichtOrder1}
Let $c\in C\setminus \Q$. Then $P(y)=0$ for some $y$ in a differential field extension of $K$ and transcendental over $K$; for any such $y$ we have $C_{K\<y\>}=C$.
\end{cor}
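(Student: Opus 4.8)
The plan is to treat the two assertions of the corollary separately. The statement about the constant field is an immediate application of Theorem~\ref{thm:RosenlichtOrder1}, whose two hypotheses on $R$ were already verified in the discussion preceding the corollary; the only thing left to supply is the existence of a transcendental solution, which I would produce by an explicit construction.

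For existence, I would build the extension as a purely transcendental one. Since $c\notin\Q$ we have $c\neq -1$, so $(c+1)Y+c$ is a nonzero polynomial; hence in a field $K(Z)$ with $Z$ transcendental over $K$, the element $(c+1)Z+c$ is a unit, and $R(Z)=Z(Z+1)/\big((c+1)Z+c\big)$ is a well-defined element of $K(Z)$. The derivation of $K$ extends to a derivation of $K(Z)$ taking any prescribed value at $Z$; choosing $Z':=R(Z)$ turns $K(Z)$ into a differential field extension of $K$ in which $y:=Z$ is transcendental over $K$ and satisfies $y'=R(y)$. As $y\notin K$, the equivalence recorded before the corollary gives $P(y)=0$, so such a $y$ exists.

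For the constant field, let $y$ now be \emph{any} element, transcendental over $K$, of a differential field extension of $K$ with $P(y)=0$. Since $y\notin K$ this forces $y'=R(y)$. The preceding paragraph established that, for $c\notin\Q$, the function $1/R=\frac1{Y+1}+\frac cY$ is neither of the form $a\,\frac{\partial U}{U}$ (which would force $c\in\Q$) nor of the form $\partial V$ (as $v(1/R)=-1$ for the valuation $v$ on $C(Y)$ with $v(Y)=1$), so $R$ meets the hypotheses of Theorem~\ref{thm:RosenlichtOrder1}. That theorem then yields $C_{K\langle y\rangle}=C$.

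No step presents a genuine obstacle: the constant-field claim is a direct citation, and the existence is the routine extension of a derivation along a purely transcendental extension. The single point worth flagging is that $c\neq -1$ is exactly what makes $R(Z)$ lie in $K(Z)$, so that the condition $Z'=R(Z)$ can be imposed at all.
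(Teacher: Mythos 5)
Your argument is correct and takes essentially the same route as the paper: the paper also obtains the constant-field claim as a direct application of Theorem~\ref{thm:RosenlichtOrder1}, whose hypotheses on $R$ were verified in the paragraph preceding the corollary, and leaves the existence of a transcendental solution to the standard construction of extending the derivation to $K(Z)$ by prescribing $Z'=R(Z)$, which you spell out explicitly. (One minor inaccuracy in your closing remark: the denominator $c(Y+1)+Y=(c+1)Y+c$ is a nonzero element of $C[Y]$ for \emph{every} $c$, since $c+1$ and $c$ cannot both vanish, so $c\neq-1$ is not actually what makes $R(Z)$ well defined -- though of course $c\notin\Q$ gives $c\neq-1$ anyway.)
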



\section{Revisiting [ADH, 16.0.3]}\label{sec:revisiting}

\noindent
The following important fact about closed $H$-fields stands in marked contrast to Corollary~\ref{cor:RosenlichtOrder1}. It is [ADH, 16.0.3], and was applied in \cite{ADHdim} to gain information about the zero sets of differential polynomials in closed $H$-fields:

\begin{theorem} \label{thm:16.0.3}
Every closed $H$-field is $\d$-closed in every  $H$-field extension with the same constant field.
\end{theorem}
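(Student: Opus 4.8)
The plan is to reformulate the assertion as a maximality statement and then knock out the possible enlargements one type at a time, using the four properties bundled into ``closed $H$-field'': such a $K$ is Liouville closed (hence real closed, with real closed constant field, and $\d$-valued, so $\res K = C$), $\upo$-free, and newtonian. Write $C:=C_K=C_L$. By definition, $K$ is $\d$-closed in $L$ exactly when every $y\in L$ that is $\d$-algebraic over $K$ already lies in $K$. So suppose toward a contradiction that some such $y$ lies outside $K$. Then $K\<y\>$ is a differential subfield of the $H$-field $L$ containing $K$, hence an $H$-field extension of $K$; it is $\d$-algebraic over $K$; and $C\subseteq C_{K\<y\>}\subseteq C_L=C$ forces $C_{K\<y\>}=C$. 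Thus the theorem reduces to the claim $(*)$: a closed $H$-field has no proper $\d$-algebraic $H$-field extension with the same constant field, and for $(*)$ we may take the extension singly generated, $L=K\<y\>$ with $y\notin K$.

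Next I would split on whether $K\<y\>\mid K$ is \emph{immediate}, i.e.\ has the same value group and residue field. In the immediate case the deep input applies at once: a $\upo$-free newtonian $H$-field admits no proper $\d$-algebraic immediate extension, so $K\<y\>=K$, a contradiction. In the non-immediate case the value group $\Gamma$ or the residue field of $K\<y\>$ properly enlarges that of $K$, and here I would invoke the structure theory of $H$-field extensions from [ADH]. Because $K$ is real closed (so $\Gamma$ is divisible and $\res K=C$ is real closed), Liouville closed, and $\upo$-free (so it has asymptotic integration and its asymptotic couple is already closed under the operations a $\d$-algebraic element can induce), any genuine enlargement of the value group or residue field produced by a $\d$-algebraic generator must be accompanied by a \emph{new constant}. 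Since $C_{K\<y\>}=C$, no such enlargement can occur, so $K\<y\>\mid K$ is immediate after all, returning us to the first case and completing $(*)$.

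The main obstacle is precisely the immediate case, i.e.\ the theorem that $\upo$-freeness together with newtonianity precludes proper $\d$-algebraic immediate extensions; this is exactly the purpose for which newtonianity was engineered, and its proof rests on the Newton-diagram and newton-degree machinery of [ADH] (pc-sequences of $\d$-algebraic type, their minimal differential polynomials, and the solvability of the associated quasi-linear equations). The second delicate point is the bookkeeping of the constant field: the hypothesis $C_L=C_K$ is exactly what licenses each appeal to Liouville closedness and real closedness in the non-immediate case, and it is what separates the present situation from the Rosenlicht-type phenomenon of Corollary~\ref{cor:RosenlichtOrder1}, where a proper $\d$-algebraic extension with \emph{unchanged} constants does exist over a base that is neither Liouville closed nor newtonian. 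The remaining reductions---to a single generator, the normalizations by scaling and translation used to analyze the behavior of $y$ over $K$, and the verification that $\res K=C$ for a closed $H$-field---are routine.
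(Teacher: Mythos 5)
Your first reduction and your ``immediate case'' are fine: a closed $H$-field is $\upo$-free and newtonian, hence asymptotically $\d$-algebraically maximal, so it admits no proper $\d$-algebraic immediate extension. (Note, though, that this theorem is quoted from [ADH, 16.0.3] rather than reproved here; what the paper does exhibit is the proof mechanism, in the form of Theorem~\ref{noextension} and Lemma~\ref{lem:ADH 16.1.1, Calinfi}.)

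The genuine gap is your non-immediate case. The sentence asserting that ``any genuine enlargement of the value group \dots{} produced by a $\d$-algebraic generator must be accompanied by a new constant'' is, given the immediate case, exactly the remaining content of the theorem, and the parenthetical justification (the asymptotic couple being ``closed under the operations a $\d$-algebraic element can induce'') is not an argument; as it stands the step is circular. The actual mechanism is a different dichotomy. One does \emph{not} show that a non-immediate $\d$-algebraic extension creates a constant; one shows that if \emph{no} $y\in K\langle f\rangle\setminus K$ generates an immediate extension of $K$ (not merely that $K\langle f\rangle$ itself is non-immediate --- your single-generator framing loses this), then the recursion $f_0=f$, $f_{n+1}=(f_n-b_n)^\dagger$ with $b_n$ a best approximation to $f_n$ in $K$ produces valuations $v(f_n-b_n)$ that are $\Q$-linearly independent over $\Gamma$, so that $\Q\Gamma_{K\langle f\rangle}/\Gamma$ is infinite-dimensional; this contradicts $\operatorname{trdeg}\big(K\langle f\rangle|K\big)<\infty$ for $\d$-algebraic $f$. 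In other words, the value group \emph{can} a priori grow without new constants appearing --- what is ruled out is that it grows by only finitely many $\Q$-linearly independent values. The linear independence itself is the hard computation (via the functions $\psi_{\alpha_1,\dots,\alpha_n}$ and the closedness of the asymptotic couple of the universal exponential extension), and the hypothesis $C_L=C_K$ is used not where you place it but to guarantee that best approximations satisfy $v(f_n-b_n)\notin\Gamma$ and that $f_{n+1}=(f_n-b_n)^\dagger\notin K$ (via $L^\dagger\cap K=K^\dagger$). Without supplying this argument, or an alternative one, the proof is incomplete precisely at its essential point.
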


\noindent
Any pre-$H$-field extension of an $H$-field with the same residue field
is automatically an $H$-field  with the same constant field, so in 
Theorem ~\ref{thm:16.0.3} we can replace ``$H$-field extension with the same constant field''  by
``pre-$H$-field extension with the same residue field''.
But we cannot further weaken this to ``pre-$H$-field extension with the same constant field'' as we show
in the next subsection. (This justifies a remark in~[ADH, ``Notes and comments'' on p.~684].)

\subsection*{How not to use Theorem~\ref{thm:16.0.3}}
We  work in the differential field $\T$ of transseries. Let $c\in\R^>$, and let $P=P_c\in\R\{Y\}$ be as introduced at the end
of the previous section\endnote{The differential polynomial $P_c$ studied here was inspired by an example
attributed to McGrail and Marker in \cite[Example~2.20]{HrushovskiItai}, and suggested to us by James Freitag.}. Note that $P(0)=P(-1)=0$. Let $y\in\T\setminus\{0,-1\}$ and put 
$$U(y)\ :=\ \abs{y}^{c}(y+1)\in \T^\times,$$
so
$$U(y)^\dagger\  =\   c\frac{y'}{y}+\frac{y'}{y+1}\  =\  y'\cdot \frac{c(y+1)+y}{y(y+1)}$$
and therefore 
\begin{align*}
P(y)=0 &\quad\Longleftrightarrow\quad U(y)^\dagger=1 \\ &\quad\Longleftrightarrow\quad U(y)\in \R^\times\ex^x \\
 &\quad\Longleftrightarrow\quad  \abs{y}^c(y+1) = a\ex^x \text{ for some $a\in \R^\times$.}
\end{align*}
Hence $y\succ 1$ whenever $P(y)=0$. More precisely:

\begin{lemma}\label{pyo}
Let $a\in\R^\times$, $y\in \T^\times$ with  $\abs{y}^c(y+1) = a\ex^x$. 
Then  $y \sim b \ex^{x/(c+1)}$ where $b=a^{1/(c+1)}$ if $a>0$ and $b=-(-a)^{1/(c+1)}$ if $a<0$.
\end{lemma}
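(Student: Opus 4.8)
The plan is to build on the observation made just before the lemma that $y \succ 1$ for any $y \in \T^\times$ with $|y|^c(y+1) = a\ex^x$: since $a\ex^x \succ 1$, the relation $y \preceq 1$ would give $|y|^c(y+1) \preceq 1$, a contradiction. Because $y \succ 1$ in $\T$, the element $y$ has a definite sign and satisfies $y+1 \sim y$. I would therefore split the argument according to whether $y > 0$ or $y < 0$ and, in each case, reduce the defining equation to a clean asymptotic equivalence by replacing $y+1$ with $y$.

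First I would treat $y > 0$. Then $|y| = y$, so $|y|^c(y+1) \sim y^c \cdot y = y^{c+1}$, and hence $y^{c+1} \sim a\ex^x$; since $y^{c+1} > 0$ this forces $a > 0$. For $y < 0$ I would set $w := -y \in \T^>$, so $|y|^c(y+1) \sim w^c \cdot y = -w^{c+1}$, giving $w^{c+1} \sim (-a)\ex^x$ and forcing $a < 0$. Thus the sign of $a$ is exactly correlated with that of $y$, which accounts for the case split in the statement, and in both cases I have reduced to a relation of the form $u^{c+1} \sim |a|\ex^x$ with $u \in \{y, -y\} \subseteq \T^>$.

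It then remains to take $(c+1)$-th roots. The key point is that the real power map $g \mapsto g^{1/(c+1)}$ on $\T^>$ preserves $\sim$: writing $g = h(1+\epsilon)$ with $\epsilon \prec 1$ when $g \sim h$, the binomial identity $(1+\epsilon)^{1/(c+1)} = \sum_n \binom{1/(c+1)}{n}\epsilon^n$ established before Corollary~\ref{cor:hens, 2} shows $(1+\epsilon)^{1/(c+1)} \sim 1$, since its terms of degree $\geq 1$ are $\prec 1$; hence $g^{1/(c+1)} \sim h^{1/(c+1)}$. Applying this to $u^{c+1} \sim |a|\ex^x$, together with $(\ex^x)^{1/(c+1)} = \ex^{x/(c+1)}$ and multiplicativity of real powers, gives $u \sim |a|^{1/(c+1)}\ex^{x/(c+1)}$, which unwinds to $y \sim a^{1/(c+1)}\ex^{x/(c+1)}$ for $a > 0$ and $y \sim -(-a)^{1/(c+1)}\ex^{x/(c+1)}$ for $a < 0$, as asserted.

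The one step needing genuine care, rather than routine sign bookkeeping, is this preservation of $\sim$ under the real exponent $1/(c+1)$; in particular one must apply the binomial expansion inside a Hahn subfield of $\T$ over $\R$ containing $\epsilon$, so that the convergence justification recorded before Corollary~\ref{cor:hens, 2} is in force.
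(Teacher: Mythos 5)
Your proof is correct and follows essentially the same route as the paper's: use $y\succ 1$ to replace $y+1$ by $y$ (the paper writes $\abs{y}^c y\sim\abs{y}^c(1+y)=a\ex^x$), match the sign of $y$ with that of $a$, and extract the $(c+1)$-th root. The only difference is that you spell out the (routine but worth noting) fact that real powers on $\T^>$ preserve $\sim$, which the paper leaves implicit.
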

\begin{proof}
Note that $\abs{y}^c y \sim \abs{y}^c(1+y) = a\ex^x$.
Hence if $a>0$, then $y>0$ and~$y\sim b\ex^{x/(c+1)}$
with~$b:=a^{1/(c+1)}$, and if $a<0$, then $y<0$ and $(-y)^{c+1}\sim -a\ex^x$,
thus~$y\sim b\ex^{x/(c+1)}$ for~$b:=-(-a)^{1/(c+1)}$.
\end{proof}

\noindent
In the next lemma we let $b\in \R^\times$, and for $z\in\T$, $z\prec 1$ we set
$$Q(z)\ :=\  (1+z)^c ( 1+\varepsilon +z)-1\qquad\text{where  $\varepsilon:=b^{-1}\ex^{-x/(c+1)}\prec 1$.}$$

\begin{lemma}
Let $y=b\ex^{x/(c+1)}(1+z)$ where $z\prec 1$. Then $P(y)=0\Longleftrightarrow Q(z)=0$.
\end{lemma}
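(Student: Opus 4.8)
The plan is to reduce the desired equivalence to the characterization established just before Lemma~\ref{pyo}: for $y\in\T\setminus\{0,-1\}$ one has $P(y)=0$ iff $\abs{y}^c(y+1)=a\ex^x$ for some $a\in\R^\times$. With $y=b\ex^{x/(c+1)}(1+z)$, $b\in\R^\times$ and $z\prec1$, we have $1+z\sim1>0$ and $y\asymp\ex^{x/(c+1)}$, so $y\in\T^\times$ with $y\neq0,-1$; thus the characterization applies to our $y$.

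First I would compute $\abs{y}^c(y+1)$ explicitly in terms of $z$. Since $1+z>0$, we get $\abs{y}=\abs{b}\,\ex^{x/(c+1)}(1+z)$, whence $\abs{y}^c=\abs{b}^c\,\ex^{cx/(c+1)}(1+z)^c$ by multiplicativity of the power function on $\T^>$. Factoring $b\ex^{x/(c+1)}$ out of $y+1$ gives exactly $y+1=b\ex^{x/(c+1)}(1+\varepsilon+z)$ with $\varepsilon=b^{-1}\ex^{-x/(c+1)}$ as in the statement. Multiplying the two and combining $\ex^{cx/(c+1)}\ex^{x/(c+1)}=\ex^x$ then yields
\[
\abs{y}^c(y+1)\ =\ a_0\,(Q(z)+1)\,\ex^x,\qquad a_0:=\abs{b}^c\,b\in\R^\times,
\]
where I used $(1+z)^c(1+\varepsilon+z)=Q(z)+1$.

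Now the equivalence reads off quickly. By the characterization, $P(y)=0$ holds iff $\abs{y}^c(y+1)/\ex^x=a_0(Q(z)+1)$ is a nonzero real; since $a_0\in\R^\times$, this is equivalent to $Q(z)+1\in\R^\times$. On the other hand, as $z,\varepsilon\prec1$, both $(1+z)^c$ and $1+\varepsilon+z$ are $\sim1$, so their product differs from $1$ by an infinitesimal, i.e.\ $Q(z)\prec1$. Since the only element of $\R$ that is $\prec1$ in $\T$ is $0$, the condition $Q(z)+1\in\R^\times$ forces $Q(z)=0$; and conversely $Q(z)=0$ gives $Q(z)+1=1\in\R^\times$. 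Hence $P(y)=0\Longleftrightarrow Q(z)=0$.

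The computation is the bulk of the work, but it is entirely routine; I expect the only points requiring a word of justification to be the multiplicativity $\abs{y}^c=\abs{b}^c\ex^{cx/(c+1)}(1+z)^c$ of the real power function on $\T^>$ and the bookkeeping that the two exponents add to exactly $\ex^x$. The genuine conceptual step --- and what makes the statement clean --- is the last observation that a real constant lying in the maximal ideal (i.e.\ $\prec1$) must vanish, which is what upgrades ``$Q(z)+1$ is a nonzero real'' to the equality $Q(z)=0$.
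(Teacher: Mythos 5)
Your proof is correct and follows essentially the same route as the paper: both compute $\abs{y}^c(y+1)=\abs{b}^c b\,\ex^x\big(Q(z)+1\big)$ and then use $Q(z)\prec 1$ (equivalently $Q(z)+1\sim 1$) together with the characterization $P(y)=0\Leftrightarrow\abs{y}^c(y+1)=a\ex^x$ for some $a\in\R^\times$ to force $Q(z)=0$. Your write-up merely makes explicit the step the paper leaves implicit, namely that a real constant which is $\prec 1$ must vanish.
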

\begin{proof}
We have $\abs{y}^c(y+1)=\abs{b}^c b\ex^x \big(Q(z)+1\big)$ and $Q(z)+1\sim 1$.
Assume $P(y)=0$, and
take~$a\in\R^\times$ such that~$\abs{y}^c(y+1)=a\ex^x$.
Then $a=\abs{b}^{c}b$ and $Q(z)=0$. Conversely, if $Q(z)=0$, then  $\abs{y}^c(y+1)=a\ex^x$ for $a:=\abs{b}^c b$ and so $P(y)=0$.
\end{proof}

\noindent
Combining Corollary~\ref{cor:hens, 2} with the previous lemma yields:

\begin{cor}\label{cor:zeros of P}
For each $b\in\R^\times$ there is a unique $y\in\T^\times$ such that $P(y)=0$ and~$y\sim b\ex^{x/(c+1)}$.
\end{cor}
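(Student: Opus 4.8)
The plan is to transport the existence-and-uniqueness statement for zeros of $Q$ furnished by Corollary~\ref{cor:hens, 2} across the substitution $y = b\ex^{x/(c+1)}(1+z)$, using the preceding lemma to match zeros of $P$ with zeros of $Q$. Throughout I would fix $b\in\R^\times$ and keep the notation of that lemma, so $\varepsilon := b^{-1}\ex^{-x/(c+1)}$ and $Q(z) := (1+z)^c(1+\varepsilon+z)-1$. Note that $\varepsilon\asymp \ex^{-x/(c+1)}\prec 1$, so $\varepsilon\in\smallo_{\T}$, and that $c\neq -1$ since $c>0$; hence Corollary~\ref{cor:hens, 2} is applicable.

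For existence, I would invoke Corollary~\ref{cor:hens, 2} to obtain the unique $z\in\smallo_{\T}$ with $(1+z)^c(1+\varepsilon+z)=1$, i.e.\ $Q(z)=0$, and then set $y := b\ex^{x/(c+1)}(1+z)$. Since $z\prec 1$ we have $1+z\sim 1$, whence $y\sim b\ex^{x/(c+1)}$ and in particular $y\in\T^\times$; the preceding lemma then converts $Q(z)=0$ into $P(y)=0$.

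For uniqueness, suppose $y\in\T^\times$ satisfies $P(y)=0$ and $y\sim b\ex^{x/(c+1)}$. I would set $z := b^{-1}\ex^{-x/(c+1)}y - 1$, so that $y = b\ex^{x/(c+1)}(1+z)$, and observe that $y\sim b\ex^{x/(c+1)}$ says precisely that $z\prec 1$. Hence the preceding lemma applies and turns $P(y)=0$ back into $Q(z)=0$, i.e.\ $(1+z)^c(1+\varepsilon+z)=1$. The uniqueness clause of Corollary~\ref{cor:hens, 2} then pins down $z$, and therefore $y$.

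I do not expect a genuine obstacle here: the whole argument is the observation that the affine substitution $z\mapsto y=b\ex^{x/(c+1)}(1+z)$ restricts to a bijection between $\smallo_{\T}$ and $\{\,y\in\T^\times : y\sim b\ex^{x/(c+1)}\,\}$ under which the zero set of $Q$ corresponds to that of $P$. The only two points that deserve a word are that $\varepsilon\in\smallo_{\T}$ (needed to apply Corollary~\ref{cor:hens, 2}) and that $y\sim b\ex^{x/(c+1)}$ is equivalent to $z\prec 1$ (needed to invoke the preceding lemma in both directions); both are immediate.
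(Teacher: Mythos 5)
Your proof is correct and is exactly the paper's argument: the paper derives the corollary by "combining Corollary~\ref{cor:hens, 2} with the previous lemma," which is precisely the bijective substitution $y=b\ex^{x/(c+1)}(1+z)$ you spell out. The two points you flag ($\varepsilon\in\smallo_{\T}$ and the equivalence of $y\sim b\ex^{x/(c+1)}$ with $z\prec 1$) are indeed the only details worth checking, and you check them correctly.
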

 
\noindent
Let $H$ be a prime model of the theory of closed $H$-fields with small derivation, and identify $H$ with an ordered valued differential subfield of
$\T$; see~[ADH, p.~705]. The constant field $C$ of $H$ is the real closure of $\Q$ in $\R$. 
Take $c\in C^>$. Since $H$ is Liouville closed,   we have $d\in\R^>$ with~$f:=d\ex^{x/(c+1)}\in  H^>$.
Suppose also $c\notin\Q$ and 
let $b\in\R\setminus C$. Then Corollary~\ref{cor:zeros of P} gives $y\in\T^\times$ such that~${P(y)=0}$ and~$y\sim bf$.
So $H\langle y\rangle$  is a pre-$H$-subfield of $\T$ with~$y\notin H$, and  $C_{H\langle y\rangle}=C$ by Corollary~\ref{cor:RosenlichtOrder1}.
Hence~$H\langle y\rangle\supseteq H$ is an example of a proper $\d$-algebraic pre-$H$-field extension of a closed $H$-field with the same constant field, as promised after Theorem~\ref{thm:16.0.3}.
A similar argument gives an analogue of Corollary~\ref{cor:RosenlichtOrder1} in the category of $H$-fields:
 
\begin{cor}\label{cor:RosenlichtOrder1, H-fields}
Let $H\subseteq E$ be an extension of closed $H$-fields with small derivation such that $C_E\neq C$, and let $c\in C^>$. Then $P_c(y)=0$ for some $y\in E\setminus H$, and if~$c\notin\Q$, then for any such $y$ we have $C_{H\langle y\rangle}=C$.
\end{cor}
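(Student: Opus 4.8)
The plan is to treat the two assertions separately. For the constant-field statement I would reduce to Corollary~\ref{cor:RosenlichtOrder1} applied with $K=H$. Given any $y\in E\setminus H$ with $P_c(y)=0$, the first thing to check is that $y$ is transcendental over $H$: since $H$ is a closed $H$-field it is real closed, and a real closed field is relatively algebraically closed in every ordered field extension; as $E$ is an ordered field and $y\notin H$, this forces $y$ to be transcendental over $H$. Now $c\in C\setminus\Q$ (here I use $c\in C^>$ together with $c\notin\Q$), so Corollary~\ref{cor:RosenlichtOrder1} with $K=H$ yields $C_{H\langle y\rangle}=C$, which is exactly the second assertion. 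Note this part uses nothing about $C_E\neq C$ beyond the bare existence of such a $y$.

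For the existence of $y\in E\setminus H$ I would produce a solution with a prescribed \emph{new} leading constant. Fix $\beta\in C_E\setminus C$. Since $H$ is Liouville closed it contains an element $x$ with $x'=1$ and $x\succ 1$, hence the element $f:=\exp\!\big(x/(c+1)\big)\in H$, which satisfies $f^\dagger=1/(c+1)$ and $f\succ 1$ (here $c+1\neq 0$ because $c>0$). I would then look for a zero of $P_c$ of the form $y=f(\beta+W)$ with $W\prec 1$ in $E$. Because $P_c$ is a genuine differential polynomial over $H$ (no fractional powers occur), substituting $y=f(\beta+W)$ and dividing by the unit $f$ produces, after a direct computation, a differential polynomial $\widehat P(W)$ over $E$ whose constant term is $-\beta/(c+1)\asymp 1$, whose linear part is $\big((c+1)\beta f+c\big)W'-\tfrac{1}{c+1}W$, and whose only higher term is the quadratic $(c+1)f\,WW'$. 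The expected balance is $(c+1)\beta f\,W'\asymp -\beta/(c+1)$, forcing $W\asymp 1/f\prec 1$; for such $W$ the quadratic term is $\asymp 1/f\prec 1$ and the summand $\tfrac{1}{c+1}W\asymp 1/f\prec 1$ as well, both negligible against the dominant pair. This says $\widehat P$ is quasilinear, i.e.\ of Newton degree $1$.

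Since $E$ is a closed $H$-field it is $\upo$-free and newtonian, so a differential polynomial of Newton degree $1$ has a zero $W\prec 1$ in $E$; this is the key input that replaces the Hensel/contraction argument (Corollary~\ref{cor:hens, 2}) used for $\T$, and it is available precisely because $E$ is closed. Setting $y:=f(\beta+W)\in E$ then gives $P_c(y)=0$ with $y\sim\beta f$. Finally $y\notin H$: if $y\in H$ then $y/f=\beta+W\in H$ would have residue $\beta$ in the residue field $C$ of $H$ (as $W\prec 1$ and $\beta$ is a nonzero constant), contradicting $\beta\notin C$.

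The main obstacle is the middle step: verifying rigorously that $\widehat P$ has Newton degree $1$ and invoking newtonianity in its correctly normalized form. The dominant-balance heuristic above is convincing, but turning it into an application of the defining property of newtonian fields requires the multiplicative/additive conjugation bookkeeping to be carried out carefully—tracking that no refinement raises the Newton degree, and that newtonianity delivers the zero inside the maximal ideal rather than merely in some extension. Everything else—the transcendence of $y$, the appeal to Corollary~\ref{cor:RosenlichtOrder1}, and the residue-field argument for $y\notin H$—is routine.
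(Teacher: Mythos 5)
Your overall architecture is reasonable, and for the existence part you take a genuinely different route from the paper: the paper proves, via the Hahn-field Hensel lemma (Corollary~\ref{cor:hens, 2}) and Lemma~\ref{pyo}, that in $\T$ there is for each $b\in\R^\times$ a unique zero $y$ of $P_c$ with $y\sim b\ex^{x/(c+1)}$, notes that this is expressible as a parameter-free first-order sentence, and transfers it to $E$ using $E\equiv\T$; you instead work directly in $E$ via newtonianity. Your expansion of $P_c\big(f(\beta+W)\big)/f$ is correct, and your treatment of the second assertion (transcendence of $y$ over the real closed $H$, then Corollary~\ref{cor:RosenlichtOrder1}) is exactly what the paper's proof implicitly relies on.

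However, the step you yourself flag as the main obstacle fails as stated: $\widehat P$ is \emph{not} quasilinear. For every active $\phi$ the coefficients of $W'$ and of $WW'$ in $\widehat P^\phi$ are $\big((c+1)\beta f+c\big)\phi\asymp f\phi$ and $(c+1)f\phi\asymp f\phi$; since $\Psi<-vf$ forces $f\phi\succ 1$ for all active $\phi$, these two coefficients have the common minimal valuation, so both monomials occur in the dominant part, $\ddeg\widehat P^{\phi}=2$ for every active $\phi$, and hence $\ndeg\widehat P=2$. Your argument that ``for such $W$ the quadratic term is negligible'' evaluates the terms at the expected solution $W\asymp 1/f$ rather than comparing coefficients, which is not what Newton degree measures. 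The repair is the standard multiplicative conjugation at the scale you already identified: set $W=V/f$. Then $\widehat P_{\times 1/f}$ has eventual dominant part $-\tfrac{\beta}{c+1}-\beta V$, so $\ndeg\widehat P_{\times 1/f}=1$, and newtonianity of $E$ yields a zero $V\preceq 1$, whence $W\preceq 1/f\prec 1$, $y=f(\beta+W)\sim\beta f$, and $y\notin H$ by your residue argument. So the approach is salvageable with this normalization, but as written the appeal to newtonianity is not justified.
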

\begin{proof}
Take $f\in H^\times$ with $f^\dagger=1/(c+1)$. Since $E\equiv\mathbb T$, there is for each $b\in C_E^\times$ a unique $y\in E^\times$ with $P_c(y)=0$ and $y\sim bf$.
Taking $b\in C_E\setminus C$, this $y$ satisfies~$y\notin H$, with $C_{H\langle y\rangle}=C$ if also $c\notin \Q$, by Corollary~\ref{cor:RosenlichtOrder1}.
\end{proof}

\begin{example}
  Let $H := \T_x := \R[[[x]]]$ be the field of transseries in $x$ over  $\R$
  and let~$E := \T_u[[[x]]]$ be the field of transseries in $x$
  over the field of transseries~$\T_u := \R[[[u]]]$ in a second variable $u$
  (elements in $\T_u$ being constants for the derivation $\frac{d}{dx}$).
  We refer to~\cite{vdH:ln} for the construction of transseries\endnote{The transseries from~\cite{vdH:ln} are \textit{grid-based}, subject to a stronger restriction than the
  \textit{well-based} trans\-series from [ADH, Appendix~A].
  In both cases these trans\-series form $H$-closed fields.} over
  an arbitrary exponential ordered constant field like
  $\T_u$\endnote{The requirement that the constant field comes with
  an exponential function is natural from the transseries perspective in~\cite{vdH:ln},
  but not required from an $H$-field perspective: \textit{mutatis mutandis},
  the construction goes through for general ordered constant fields $C$,
  but the resulting $H$-field of transseries $\T$ is just no longer
  closed under exponentiation.  Nevertheless, it remains closed
  under exponential integration and exponentiation for transseries
  without constant terms.}.
  It is also shown there that both $H$ and~$E$ are Liouville closed and
   satisfy DIVP and so are $H$-closed by~\cite[Theorem~2.7]{ADHip}.
  Now by what precedes, there is a unique $y \in E \setminus H$
  with~${P_c(y) = 0}$, $y \sim u \ex^{x/(c+1)}$, and~$C_{H\langle y\rangle} = \R$.
\end{example}

\noindent
In the rest of this subsection $H$ is a closed $H$-field with small derivation and constant field $C$. In \cite[Section~5]{ADHdim} we called a definable set~$S\subseteq H^n$  {\it parametrizable by constants}\/  
if there are  a semialgebraic set $X\subseteq C^m$, for some~$m$, and a definable bijection $X\to S$.
Lemma~\ref{pyo}, Corollary~\ref{cor:zeros of P}, and $H\equiv \T$
 yield:

\begin{cor}\label{cor:par by const}
For every~${c\in C^>}$ the definable set $\Zero(P_c)\subseteq H$ is parametrizable by constants.
\end{cor}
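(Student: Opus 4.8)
The plan is to produce an explicit definable bijection between $\Zero(P_c)$ and a semialgebraic subset of a power of $C$, reading off the structure of $\Zero(P_c)$ from the analysis of $\Zero(P_c)$ in $\T$ (Lemma~\ref{pyo} and Corollary~\ref{cor:zeros of P}) and transporting it to $H$ via $H\equiv\T$.

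First I would split off the two trivial zeros: $P_c(0)=P_c(-1)=0$, so $\{0,-1\}\subseteq\Zero(P_c)$, and it remains to parametrize $Z:=\Zero(P_c)\setminus\{0,-1\}$. Since $H$ is Liouville closed, fix $f\in H^>$ with $f^\dagger=1/(c+1)$. I claim the map
\[
\beta\colon C^\times\to Z,\qquad \beta(b):=\text{the unique $y$ with $P_c(y)=0$ and $y\sim bf$,}
\]
is a well-defined definable bijection. To justify this I would package the relevant content of Lemma~\ref{pyo} and Corollary~\ref{cor:zeros of P} as a single first-order sentence $\tau$ in the language of ordered valued differential fields: universally quantifying over $c>0$, it asserts the existence of some $f$ with $f^\dagger=(c+1)^{-1}$ for which the relation ``$P_c(y)=0,\ y\neq 0,-1,\ b\in C^\times,\ y\sim bf$'' is the graph of a bijection from $Z$ onto $C^\times$. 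Corollary~\ref{cor:zeros of P} supplies, for each $b$, the existence and uniqueness of $y$ (so $\beta$ is well defined and injective), while Lemma~\ref{pyo} shows every $y\in Z$ satisfies $y\sim bf$ for some $b\in C^\times$, which is unique because $b_1f\sim b_2f$ forces $b_1=b_2$ for $b_1,b_2\in C^\times$ (so $\beta$ is onto $Z$); hence $\tau$ holds in $\T$. As $C$ is definable as the constant field and the dominance relation $\preceq$ belongs to the language, $\tau$ is genuinely first-order; transferring it by $H\equiv\T$ and instantiating at our $c\in C^>$ yields the bijection $\beta$ in $H$, whose graph is visibly definable.

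Finally I would assemble the full parametrization. The domain $C^\times$ is semialgebraic in $C$, and adjoining the two points $0,-1$ costs only a finite semialgebraic correction: set
\[
X:=\big(C^\times\times\{0\}\big)\cup\{(0,1),(0,2)\}\subseteq C^2,
\]
a semialgebraic set, and define $\Phi\colon X\to\Zero(P_c)$ by $\Phi(b,0)=\beta(b)$ for $b\in C^\times$, $\Phi(0,1)=0$, and $\Phi(0,2)=-1$. Then $\Phi$ is a definable bijection, witnessing that $\Zero(P_c)$ is parametrizable by constants.

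The step I expect to require the most care is the transfer: making sure the combined assertion of Lemma~\ref{pyo} and Corollary~\ref{cor:zeros of P} is cast as an honest first-order sentence — with $c$ universally quantified rather than carried as an external parameter, and with ``$y\sim bf$'' expressed through the definable dominance relation — so that $H\equiv\T$ legitimately applies. Everything after that is routine bookkeeping.
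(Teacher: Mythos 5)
Your proof is correct and follows exactly the route the paper takes: its entire proof of this corollary is the citation of Lemma~\ref{pyo}, Corollary~\ref{cor:zeros of P}, and $H\equiv\T$, and your write-up simply makes explicit the first-order transfer and the bookkeeping for the two trivial zeros $0,-1$. In particular, your care in universally quantifying $c$ over positive constants (rather than treating it as a parameter from $\T$) is precisely the point that makes the transfer legitimate.
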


\noindent
In \cite[Section~5]{ADHdim} an irreducible differential polynomial $Q\in H\{Y\}^{\ne}$ was said to {\it create a constant}\/ if
for some element $y$ in a differential field extension of $H$ with minimal annihilator $Q$ over $H$ we have $C_{H\langle y\rangle}\ne C$. We showed that for such~$Q$, if~${\order(Q)=1}$, then $\Zero(Q)$ is parametrizable by constants \cite[Pro\-po\-si\-tion~5.4]{ADHdim}. If~${c\notin\Q}$ in Corollary~\ref{cor:par by const}, then the  irreducible differential polynomial~${P_c\in H\{Y\}}$ of order~$1$   is parametrizable by constants, yet does not create a constant.
Nonetheless, it does create a constant in the following more liberal sense.

\begin{prop}
  Let $E$ be a closed $H$-field extension of $H$ and let $y \in E \setminus H$ be d-algebraic over $H$.
  Then some $c\in C_E \setminus C$ is definable in $E$ over $H\cup\{y\}$.
\end{prop}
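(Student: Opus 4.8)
The plan is to use Theorem~\ref{thm:16.0.3} to force the \emph{residue} field of $H\langle y\rangle$ to grow (even though, as the preceding subsection shows, its constant field need not), and then to detect this growth inside $E$ via the standard part map, thereby producing a new constant that is still definable from $y$. First I would observe that $H\langle y\rangle$, viewed as an ordered valued differential subfield of the $H$-field $E$, is a pre-$H$-field extension of $H$; it properly contains $H$ since $y\notin H$, and it is $\d$-algebraic over $H$ by hypothesis, so $H$ is not $\d$-closed in $H\langle y\rangle$. Since $H$ is a closed $H$-field, Theorem~\ref{thm:16.0.3}, in the residue-field form recorded in the remark following it, then forces $H\langle y\rangle$ to have a residue field strictly larger than that of $H$. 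As $H$ is an $H$-field, $\mathcal{O}_H=C+\smallo_H$ and hence $\res(H)=C$, so we conclude $\res\big(H\langle y\rangle\big)\supsetneq C$.

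Next I would pick $g\in\mathcal{O}_{H\langle y\rangle}$ whose residue lies outside $C$, and set $c:=\operatorname{st}_E(g)\in C_E$, the standard part of $g$ in $E$, which exists because $E$ is a closed $H$-field, so $\mathcal{O}_E=C_E+\smallo_E$. The valued field inclusion $H\langle y\rangle\hookrightarrow E$ induces an embedding $\res(H\langle y\rangle)\hookrightarrow\res(E)=C_E$ sending the residue of $g$ to $c$ and fixing $C$ pointwise; since the residue of $g$ lies outside $C$, this yields $c\in C_E\setminus C$. For definability, note that as an element of $H\langle y\rangle=H(y,y',y'',\dots)$, the element $g$ is a rational function in $y$ and finitely many of its derivatives with coefficients in $H$, hence is definable in $E$ over $H\cup\{y\}$ (the derivatives being supplied by the derivation). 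The constant $c$ is then pinned down as the unique element of $E$ with $c'=0$ and $g-c\prec 1$; as being a constant and the relation $\prec$ are definable in $E$, the element $c$ is definable over $H\cup\{y\}$, as required.

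The main obstacle is precisely the \emph{correct} use of Theorem~\ref{thm:16.0.3}: the ``How not to use'' subsection exhibits a proper $\d$-algebraic pre-$H$-field extension of a closed $H$-field keeping the same constant field, so one cannot expect a new constant to appear in $H\langle y\rangle$ itself. The key point I would stress is that the residue field must nonetheless grow, and that this growth is witnessed inside the ambient closed $H$-field $E$ by the standard part of a suitable bounded element of $H\langle y\rangle$; this is what converts the abstract residue-field statement into a concrete new constant of $E$ that remains definable from $y$.
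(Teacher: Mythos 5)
Your argument is correct and rests on exactly the same two pillars as the paper's proof: Theorem~\ref{thm:16.0.3} and the definability of standard parts in a closed $H$-field. The only difference is packaging --- the paper applies Theorem~\ref{thm:16.0.3} directly to the definable closure of $H\cup\{y\}$ in $E$, which is an $H$-subfield precisely because standard parts are definable, whereas you apply the residue-field form of that theorem to the pre-$H$-field $H\langle y\rangle$ and only afterwards pass to the standard part of a witness; both routes are equally valid.
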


\begin{proof}
 The definable closure $K$ of $H \cup \{y\}$ in $E$ is a differential subfield, even an
  $H$-subfield of $E$:  if $u\in K$ and $u \asymp 1$, then $u\sim c$ for a unique $c\in C_E$,
 so  $c \in K$ for this $c$. 
It remains to note that the constant field of $K$ is strictly larger than that of~$H$, by Theorem~\ref{thm:16.0.3} and
  our assumption that $y \in K \setminus H$ is d-algebraic over~$H$.
\end{proof}

\subsection*{Generalizing Theorem~\ref{thm:16.0.3}}  
In this subsection $K$ is a $\d$-valued field of $H$-type with algebraically closed constant field $C$ and divisible group $K^\dagger$ of logarithmic derivatives.
We use spectral extensions to prove an analogue of Theorem~\ref{thm:16.0.3}: 

\begin{theorem}\label{noextension} Suppose $K$ is  $\upo$-free and newtonian. Then $K$ is $\d$-closed in each 
$\d$-valued field extension $L$ of $H$-type with $C_L=C$ and $L^\dagger\cap K=K^\dagger$. 
\end{theorem}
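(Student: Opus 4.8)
Unwinding the definitions, $K$ being $\d$-closed in $L$ means that every $y\in L$ which is $\d$-algebraic over $K$ already lies in $K$. So I would fix such a $y$ and put $F:=K\langle y\rangle\subseteq L$; the task is to show $F=K$. As a $\d$-valued subfield of $L$, the field $F$ is again of $H$-type and $\d$-algebraic over $K$, and it inherits $C_F=C$ together with $F^\dagger\cap K\subseteq L^\dagger\cap K=K^\dagger$, hence $F^\dagger\cap K=K^\dagger$. It thus suffices to prove that $K$ has no \emph{proper} $\d$-algebraic $\d$-valued extension $F$ of $H$-type with $C_F=C$ and $F^\dagger\cap K=K^\dagger$. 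The first thing I would extract is the rigidity these two conditions provide: if $u\in F^\times$ has $u^\dagger\in K$, then $u^\dagger\in F^\dagger\cap K=K^\dagger$, say $u^\dagger=a^\dagger$ with $a\in K^\times$; then $(u/a)^\dagger=0$ forces $u/a\in C_F=C\subseteq K$, so $u\in K$. Thus $F$ realizes no exponential element over $K$ beyond those already present in $K^\times$.

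Next I would bring in the universal exponential extension. Since $K$ is $\upo$-free it is $\upl$-free, and since $K$ is newtonian it is $1$-linearly newtonian, so $\I(K)\subseteq K^\dagger$ by \cite[Corollary~1.2.14]{ADH4}. Hence Lemma~\ref{lem:as cpl Omega} applies (after arranging, by compositional conjugation with a suitable element of $K^\times$, that the derivation is small): equipping $\Omega=\Omega_K$ with a spectral extension of the valuation of $K$ turns it into a $\d$-valued extension of $K$ of $H$-type whose $H$-asymptotic couple $(\Gamma_\Omega,\psi_\Omega)$ is closed, with $\Psi_\Omega\subseteq\Gamma$ and $C_\Omega=C$. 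The point of $\Omega_K$ is that it realizes \emph{all} exponential elements over $K$, as $(\Univ_K^\times)^\dagger=K$; in this sense $\Omega_K$ stands in exact opposition to $F$, which by the previous paragraph realizes none. The closed asymptotic couple of $\Omega_K$ records precisely which new value-group elements an exponential extension of $K$ can produce.

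Using this opposition I would show that $F\supseteq K$ is an \emph{immediate} extension, i.e.\ both the value group and the residue field are unchanged. That no new residues appear is the easier half: with the derivation small the residue field carries the trivial derivation, and $C_F=C$ together with the algebraic closedness of $C$ leaves no room for a new residue. The delicate half is the value group: if $\Gamma_F\supsetneq\Gamma$, one picks $f\in F^\times$ with $v(f)\notin\Gamma$ and must derive a contradiction. Here I would compare $F$ with the exponential extension $\Omega_K$ through its closed asymptotic couple and the inclusion $\Psi_\Omega\subseteq\Gamma$: a new value of $H$-type over $K$ can only be produced by an exponential element over $K$, and such an element has just been excluded by the rigidity step. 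This yields $\Gamma_F=\Gamma$, so $F\supseteq K$ is immediate.

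Finally, a $\upo$-free newtonian $\d$-valued field of $H$-type is $\d$-algebraically maximal, i.e.\ admits no proper immediate $\d$-algebraic extension [ADH]. Since $F\supseteq K$ is immediate and $\d$-algebraic, this forces $F=K$, that is $y\in K$, as required. I expect the main obstacle to be the value-group step of the third paragraph: it is there that the algebraic hypothesis $F^\dagger\cap K=K^\dagger$ must be converted into the valuation-theoretic conclusion $\Gamma_F=\Gamma$, and making this conversion precise is exactly what the closed asymptotic couple of the universal exponential extension, together with $\Psi_\Omega\subseteq\Gamma$, is meant to supply. By contrast, the rigidity computation, the residue-field control, and the concluding appeal to $\d$-algebraic maximality should be comparatively routine.
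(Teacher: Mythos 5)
Your rigidity step (from $F^\dagger\cap K=K^\dagger$ and $C_F=C$, any $u\in F^\times$ with $u^\dagger\in K$ lies in $K$) and your concluding appeal to $\d$-algebraic maximality of an $\upo$-free newtonian $\d$-valued field of $H$-type are both correct, and both do appear in the paper's argument. But the middle of your proof --- the claim that $\Gamma_F=\Gamma$, justified by the slogan that ``a new value of $H$-type over $K$ can only be produced by an exponential element over $K$'' --- is a genuine gap, and it is exactly the content of the theorem. You give no argument for this implication, and it is not how the paper proceeds: a single new value in $\Gamma_F\setminus\Gamma$ is not in itself contradictory (an extension of finite transcendence degree can perfectly well have $\Q\Gamma_F/\Gamma$ of finite positive dimension), and there is no mechanism that converts one element $f\in F^\times$ with $v(f)\notin\Gamma$ into an element $u\in F\setminus K$ with $u^\dagger\in K$. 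Indeed the new values that actually arise are of the form $\psi_{\alpha_1,\dots,\alpha_n}\big(v(f_0-b_0)\big)$, produced by iterated logarithmic derivatives of \emph{differences}, not by exponential elements of $F$ over $K$, so the slogan cannot be upgraded to a proof.

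What the paper proves instead (Lemma~\ref{lem:ADH 16.1.1, Calinfi}, the analogue of [ADH, 16.1.1]) is a dichotomy: for $f\in L\setminus K$, either some $y\in K\langle f\rangle\setminus K$ generates an \emph{immediate} extension of $K$, or $\Q\Gamma_{K\langle f\rangle}/\Gamma$ is \emph{infinite}-dimensional. The second horn is obtained by iterating best approximations: set $f_0=f$, let $b_n\in K$ be a best approximation to $f_n$ (these exist because $K$ is asymptotically $\d$-algebraically maximal and no immediate subextension occurs), and put $f_{n+1}=(f_n-b_n)^\dagger$; your rigidity observation is precisely what guarantees $f_{n+1}\notin K$. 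Writing $b_n=a_n^\dagger$ with $a_n\in\Univ_K^\times$ and computing in a spectral extension of $\Omega_L$, one gets $v(f_n-b_n)=\psi_{\alpha_1,\dots,\alpha_n}\big(v(f_0-b_0)\big)$ with $\alpha_n=v(a_n)$, and closedness of $(\Gamma_\Omega,\psi_\Omega)$ together with $\Psi_\Omega\subseteq\Gamma$ yields that the $v(f_n-b_n)$ are $\Q$-linearly independent over $\Gamma$. For $f$ $\d$-algebraic over $K$ the infinite-dimensional horn is impossible, so an immediate subextension $K\langle y\rangle\supsetneq K$ exists, and $\d$-algebraic maximality then gives the contradiction. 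So your plan has the right ingredients in hand but is missing the actual engine --- the best-approximation/logarithmic-derivative iteration and the $\psi_{\alpha_1,\dots,\alpha_n}$ computation --- that turns the hypotheses into value-group information.
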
 

\noindent
This yields a generalization of Theorem~\ref{thm:16.0.3}:

\begin{cor}\label{cor:noextension}
Let $H$ be an $\upo$-free newtonian real closed  $H$-field and~$E$ be an $H$-field extension of $H$. 
Then $H$ is $\d$-closed in
$E$ iff $C_E=C_H$ and $E^\dagger\cap H=H^\dagger$.
\end{cor}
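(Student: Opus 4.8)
The plan is to reduce the statement to Theorem~\ref{noextension} by complexifying, so that the real closed constant field of $H$ becomes algebraically closed. The forward direction is immediate: if $H$ is $\d$-closed in $E$, then in particular $H$ is $1$-$\d$-closed in $E$, whence $C_E=C_H$ and $E^\dagger\cap H=H^\dagger$ by \eqref{eq:1-d-closed}. For the converse I assume $C_E=C_H$ and $E^\dagger\cap H=H^\dagger$ and aim to conclude that $H$ is $\d$-closed in $E$. Since $E$ is an ordered field, $-1$ is not a square in $E$, so by Corollary~\ref{cor:dc base field ext} it suffices to show that $K:=H[\imag]$ is $\d$-closed in $L:=E[\imag]$, and for this I intend to invoke Theorem~\ref{noextension}.

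First I would verify the standing hypotheses on $K$ together with the explicit hypotheses of that theorem. As $H$ is a real closed $H$-field, $K=H[\imag]$ is algebraically closed and $\d$-valued of $H$-type, with algebraically closed constant field $C_K=C_H[\imag]$ and divisible group $K^\dagger$ of logarithmic derivatives. The two properties that require transfer from $H$ to $K$ are $\upo$-freeness and newtonianity; both pass from the real closed $H$-field $H$ to its complexification by the relevant results of [ADH] (the asymptotic couple is unchanged, which handles $\upo$-freeness, and newtonianity of a real closed $H$-field is equivalent to that of its complexification). Likewise $L=E[\imag]$ is $\d$-valued of $H$-type, since $E$ is an $H$-field and $C_{E[\imag]}=C_E[\imag]$; moreover $C_L=C_E[\imag]=C_H[\imag]=C_K$ because $C_E=C_H$.

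It then remains to check the hypothesis $L^\dagger\cap K=K^\dagger$, which is the crux. The inclusion $K^\dagger\subseteq L^\dagger\cap K$ is trivial, so the work is in the reverse inclusion. Here I would pass to the real closure $F$ of $E$: it is a real closed $H$-field, hence a real closed asymptotic extension of $H$ with convex valuation ring, and it is algebraic over $E$. Since $H^\dagger$ is divisible and $E^\dagger\cap H=H^\dagger$, Lemma~\ref{lem:Kdagger alg closure} applied to $H\subseteq E\subseteq F$ gives $F^\dagger\cap H=H^\dagger$. Now the first part of Lemma~\ref{lem:logder real closed} (whose hypotheses hold, as $H$ is $H$-asymptotic with asymptotic integration and $K$ is $1$-linearly newtonian) yields $K^\dagger=H^\dagger\oplus\I(H)\imag$, while its ``moreover'' part applied to $F$ gives $F[\imag]^\dagger\cap K=(F^\dagger\cap H)\oplus\I(H)\imag=H^\dagger\oplus\I(H)\imag=K^\dagger$. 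Since $E\subseteq F$ gives $L^\dagger=E[\imag]^\dagger\subseteq F[\imag]^\dagger$, intersecting with $K$ produces $L^\dagger\cap K\subseteq K^\dagger$, as needed.

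With every hypothesis in place, Theorem~\ref{noextension} shows that $K$ is $\d$-closed in $L$, and Corollary~\ref{cor:dc base field ext} then returns that $H$ is $\d$-closed in $E$, completing the converse. I expect the main obstacle to be precisely the logarithmic-derivative bookkeeping of the third paragraph: transporting the hypothesis $E^\dagger\cap H=H^\dagger$ up to the real closure $F$ and combining the two halves of Lemma~\ref{lem:logder real closed} correctly, all the while confirming that the $\upo$-free and newtonian transfer results for $H\rightsquigarrow H[\imag]$ apply here without the extra assumption of Liouville closedness that figured in Theorem~\ref{thm:16.0.3}.
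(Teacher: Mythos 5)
Your proposal is correct and follows essentially the same route as the paper: forward direction via \eqref{eq:1-d-closed}, then complexification via Corollary~\ref{cor:dc base field ext}, transfer of $\upo$-freeness and newtonianity to $H[\imag]$, and the logarithmic-derivative computation via Lemma~\ref{lem:Kdagger alg closure} and Lemma~\ref{lem:logder real closed} feeding into Theorem~\ref{noextension}. The only (immaterial) difference is that the paper replaces $E$ by its real closure at the outset, whereas you pass to the real closure $F$ only for the dagger bookkeeping and then use $L^\dagger\subseteq F[\imag]^\dagger$.
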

\begin{proof}
The ``only if'' direction holds by \eqref{eq:1-d-closed}. Suppose $C_E=C_H$ and $E^\dagger\cap H=H^\dagger$; we need to show that
$H$ is $\d$-closed in $E$.  
Replacing $E$ by its real closure and using
Lemma~\ref{lem:Kdagger alg closure} and Corollary~\ref{cor:dc base field ext},
it suffices to show that the $\d$-valued field $K:=H[\imag]$ is $\d$-closed in its  $\d$-valued field extension $L:=E[\imag]$.
By~[ADH, 11.7.23, 14.5.7], $K$ is $\upo$-free and newtonian. Also~${C_L=C_E[\imag]=C_H[\imag]=C}$. 
Moreover, by Lemma~\ref{lem:logder real closed}, 
$$L^\dagger\cap K\ =\ (E^\dagger\cap H)\oplus\I(H)\imag\ =\  H^\dagger\oplus\I(H)\imag\ =\ K^\dagger.$$
Now use Theorem~\ref{noextension}.
\end{proof}

\noindent
In the same way that [ADH,   16.0.3] follows from [ADH,   16.1.1], Theorem~\ref{noextension} follows from an  analogue of [ADH, 16.1.1]:

\begin{lemma}\label{lem:ADH 16.1.1, Calinfi}
Let $K$ be an $\upo$-free newtonian $\d$-valued field, $L$ a $\d$-valued field extension of $K$ of $H$-type with
$C_L=C$ and $L^\dagger\cap K=K^\dagger$, and let $f\in L\setminus K$. Suppose there is no~$y\in K\langle f\rangle\setminus K$ such that $K\langle y\rangle$ is an immediate extension of~$K$.
Then the $\Q$-linear space~$\Q\Gamma_{K\langle f\rangle}/\Gamma$ is infinite-dimensional.
\end{lemma}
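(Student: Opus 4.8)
\emph{The plan.} This is the exact analogue of [ADH, 16.1.1], and I would prove it by transporting that argument to the present setting, with the explicit hypotheses $C_L=C$ and $L^\dagger\cap K=K^\dagger$ replacing the bookkeeping carried by the $H$-field structure in the original. The intended application is the deduction of Theorem~\ref{noextension}, which proceeds exactly as [ADH, 16.0.3] is obtained from [ADH, 16.1.1]: if $f\in L\setminus K$ were $\d$-algebraic over $K$, then, since an $\upo$-free newtonian field has no proper immediate $\d$-algebraic extension (by [ADH]), no $y\in K\langle f\rangle\setminus K$ could generate an immediate extension of $K$, so the hypothesis of the lemma would hold; the lemma would then make $\Q\Gamma_{K\langle f\rangle}/\Gamma$ infinite-dimensional, contradicting the finiteness of $\operatorname{trdeg}(K\langle f\rangle|K)$. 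Hence no such $f$ exists and $K$ is $\d$-closed in $L$.

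The first point is that the residue field does not grow: a $\d$-valued field has residue field equal to (the image of) its constant field, so $C_L=C$ forces $K\langle f\rangle$ and $K$ to share a residue field. Consequently, for $y\in K\langle f\rangle\setminus K$ the extension $K\langle y\rangle$ of $K$ is immediate if and only if $\Gamma_{K\langle y\rangle}=\Gamma$, and the hypothesis says precisely that every such $y$ enlarges the value group. The task is to upgrade this ``growth at each element'' into infinite-dimensional growth of the whole value group.

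For this I would follow [ADH, 16.1.1] and build, intrinsically from the hypothesis, an infinite family of elements of $K\langle f\rangle$ whose valuations are $\Q$-linearly independent over $\Gamma$. Starting from $f$ (after a harmless normalization using $C_L=C$ to remove any constant part), one passes to successive elements manufactured from $f$ and its derivatives, using the function $\psi$ of the $H$-asymptotic couple to predict how the derivation displaces valuations; the no-immediate-extension hypothesis guarantees that each new element again lies outside $\Gamma$, and $\upo$-freeness --- the absence of a fixed point for the iterated logarithmic derivative --- prevents the iterated values from stabilizing into $\Gamma$, so that a fresh independent direction is gained at every stage. The hypothesis $L^\dagger\cap K=K^\dagger$ enters to keep ``exponential'' contributions in check: any $y\in K\langle f\rangle^\times$ with $y^\dagger\in K$ already has $v(y)\in\Gamma$, so new values cannot be produced cheaply.

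The main obstacle is the inductive step: certifying that each stage genuinely contributes a value independent of those already obtained, rather than collapsing back into the span of $\Gamma$ and the earlier values. This is exactly where newtonianity is needed --- through the Newton-polynomial and Newton-degree apparatus of [ADH, Chapters~11--14] --- to control the algebraic relations that the derivatives of $f$ satisfy and to ensure that the pseudo-Cauchy behaviour one encounters is of the tame, $\d$-algebraic type to which that machinery applies. Making this bookkeeping precise, while tracking the interplay of the derivation with the valuation via $\psi$, is the technical heart of the proof.
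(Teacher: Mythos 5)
Your overall skeleton is the right one --- the paper's proof is indeed a transport of [ADH, 16.1.1], built on iterating ``subtract a best approximation, then take the logarithmic derivative'' to produce elements $f_0=f$, $f_{n+1}=(f_n-b_n)^\dagger$ with $v(f_n-b_n)\in\Gamma_{K\langle f\rangle}\setminus\Gamma$ --- but the proposal has a genuine gap exactly where you defer to ``bookkeeping'': the linear independence of the values $v(f_n-b_n)$ over $\Gamma$ is the entire content of the lemma, and the one new idea needed to establish it here is missing. In [ADH, 16.1.1] one writes each $b_n$ as $a_n^\dagger$ with $a_n$ in the (Liouville closed) ambient field, so that $f_n-b_n=\big((f_{n-1}-b_{n-1})/a_n\big)^\dagger$ and hence $v(f_n-b_n)=\psi_{\alpha_1,\dots,\alpha_n}\big(v(f_0-b_0)\big)$ with $\alpha_n=v(a_n)$. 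In the present setting $K$ is merely a $\d$-valued field and $b_n$ need not be a logarithmic derivative from $K$ or $L$; the remedy in the paper is to take $a_n\in\Univ^\times$ in the universal exponential extension of $K$, carry out the computation in the differential fraction field $\Omega_L$ of $\Univ_L$ equipped with a \emph{spectral extension} of the valuation, and then use that the asymptotic couple of $\Omega$ is closed with $\Psi_\Omega\subseteq\Gamma$ (Lemma~\ref{lem:as cpl Omega}, via $\I(K)\subseteq K^\dagger$ from [ADH, 14.2.5]) together with [ADH, 9.9.2] to show that any $\Q$-dependence of the $v(f_n-b_n)$ over $\Gamma$ would force $v(f_m-b_m)\in\Gamma$, a contradiction. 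None of this apparatus appears in your plan, and without it the inductive step you flag as ``the technical heart'' cannot be closed.

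Two smaller misattributions: newtonianity does not enter through the Newton-polynomial/Newton-degree machinery you invoke for the independence step; it enters only once, via [ADH, 14.0.2], to make $K$ asymptotically $\d$-algebraically maximal, which (combined with the no-immediate-extension hypothesis) rules out divergent pc-sequences in $K$ with pseudolimits in $K\langle f\rangle$ and thereby guarantees that best approximations $b_n$ exist at all --- a step your proposal omits entirely. And the hypothesis $L^\dagger\cap K=K^\dagger$ is used (together with $C_L=C$) to show that each $f_{n+1}=(f_n-b_n)^\dagger$ again lies outside $K$, so the iteration can continue; your gloss of it as bounding ``exponential contributions'' is true but is not where it does its work.
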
 

\noindent
The proof of Lemma~\ref{lem:ADH 16.1.1, Calinfi} is much like that of [ADH,   16.1.1], except where
the latter uses that any $b$ in a Liouville closed $H$-field equals $a^\dagger$ for some nonzero $a$ in that field. This might not work with elements of $K$, and the remedy is to take instead for every $b\in K$ an element $a$ in $\Univ^\times$ with $b=a^\dagger$.
The relevant computation should then take place in the differential fraction field $\Omega_L$ of $\Univ_L$ instead of in $L$ where $\Omega_L$ is equipped with a spectral extension of the valuation of $L$. For all this  to make sense, we first take an active $\phi$ in $K$ and
replace $K$ and $L$ by $K^\phi$ and $L^\phi$, arranging in this way that the derivation of $L$ (and of $K$) is small. Next we
replace~$L$ by its algebraic closure, so that $L^\dagger$ is divisible, while preserving $L^\dagger\cap K=K^\dagger$ by 
\cite[Lemma~1.2.1]{ADH4},  and also preserving the other conditions on $L$ in Lemma~\ref{lem:ADH 16.1.1, Calinfi}, as well as the derivation of $L$ being small.  This allows us to
identify  $\Univ$ with a differential subring of $\Univ_L$
 and accordingly $\Omega$ with a differential subfield of $\Omega_L$.  We equip~$\Omega_L$ with a spectral extension of the valuation of $L$ 
and make $\Omega$ a valued subfield of $\Omega_L$. Then the valuation of $\Omega$ is a spectral extension of the valuation of $K$ to $\Omega$, so we have
the following inclusions of $\d$-valued fields:
$$\xymatrix{L \ar[r] & \Omega_L \\
K \ar[u] \ar[r] & \Omega \ar[u]}$$
With these preparations we can now give  the proof of Lemma~\ref{lem:ADH 16.1.1, Calinfi}:

\begin{proof} As we just indicated we arrange that $L$ is algebraically closed with small derivation,  and with an inclusion diagram of $\d$-valued fields involving $\Omega$ and $\Omega_L$, as above. (This will not be used until we arrive at the Claim below.)

By [ADH, 14.0.2], $K$ is asymptotically $\d$-algebraically maximal. Using this and the assumption about $K\langle f\rangle$ it follows
as in the proof of [ADH, 16.1.1] that
there is no divergent pc-sequence in $K$
with a pseudolimit in~$K\langle f\rangle$. Thus every~$y$ in~$K\langle f \rangle \setminus K$ has a 
 {\it a  best approximation in $K$,}\/ that is, an element $b\in K$ such that  $v(y-b)=\max v(y-K)$.
For such $b$ we have $v(y-b)\notin \Gamma$, since $C_L=C$.

Now pick a best approximation $b_0$ in $K$ to $f_0:=f$, and set
$f_1:= (f_0-b_0)^\dagger$. Then~$f_1\in K\langle f\rangle \setminus K$, since $L^\dagger\cap K=K^\dagger$ and $C=C_L$. Thus
$f_1$ has a best approximation $b_1$ in $K$, and continuing this way, we
obtain a sequence $(f_n)$ in~${K\langle f \rangle\setminus K}$ and a sequence
$(b_n)$ in $K$, such that $b_n$ is a best approximation in $K$ to~$f_n$ and~$f_{n+1}=(f_n-b_n)^\dagger$ for all $n$. Thus
$v(f_n-b_n)\in \Gamma_{K\langle f \rangle}\setminus \Gamma$ for all $n$. 

\claim{$v(f_0-b_0),v(f_1-b_1), v(f_2-b_2),\dots$ are $\Q$-linearly independent over $\Gamma$.}

\noindent
To prove this claim, take $a_n\in \Univ^\times$ with $a_n^\dagger=b_n$ for
$n\ge 1$. Then in $\Omega_L$,
$$f_n-b_n\ =\ (f_{n-1}-b_{n-1})^\dagger-a_n^\dagger\ =\ \left(\frac{f_{n-1}-b_{n-1}}{a_n}\right)^\dagger \qquad (n\ge 1).$$
With $\psi:= \psi_{\Omega_L}$ and $\alpha_n=v(a_n)\in \Gamma_{\Omega}\subseteq \Gamma_{\Omega_L}$ for $n\ge 1$, we get 
\begin{align*} v(f_n-b_n)\ &=\ \psi\big(v(f_{n-1}-b_{n-1})-\alpha_n\big),\ 
 \text{ so by an easy induction on $n$,}\\
 v(f_n-b_n)\ &=\ \psi_{\alpha_1,\dots, \alpha_n}\big(v(f_0-b_0)\big)   \qquad (n\ge 1).
 \end{align*} 
 (The definition of the functions $\psi_{\alpha_1,\dots, \alpha_n}$ is given just before [ADH, 9.9.2].)
Suppose towards a contradiction that $v(f_0-b_0), \dots, v(f_n-b_n)$ are $\Q$-linearly dependent
over~$\Gamma$. Then we have $m<n$ and $q_1,\dots, q_{n-m}\in \Q$ such that
$$v(f_m-b_m) + q_1v(f_{m+1}-b_{m+1}) + \cdots + q_{n-m}v(f_n-b_n)\in \Gamma.$$ 
For $\gamma:= v(f_m-b_m)\in \Gamma_L\setminus\Gamma$ this gives
$$\gamma + q_1\psi_{\alpha_{m+1}}(\gamma) + \cdots + q_{n-m}\psi_{\alpha_{m+1},\dots,\alpha_n}(\gamma)\in \Gamma.$$
By [ADH, 14.2.5] we have $\I(K)\subseteq K^\dagger$,  so the $H$-asymptotic couple of $\Omega$ is closed  with $\Psi_\Omega\subseteq\Gamma$, by Lemma~\ref{lem:as cpl Omega}. 
 Hence $\gamma\in\Gamma_\Omega$ by [ADH, 9.9.2].
 Together with~$\Psi_\Omega\subseteq\Gamma$ and~$\alpha_{m+1},\dots,\alpha_n\in\Gamma_\Omega$ this gives 
$\psi_{\alpha_{m+1}}(\gamma),\dots, \psi_{\alpha_{m+1},\dots,\alpha_n}(\gamma)\in\Gamma$
and thus $\gamma\in\Gamma$, a contradiction.
\end{proof}

\noindent
We augment the language $\mathcal L_{\preceq}=\{0,1,{-},{+},{\,\cdot\,},{\der},{\preceq}\}$ of   valued differential rings by a unary relation symbol to obtain the language $\mathcal L_{c}$. We construe each valued
differential field  as an $\mathcal L_c$-structure by interpreting the new relation symbol as its constant field. 
In \cite[Proposition~6.2]{ADHdim} we showed that if $H$ is a closed $H$-field, then a nonempty definable set $S\subseteq H^n$ is
co-analyzable (relative to $C_H$) iff $\dim S=0$.
(See Section~6 of \cite{ADHdim} for the definition of ``co-analyzable''\endnote{This terminology, coming from \cite{HHM}, is slightly unfortunate
in our context in light of the important role played by \'Ecalle's analyzable functions~\cite{Ecalle} in the theory of transseries.}.)
This used Theorem~\ref{thm:16.0.3} and a model-theoretic test for co-analyzability from \cite{HHM} (cf.~\cite[Proposition~6.1]{ADHdim}).
From Corollary~\ref{cor:noextension} we obtain a partial generalization of this fact:

\begin{cor}\label{cor:coanal}
Let  $H$ be an $\upo$-free newtonian real closed $H$-field. If  $S\subseteq H^n$ and~$\dim S=0$, then $S$ is co-analyzable. \end{cor}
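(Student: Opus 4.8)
The plan is to imitate the proof of \cite[Proposition~6.2]{ADHdim}, but to feed in the more general Corollary~\ref{cor:noextension} where that argument used Theorem~\ref{thm:16.0.3}. Working in the language $\mathcal{L}_c$, I would invoke the model-theoretic criterion of \cite[Proposition~6.1]{ADHdim} (coming from \cite{HHM}), which reduces the co-analyzability of $S$ relative to $C=C_H$ to a containment statement about $\mathcal{L}_c$-elementary substructures capturing the constants. Concretely, I expect the criterion to let me reduce the claim to the following: for every (sufficiently saturated) $\mathcal{L}_c$-elementary extension $\hat H$ of $H$ and every $\mathcal{L}_c$-elementary substructure $E$ of $\hat H$ with $H\subseteq E$ and $C_E=C_{\hat H}$, one has $S(\hat H)\subseteq E^n$. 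The whole proof then comes down to verifying this containment under the hypothesis $\dim S=0$.

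So fix such $\hat H$ and $E$, and let $a=(a_1,\dots,a_n)\in S(\hat H)$. Since $\dim S=0$, the coordinate projections satisfy $\dim\pi_i(S)=0$, so \cite[Lemma~1.2]{ADHdim} yields nonzero $P_1,\dots,P_n\in H\{Y\}$ with $\pi_i(S)\subseteq\Zero(P_i)$, exactly as in the proof of Lemma~\ref{lem:dim0}; hence each $a_i$ is $\d$-algebraic over $H$, and a fortiori over $E$. Because $H$ is an $\upo$-free newtonian real closed $H$-field and these properties are elementary, its $\mathcal{L}_c$-elementary sub- and extensions $E$ and $\hat H$ are again $\upo$-free newtonian real closed $H$-fields, and $\hat H$ is an $H$-field extension of $E$. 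I would then apply Corollary~\ref{cor:noextension} to the pair $E\subseteq\hat H$ to conclude that $E$ is $\d$-closed in $\hat H$; this forces each $a_i\in E$, whence $a\in E^n$, as needed.

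The one place where Corollary~\ref{cor:noextension} demands more than Theorem~\ref{thm:16.0.3} did is its second hypothesis $\hat H^\dagger\cap E=E^\dagger$; in the closed $H$-field setting this was automatic, since a Liouville closed field has $E^\dagger=E$, but here $H$ need not be Liouville closed, so this is the step I expect to require an extra idea. It is resolved by elementarity: the inclusion $E^\dagger\subseteq\hat H^\dagger\cap E$ is trivial, and for the reverse, if $b\in E$ and $b=a^\dagger$ for some $a\in\hat H^\times$, then the $\mathcal{L}_{\preceq}$-sentence $\exists y\,(y\neq 0\wedge\der y=b\cdot y)$ with parameter $b\in E$ holds in $\hat H$ and hence in $E$ since $E\preceq\hat H$; any witness $\tilde a\in E^\times$ gives $b=\tilde a^\dagger\in E^\dagger$. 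Together with the condition $C_E=C_{\hat H}$ built into the criterion, this supplies both hypotheses of Corollary~\ref{cor:noextension}, and the proof closes.
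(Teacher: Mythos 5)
Your proposal is correct and follows essentially the same route as the paper: both arguments combine the model-theoretic co-analyzability test of \cite[Proposition~6.1]{ADHdim} with Corollary~\ref{cor:noextension}, verify the hypothesis on logarithmic derivatives by elementarity, and reduce the general case to zero sets of single differential polynomials via the coordinate projections and \cite[Lemma~1.2]{ADHdim}. The only cosmetic difference is that the paper applies the criterion to the theory $\operatorname{Th}(H_A)$ with $A$ the finite set of nonzero coefficients of $P$ (so the relevant elementary pair need only contain $A$ rather than all of $H$), which changes nothing of substance.
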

\begin{proof}
If $E$ is an elementary extension of $H$, then $E^\dagger\cap H=H^\dagger$. Hence
for each~${P\in H\{Y\}^{\ne}}$, the zero set $\Zero(P)\subseteq H$ 
is co-analyzable by Corollary~\ref{cor:noextension}
and by  \cite[Pro\-po\-si\-tion~6.1]{ADHdim} applied to the $\mathcal L_{c,A}$-theory $\operatorname{Th}(H_A)$ where $A$ is  the finite set of nonzero coefficients of $P$. This special case implies the general case, since
for each~${S\subseteq H^n}$ with $\dim S=0$ there are $P_1,\dots,P_n\in H\{Y\}^{\ne}$  such that  
 $S$ is contained in  $\Zero(P_1)\times\cdots\times\Zero(P_n)$. 
\end{proof}

\noindent
Likewise, using
Theorem~\ref{noextension} instead of Corollary~\ref{cor:noextension} yields:

\begin{cor}\label{kupon}
If $K$ is $\upo$-free and newtonian, then
each set~$S\subseteq K^n$ such that~$\dim S=0$   is co-analyzable.
\end{cor}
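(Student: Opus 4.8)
The plan is to transport the proof of Corollary~\ref{cor:coanal} to the present, more general setting, replacing the appeal to Corollary~\ref{cor:noextension} by Theorem~\ref{noextension}. First I would reduce to a single zero set. Given $S\subseteq K^n$ with $\dim S=0$, each coordinate projection $\pi_i(S)$ is nonempty of dimension $0$, so by \cite[Lemma~1.2]{ADHdim} we have $\pi_i(S)\subseteq\Zero(P_i)$ for suitable $P_i\in K\{Y\}^{\ne}$, whence $S\subseteq\Zero(P_1)\times\cdots\times\Zero(P_n)$. As co-analyzability is inherited by definable subsets and by finite products, it suffices to show that $\Zero(P)\subseteq K$ is co-analyzable for each single $P\in K\{Y\}^{\ne}$.

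For this I would apply the model-theoretic test \cite[Proposition~6.1]{ADHdim} to the $\mathcal L_{c,A}$-theory $\operatorname{Th}(K_A)$, where $A$ is the finite set of nonzero coefficients of $P$. The content of the test is that co-analyzability of $\Zero(P)$ relative to $C$ amounts to the following: for every elementary extension $K^\ast\succeq K$ adding no new constants (so $C_{K^\ast}=C$) and every $a\in\Zero(P)(K^\ast)$, one has $a\in K$.

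The crux is to deduce this from Theorem~\ref{noextension}. I would put $L:=K\langle a\rangle\subseteq K^\ast$. Then $L$ is a $\d$-valued field extension of $K$ of $H$-type, with $C_L=C$ because $C\subseteq L\subseteq K^\ast$ and $C_{K^\ast}=C$. Furthermore $L^\dagger\cap K=K^\dagger$: the inclusion $\supseteq$ is immediate, while $\subseteq$ follows from $(K^\ast)^\dagger\cap K=K^\dagger$, which in turn holds because for $b\in K$ the condition $\exists y\,(y\neq 0\wedge y^\dagger=b)$ is first-order and hence preserved along $K\preceq K^\ast$. Since $K$ is $\upo$-free and newtonian, Theorem~\ref{noextension} then shows that $K$ is $\d$-closed in $L$; as $a$ is $\d$-algebraic over $K$, this forces $a\in K$, as needed.

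The step I expect to require the most care is matching the constant-field hypothesis of Theorem~\ref{noextension} to the test. A typical elementary extension of $K$ has a strictly larger constant field, so Theorem~\ref{noextension} does not apply to it directly; what makes the argument work is that co-analyzability relative to $C$ is precisely the notion that restricts attention to constant-field-preserving configurations, so that the hypothesis $C_L=C$ is exactly what \cite[Proposition~6.1]{ADHdim} supplies. Verifying that this test retains the same shape for a general $\upo$-free newtonian $\d$-valued field $K$ of $H$-type with algebraically closed constants — and not only for the closed $H$-fields treated in \cite[Section~6]{ADHdim} — is the one point where the transfer from Corollary~\ref{cor:coanal} is not purely mechanical.
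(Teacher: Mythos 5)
Your proposal is correct and follows essentially the same route as the paper: the paper proves Corollary~\ref{kupon} by repeating the proof of Corollary~\ref{cor:coanal} verbatim — reduction to sets $\Zero(P_1)\times\cdots\times\Zero(P_n)$, the co-analyzability test of \cite[Proposition~6.1]{ADHdim} applied to $\operatorname{Th}(K_A)$, and the observation that elementary extensions satisfy $(K^*)^\dagger\cap K=K^\dagger$ — with Theorem~\ref{noextension} substituted for Corollary~\ref{cor:noextension}. Your extra detour through $L=K\langle a\rangle$ and your closing caveat about the scope of the test are harmless elaborations of what the paper treats as immediate.
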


\noindent
Thus for $K$, $S$  as in Corollary~\ref{kupon} and countable $C$, all $S\subseteq K^n$
of dimension $0$ are countable, by \cite[Pro\-po\-si\-tion~6.1]{ADHdim}. 
Note that Corollary~\ref{cor:coanal} applies to the valued differential field $\T_{\log}$ of logarithmic transseries
from [ADH, Appendix~A]\endnote{Allen Gehret has   a different proof that the zero set of each nonzero univariate differential polynomial
over $\T_{\log}$ is co-analyzable.}.

\section{Relative Differential Closure in $H$-fields}\label{sec:diff cl H-fields}

\noindent
In this section we turn to relative differential closure in the $H$-field setting, and
we use Theorem~\ref{thm:16.0.3} to relate $\d$-closedness to the elementary substructure property
and $\d$-closure to Newton-Liouville closure.

Let $\mathcal L_\der=\{0,1,{-},{+},{\,\cdot\,},{\der}\}$  be the language of differential rings, 
a sublanguage of the language $\mathcal L:=\mathcal{L}_\der \cup\{\le, \preceq\}$ of ordered valued differential rings (see [ADH, p.~678]). 
 In  this section we  also let~$M$ be an $H$-closed field  and
$H$ a pre-$H$-subfield of~$M$ whose valuation ring and constant field we denote by $\mathcal{O}$ and $C$.
Construing~$H$ and~$M$ as $\mathcal L$-structures in the usual way,
$H$ is an $\mathcal L$-substructure of~$M$.
We also use the sublanguage~$\mathcal L_{\preceq}:=\mathcal L_\der\cup\{ {\preceq} \}$ of~$\mathcal L$, so $\mathcal L_{\preceq}$ is the language of valued differential rings. We
expand the $\mathcal L_\der$-structure~$H[\imag]$ to an   $\mathcal L_{\preceq}$-structure by interpreting~$\preceq$ 
as the dominance relation associated to the valuation ring $\mathcal O+\mathcal O\imag$ of $H[\imag]$;
we expand likewise~$M[\imag]$ to an   $\mathcal L_{\preceq}$-structure by interpreting~$\preceq$ 
as the dominance relation associated to the valuation ring $\mathcal O_{M[\imag]}=\mathcal O_M+\mathcal O_M\imag$ of~$M[\imag]$. 
Then $H[\imag]$ is an $\mathcal L_{\preceq}$-substructure of~$M[\imag]$.
By~$H\preceq_{\mathcal{L}} M$ we mean that $H$ is an elementary $\mathcal L$-substructure of $M$, and we use expressions like ``$H[\imag]\preceq_{\mathcal{L}_{\preceq}} M[\imag]$''  in the same way; of course, the two
uses of the symbol $\preceq$ in the latter are unrelated. 

By 
Corollary~\ref{cor:dc base field ext}, $H$ is $\d$-closed in~$M$ iff $H[\imag]$ is $\d$-closed in $M[\imag]$.

\begin{lemma}
Suppose $M$ has small derivation. Then 
$$H\ \preceq_{\mathcal L_\der}\ M\ \Longleftrightarrow\ H[\imag]\ \preceq_{\mathcal L_\der}\ M[\imag].$$
Also, if $H\preceq_{\mathcal L_\der} M$, then $H\preceq_{\mathcal L} M$ and  $H[\imag]\preceq_{\mathcal{L}_{\preceq}} M[\imag]$.
\end{lemma}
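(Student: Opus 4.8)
The plan is to separate the role of the interpretation $H\rightsquigarrow H[\imag]$ from the definability of the extra predicates. Throughout I use the uniform two‑dimensional interpretation that presents $H[\imag]$ inside $H$: an element $a+b\imag$ is coded by the pair $(a,b)\in H^2$, the ring operations and the derivation act by the usual formulas (so $\der(a,b)=(a',b')$ and $\imag^2=-1$), and — once a valuation is at hand — the dominance relation is coded by $(a,b)\preceq(c,d):\Leftrightarrow a^2+b^2\preceq c^2+d^2$, since $\abs{a+b\imag}$ behaves like $\sqrt{a^2+b^2}$. These are the same formulas for every base, and they send the inclusion $H\subseteq M$ to the inclusion $H[\imag]\subseteq M[\imag]$; hence, by the general fact that interpretations preserve elementary embeddings, $H\preceq_{\mathcal L_{\der}}M$ yields $H[\imag]\preceq_{\mathcal L_{\der}}M[\imag]$. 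This is the forward direction of the biconditional.

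For the two ``also'' clauses I would argue that the extra symbols are $\mathcal L_{\der}$-definable and transfer. Since $M$ is $H$-closed it is real closed, so $x\le y$ is defined by $\exists z\,(y-x=z^2)$; and by the definability of the valuation in $H$-closed fields ([ADH]) the relation $\preceq$ on $M$ is $\mathcal L_{\der}$-definable as well. As $H\subseteq M$ is an $\mathcal L$-substructure and $H\preceq_{\mathcal L_{\der}}M$, these same formulas define $\le$ and $\preceq$ on $H$; replacing every $\le,\preceq$ in an $\mathcal L$-formula by its $\mathcal L_{\der}$-definition then gives $H\preceq_{\mathcal L}M$, in particular $H\preceq_{\mathcal L_{\preceq}}M$. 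Feeding this into the interpretation above, now carrying the coded $\preceq$, produces $H[\imag]\preceq_{\mathcal L_{\preceq}}M[\imag]$.

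It remains to prove $H[\imag]\preceq_{\mathcal L_{\der}}M[\imag]\Rightarrow H\preceq_{\mathcal L_{\der}}M$, and this is the main obstacle. One cannot relativize, because the real subfield $H$ is \emph{not} definable in $H[\imag]$ — even in $\mathcal L_{\preceq}$ with parameter $\imag$ — since complex conjugation is an $\mathcal L_{\preceq}$-automorphism moving $M$, as do automorphisms fixing any prescribed finite set of parameters. I would therefore route around definability via model completeness. By the previous paragraph $T:=\operatorname{Th}_{\mathcal L_{\der}}(M)$ is the reduct, along the $\mathcal L_{\der}$-definitions of $\le$ and $\preceq$, of the complete and model complete $\mathcal L$-theory of $H$-closed fields with small derivation; hence $T$ is itself complete and model complete. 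It thus suffices to show $H\models T$, i.e.\ $H\equiv_{\mathcal L_{\der}}M$: model completeness together with $H\subseteq M$ then upgrades this to $H\preceq_{\mathcal L_{\der}}M$ (indeed to $H\preceq_{\mathcal L}M$).

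To obtain $H\equiv_{\mathcal L_{\der}}M$ I would prove that $H$ is itself $H$-closed, so that completeness of the theory of $H$-closed fields gives $H\equiv M$. From $H[\imag]\equiv_{\mathcal L_{\der}}M[\imag]$ and the algebraic closedness of $M[\imag]$ we get that $H[\imag]$ is algebraically closed, whence $H$ is real closed. The remaining closedness properties are moved between $H$ and its complexification by the standard reflection principles used in the proof of Corollary~\ref{cor:noextension}: via $H$'s own (pre-$H$-field) valuation and the relationship $K=H[\imag]$, the assertions that $H$ is Liouville closed, $\upo$-free, and newtonian are equivalent to their complexified counterparts for $H[\imag]$ ([ADH, 11.7.23, 14.5.7]), which hold because $H[\imag]\equiv M[\imag]$; thus $H$ is $H$-closed. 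The delicate point I would isolate as a preliminary is that these valued-differential properties on the $H[\imag]$ side are captured at the $\mathcal L_{\der}$-level — equivalently, that $\preceq$ is $\mathcal L_{\der}$-definable on complexified $H$-closed fields — since it is exactly this definability that lets $\mathcal L_{\der}$-elementary equivalence of the complexifications drive the reflection.
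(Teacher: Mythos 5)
The forward implication and the two ``also'' clauses in your proposal are essentially the paper's argument (interpretation of $H[\imag]$ in $H$ on pairs; $\le$ via squares; $\mathcal O_M$ as the convex hull of $C_M$) and are fine. The genuine gap is in the converse $H[\imag]\preceq_{\mathcal L_\der}M[\imag]\Rightarrow H\preceq_{\mathcal L_\der}M$. Your central claim there --- that the real subfield is \emph{not} $\mathcal L_\der$-definable in $H[\imag]$ --- is false, and your justification fails: complex conjugation fixes $M$ \emph{pointwise} (it is the identity on $M$ and sends $\imag\mapsto-\imag$), so it is no witness that an automorphism moves $M$. In fact the paper's entire proof of the converse rests on exactly this definability: since $M\equiv_{\mathcal L_\der}\T$ by [ADH, 16.6.3], [ADH, 10.7.10] supplies an $\mathcal L_\der$-formula defining $M$ in $M[\imag]$; under the hypothesis $H[\imag]\preceq_{\mathcal L_\der}M[\imag]$ the same formula defines $M\cap H[\imag]=H$ in $H[\imag]$, and relativizing quantifiers to it gives $H\preceq_{\mathcal L_\der}M$ in one line.

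The detour you construct to route around definability does not close the gap. First, to pull $\upo$-freeness, newtonianity and Liouville closedness down from $H[\imag]$ to $H$ via [ADH, 11.7.23, 14.5.7], you need the \emph{valued} differential structure of $H[\imag]$ (with valuation ring $\mathcal O+\mathcal O\imag$) to be controlled by the $\mathcal L_\der$-elementary equivalence; as you concede in your final sentence, this amounts to the $\mathcal L_\der$-definability of $\preceq$ on $M[\imag]$, which you leave unproved and which is essentially the definability fact you rejected (it follows from definability of $M$, e.g.\ via $f\preceq g\Leftrightarrow f\bar f\preceq g\bar g$, conjugation being definable once $M$ is). Second, model completeness of the $\mathcal L_\der$-reduct of a model complete $\mathcal L$-theory is not automatic: one must check that the $\mathcal L_\der$-definitions of $\le$ and $\preceq$ are preserved both upward and downward between $\mathcal L_\der$-substructures that are models, and for $\preceq$ (whose negation involves a universal quantifier over the constants) this requires an argument you do not give. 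So as written the backward direction is not established; the intended proof is the relativization argument above.
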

\begin{proof}
The  forward direction in the equivalence is obvious. For the converse, let~$H[\imag]\preceq_{\mathcal L_\der}M[\imag]$.
We have  $M\equiv_{\mathcal{L}_{\der}}\mathbb T$ by [ADH, 16.6.3].
Then  [ADH, 10.7.10] yields an $\mathcal L_\der$-formula defining $M$ in $M[\imag]$,  so the same formula defines $M\cap H[\imag]=H$ in~$H[\imag]$, and thus $H\preceq_{\mathcal L_\der} M$. For the ``also'' part, use that the squares of $M$ are the nonnegative elements in its ordering,  that $\mathcal O_M$ is then definable as the convex hull of $C_M$ in $M$ with respect to this ordering, and
if $H\preceq_{\mathcal L_\der} M$, then each $\mathcal L_\der$-formula defining $\mathcal O_M$ in $M$ also defines $\mathcal O=\mathcal O_M\cap H$ in $H$.
\end{proof}

\noindent
By [ADH, 14.5.7, 14.5.3], if $H$ is $H$-closed, then $H[\imag]$ is weakly $\d$-closed.
The next proposition complements Theorem~\ref{thm:16.0.3} and~[ADH,   16.2.5]:  

\begin{prop}\label{prop:16.0.3 converse}
The following are equivalent:
\begin{enumerate}
\item[\textup{(i)}] $H$ is $\d$-closed in $M$;
\item[\textup{(ii)}]  $C=C_M$ and $H\preceq_{\mathcal L} M$;
\item[\textup{(iii)}] $C=C_M$ and $H$ is $H$-closed.
\end{enumerate}
\end{prop}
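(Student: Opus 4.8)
The plan is to prove the cycle (iii)$\Rightarrow$(i)$\Rightarrow$(ii)$\Rightarrow$(iii). Two of the links are short. For (iii)$\Rightarrow$(i): if $C=C_M$ and $H$ is $H$-closed, then $H$ is a closed $H$-field and $M$ is an $H$-field extension of $H$ with the same constant field, so Theorem~\ref{thm:16.0.3} gives that $H$ is $\d$-closed in $M$. For (ii)$\Rightarrow$(iii): from $H\preceq_{\mathcal L}M$ we get $H\equiv_{\mathcal L}M$, and since being an $H$-closed field is axiomatizable in $\mathcal L$ and $M$ is $H$-closed, $H$ is $H$-closed; moreover $C=C_M$ is part of (ii).

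The substance is (i)$\Rightarrow$(ii). Assume $H$ is $\d$-closed in $M$. First, $1$-$\d$-closedness and \eqref{eq:1-d-closed} give $C=C_M$ (and $H^\dagger=M^\dagger\cap H$). I then aim to show that $H$ is $H$-closed, after which the elementary-substructure statement follows formally. Several defining properties of an $H$-closed field transfer to $H$ by pulling zeros back along $\d$-closedness: by \eqref{eq:0-d-closed}, $H$ is algebraically closed in $M$, hence real closed since $M$ is; the residue-field identity forced by $C=C_M$ makes the pre-$H$-field $H$ an $H$-field; and $H$ is Liouville closed because for each $a\in H$ a primitive and an exponential primitive of $a$ exist in the Liouville closed field $M$, each being a zero of an order-one differential polynomial over $H$, hence lying in $H$ by $1$-$\d$-closedness.

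The remaining and decisive properties are $\upo$-freeness and newtonianity of $H$; here pulling back single zeros is not enough, since Newton degree is only stable under field extension once $\upo$-freeness is available. To obtain them I pass to complexifications: by Corollary~\ref{cor:dc base field ext}, $H[\imag]$ is $\d$-closed, hence weakly $r$-$\d$-closed for every $r$, in $M[\imag]$, and $M[\imag]$ is an $\upo$-free, newtonian, $\d$-valued field of $H$-type with algebraically closed constant field $C_M[\imag]$ by [ADH, 14.5.7] (using that $M$ is real closed). Since $\upo$-freeness and newtonianity are governed in the \textup{[ADH]} framework by the solvability of (conjugates of) linear differential equations, Lemma~\ref{lem:weakly r-d-closed}, applied to $H[\imag]$ and to its conjugates by active $\phi\in H$ (which preserve $\d$-closedness), transfers the requisite $r$-linear surjectivity and $(r{+}1)$-linear closedness from $M[\imag]$ down to $H[\imag]$. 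Together with the easily transferred facts that $H[\imag]$ is $\d$-valued of $H$-type with asymptotic integration and has algebraically closed constant field $C[\imag]=C_M[\imag]$, this shows $H[\imag]$ is $\upo$-free and newtonian, so that $H[\imag]$ satisfies the axioms characterizing the $\mathcal L_\der$-theory of $M[\imag]$. \emph{This transfer of $\upo$-freeness and newtonianity is the main obstacle}, and is the reason for working in the complexification and for having isolated Lemma~\ref{lem:weakly r-d-closed}.

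Finally, (i) forces $H\neq C$ (otherwise $M$, being Liouville closed, contains a primitive of $1$, which is $\d$-algebraic over $H=C$ but not in $C$, contradicting (i)); so there is an active $\phi\in H$, and compositional conjugation by such $\phi$ (under which (i), (ii), (iii) are all invariant) lets me assume $M$ has small derivation. Then $H[\imag]$ is a substructure of $M[\imag]$ and a model of the same model-complete $\mathcal L_\der$-theory, so, an embedding between models of a model-complete theory being elementary, $H[\imag]\preceq_{\mathcal L_\der}M[\imag]$. The Lemma preceding the proposition now yields $H\preceq_{\mathcal L}M$, and with $C=C_M$ this is (ii).
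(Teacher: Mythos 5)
Your outer structure (the cycle (iii)$\Rightarrow$(i)$\Rightarrow$(ii)$\Rightarrow$(iii), with (iii)$\Rightarrow$(i) from Theorem~\ref{thm:16.0.3} and (ii)$\Rightarrow$(iii) by axiomatizability) is fine, as is the easy harvest in (i)$\Rightarrow$(ii): that $C=C_M$ and that $H$ is a real closed, Liouville closed $H$-field all follow from \eqref{eq:0-d-closed}, \eqref{eq:weakly 1-d-closed}, \eqref{eq:1-d-closed} as you say. But the step you yourself flag as decisive --- transferring $\upo$-freeness and newtonianity --- does not work as written. The premise that these two properties are ``governed by the solvability of (conjugates of) linear differential equations'' is false. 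Lemma~\ref{lem:weakly r-d-closed} only yields $r$-linear surjectivity and $(r+1)$-linear closedness of $H[\imag]$, and these are strictly weaker than newtonianity: newtonianity demands zeros $y\preceq 1$ of arbitrary \emph{quasilinear} differential polynomials (any order and degree, Newton degree $1$), not just of linear ones. (The paper's own discussion of $\Ex$ after Corollary~\ref{cor:d-perfect => 1-newt} illustrates how far apart linear surjectivity and newtonianity sit.) Worse, $\upo$-freeness is a condition on the asymptotic couple --- the absence of a pseudolimit of the sequence $\bomega_\rho$ --- and no amount of linear surjectivity or linear closedness implies it. So your argument establishes neither property, and the subsequent model-completeness step has no model of the right theory to apply to.

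The paper closes this gap by a different mechanism, which you would need. For $\upo$-freeness: since $M$ is $H$-closed it is Schwarz closed, so $M=\omega(M)\cup\sigma\big(\Upg(M)\big)$; membership in $\omega(M)$ (resp.\ $\sigma(\Upg(M))$) is witnessed by a zero of an order-$1$ (resp.\ order-$2$) differential polynomial over $H$, so $1$- and $2$-$\d$-closedness pull this decomposition back to $H=\omega(H)\cup\sigma\big(\Upg(H)\big)$, whence $H$ is Schwarz closed and in particular $\upo$-free \textup{[ADH, 11.8.33, 14.2.20]}. Only \emph{then} can one get newtonianity, via Lemma~\ref{lem:r-newtonian descends}: the now-available $\upl$-freeness of $H$ guarantees that a quasilinear $P$ over $H$ stays quasilinear over $M$, so it has a zero $y\preceq 1$ in $M$, which lies in $H$ by $\d$-closedness. (Note that Lemma~\ref{lem:r-newtonian descends} is stated for exactly this purpose and needs no passage to $H[\imag]$.) Finally, the paper gets (iii)$\Leftrightarrow$(ii) directly from \textup{[ADH, 16.2.5]} rather than re-deriving the elementary-substructure statement through the complexification; your model-completeness route could be made to work once (iii) is in hand, but it is not needed.
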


\noindent
First a lemma, where {\em extension\/} refers to an extension of valued differential fields, and where~$r\in\N$. 

\begin{lemma}\label{lem:r-newtonian descends} 
Let $K$ be a  $\upl$-free $H$-asymptotic field which is $r$-$\d$-closed in an $r$-newtonian ungrounded $H$-asymptotic extension $L$. Then $K$ is also $r$-newtonian.
\end{lemma}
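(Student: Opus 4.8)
Since $K$ is $\upl$-free, the Newton degree $\ndeg_K P$ is defined for every $P\in K\{Y\}^{\neq}$, and by definition $K$ is $r$-newtonian once we show that every quasilinear $P\in K\{Y\}^{\neq}$ of order $\le r$ (i.e.\ with $\ndeg_K P=1$) has a zero in the maximal ideal $\smallo_K$. I would prove the slightly more general statement: for each $P\in K\{Y\}^{\neq}$ with $\order(P)\le r$ and $\ndeg_K P\ge 1$ there is a zero of $P$ in $\smallo_K$. So fix such a $P$. The strategy is to first produce the desired zero inside $L$ and then push it back into $K$ using that $K$ is $r$-$\d$-closed in $L$.

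The key step is to verify that $P$ retains positive Newton degree over the extension $L$, so that the $r$-newtonianity of $L$ can be applied to it. For this I would invoke the monotonicity of Newton degree under $H$-asymptotic extensions from the Newton-degree calculus of [ADH]: since $K$ is $\upl$-free and $L$ is an $H$-asymptotic extension of $K$, one has $\ndeg_L P\ge \ndeg_K P\ge 1$. (Heuristically, $\ndeg$ counts, with multiplicity, the zeros in $\smallo$ that $P$ acquires in a newtonization, and such zeros persist in any extension; here one only needs the bookkeeping for $P$ of order $\le r$, which is exactly the range in which $L$ is newtonian.) Because $\order(P)\le r$ and $L$ is $r$-newtonian with $\ndeg_L P\ge 1$, the defining property of $r$-newtonianity then yields a zero $y\in\smallo_L$ of $P$.

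It remains to descend this zero. As $P\in K\{Y\}^{\neq}$ has order $\le r$ and $y\in L$ satisfies $P(y)=0$, the hypothesis that $K$ is $r$-$\d$-closed in $L$ forces $y\in K$. Since the valuation of $L$ restricts to that of $K$, we have $\smallo_L\cap K=\smallo_K$, so $y\in\smallo_K$ is a zero of $P$ in $\smallo_K$. Hence every such $P$ has a zero in $\smallo_K$, and $K$ is $r$-newtonian.

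The main obstacle is the middle paragraph: guaranteeing that the Newton degree does not drop on passage from $K$ to the possibly much larger field $L$, and that the Newton-degree machinery is genuinely available over $L$ even though $L$ is assumed only $r$-newtonian rather than fully newtonian (this is precisely why one restricts throughout to differential polynomials of order $\le r$). Once $\ndeg_L P\ge 1$ is secured, both the extraction of a zero $y\in\smallo_L$ and its descent to $\smallo_K$ follow immediately from the hypotheses, so the whole argument hinges on this monotonicity.
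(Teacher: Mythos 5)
Your proof is correct and takes essentially the same route as the paper's: the whole argument rests on the fact that quasilinearity (positive Newton degree) of $P$ persists on passage from the $\upl$-free $K$ to the $H$-asymptotic extension $L$ --- for which the paper cites \cite[Lemma~1.7.9]{ADH4} --- after which $r$-newtonianity of $L$ yields a zero $y\preceq 1$ of $P$ in $L$ and $r$-$\d$-closedness places $y$ in $K$. (Only note that newtonianity asks for a zero in $\mathcal{O}$ rather than in $\smallo$, which if anything makes your task easier and does not affect the argument.)
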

\begin{proof}
Let $P\in K\{Y\}^{\neq}$ be quasilinear of order~$\leq r$. Then $P$ remains quasilinear when viewed as differential polynomial over $L$, by 
\cite[Lemma~1.7.9]{ADH4}.  Hence $P$ has a zero~$y\preceq 1$ in $L$, which lies in $K$ since $K$ is $r$-$\d$-closed in $L$.
\end{proof}

\begin{proof}[Proof of Proposition~\ref{prop:16.0.3 converse}] Assume (i).  Then $C=C_M$ and $H$ is a Liouville closed $H$-field, by \eqref{eq:0-d-closed}, \eqref{eq:weakly 1-d-closed}, and \eqref{eq:1-d-closed}.
We have $\omega(M)\cap H=\omega(H)$ since $H$ is weakly $1$-$\d$-closed in~$M$, and
$\sigma\big(\Upg(M)\big)\cap H=\sigma\big(\Upg(M)\cap H\big)=\sigma\big(\Upg(H)\big)$ since~$H$ is $2$-$\d$-closed in $M$ and~$\Upg(M)\cap H=\Upg(H)$ by~[ADH, p.~520]. Now~$M$ is
Schwarz closed~[ADH, 14.2.20], so $M=\omega(M)\cup\sigma\big(\Upg(M)\big)$, hence also~$H=\omega(H)\cup\sigma\big(\Upg(H)\big)$, thus 
$H$ is  
Schwarz closed [ADH, 11.8.33]; in particular, $H$ is $\upo$-free.
By Lemma~\ref{lem:r-newtonian descends}, $H$ is newtonian.
This shows (i)~$\Rightarrow$~(iii). The implication  (iii)~$\Rightarrow$~(i) is Theorem~\ref{thm:16.0.3}, and~(iii)~$\Leftrightarrow$~(ii) follows from [ADH, 16.2.5].
\end{proof}

\noindent
Next a consequence of [ADH, 16.2.1], but note first that $H(C_M)$ is an $H$-subfield of $M$
and $\d$-algebraic over $H$, and  recall that each $\upo$-free $H$-field
has a Newton-Liouville closure,   as defined in~[ADH, p.~669].
\[
\xymatrix@R-1pc@C-1pc{
& M \ar@{-}[d] & \\
& H(C_M) & \\
C_M \ar@{-}[ur] & & H \ar@{-}[ul]
}
\]
\begin{cor}\label{cor:nlc, 1} 
If $H$ is  $\upo$-free, then the differential closure of~$H$ in $M$ is a New\-ton-Liou\-ville closure of the $\upo$-free $H$-subfield $H(C_M)$ of $M$. 
\end{cor}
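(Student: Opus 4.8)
Write $D$ for the differential closure of $H$ in $M$. The plan is to check that $D$ meets the defining conditions of a Newton-Liouville closure of $H(C_M)$, with the work split between Proposition~\ref{prop:16.0.3 converse}, used to see that $D$ is itself $H$-closed, and [ADH, 16.2.1], used to recognize an $H$-closed extension that is $\d$-algebraic over $H(C_M)$ as a Newton-Liouville closure of $H(C_M)$.

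First I would record the cheap facts. Each $c\in C_M$ satisfies $Y'=0$ and so is $\d$-algebraic over $\Q\subseteq H$; together with $H\subseteq D$ this gives $H(C_M)\subseteq D$, and by its definition $D$ is $\d$-algebraic over $H$, hence over the intermediate field $H(C_M)$. Moreover $H(C_M)$ is $\upo$-free, as the statement records: this constant adjunction leaves the asymptotic couple of $H$ unchanged, so $H(C_M)$ inherits $\upo$-freeness from $H$ and its Newton-Liouville closure is available.

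The crux is to show that $D$ is $H$-closed with $C_D=C_M$. As a differential subfield of the $H$-field $M$, the field $D$ is a pre-$H$-subfield of $M$, and by construction it is $\d$-closed in $M$. I would therefore apply Proposition~\ref{prop:16.0.3 converse} with $D$ in place of $H$: condition~(i) holds for $D$, so condition~(iii) gives both $C_D=C_M$ and that $D$ is $H$-closed. This is the step carrying the real content, as it turns the algebraic fact that $D$ is $\d$-closed in $M$ into the structural conclusion that $D$ is a closed $H$-field.

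To conclude I would assemble the pieces: $D$ is an $H$-closed $H$-field extension of $H(C_M)$, it is $\d$-algebraic over $H(C_M)$, and $C_D=C_M$ agrees with the constant field of $H(C_M)$, so no new constants arise over this base. By [ADH, 16.2.1] such an extension is a Newton-Liouville closure of $H(C_M)$, as claimed. The main point to watch is that the correct base is $H(C_M)$ rather than $H$: since $C_D=C_M$ may strictly contain $C$, the field $D$ can carry constants transcendental over $H$ and hence need not be minimal over $H$, while over $H(C_M)$ these constants are already present --- which is exactly why the statement must be phrased in terms of $H(C_M)$.
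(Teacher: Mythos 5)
Your proof is correct and follows exactly the route the paper intends: the paper states this corollary without a written proof, merely as ``a consequence of [ADH, 16.2.1]'' after noting that $H(C_M)$ is an $\upo$-free $H$-subfield of $M$ that is $\d$-algebraic over $H$, and your argument --- using Proposition~\ref{prop:16.0.3 converse} to see that the $\d$-closure is $H$-closed with constant field $C_M$, then invoking [ADH, 16.2.1] over the base $H(C_M)$ --- is precisely that intended argument, with the correct explanation of why the base must be $H(C_M)$ rather than $H$.
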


\noindent
For (pre-)$\HLO$-fields, see [ADH, 16.3], and for the Newton-Liouville closure of a pre-$\HLO$-field see~[ADH, 16.4.8].
Let $\boldsymbol M$ be the expansion of $M$ to a $\HLO$-field, and  let   $\boldsymbol H$, $\boldsymbol H(C_M)$ be the   expansions of  $H$, $H(C_M)$, respectively, to   pre-$\HLO$-subfields of $\boldsymbol M$;  then   $\boldsymbol H(C_M)$ is a $\HLO$-field.
By   Pro\-po\-si\-tion~\ref{prop:16.0.3 converse}, the  $\d$-closure~$H^{\operatorname{da}}$ of $H$ in~$M$ is $H$-closed and hence has a unique expansion  $\boldsymbol H^{\operatorname{da}}$  
to a $\HLO$-field. 
Then~$\boldsymbol H\subseteq \boldsymbol H(C_M)\subseteq \boldsymbol H^{\operatorname{da}}\subseteq\boldsymbol M$.
 
\begin{cor}\label{cor:nlc, 2}  
The $\HLO$-field $\boldsymbol H^{\operatorname{da}}$ is a New\-ton-Liouville closure of~$\boldsymbol H(C_M)$.
\end{cor}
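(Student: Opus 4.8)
The plan is to bootstrap the assertion from the level of $H$-fields, where Corollary~\ref{cor:nlc, 1} already does the work, up to the level of $\HLO$-fields, using two ingredients: the uniqueness of the $\HLO$-expansion of an $H$-closed field, and the description of Newton-Liouville closures of pre-$\HLO$-fields in [ADH, 16.4.8]. Since a Newton-Liouville closure is characterized by its underlying $H$-field structure together with a canonically induced $\HLO$-structure, it suffices to check that $\boldsymbol H^{\operatorname{da}}$ carries exactly this data.

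First I would record the facts supplied by the preceding results. By Corollary~\ref{cor:nlc, 1}, the $\d$-closure $H^{\operatorname{da}}$ of $H$ in $M$ is a Newton-Liouville closure of the $\upo$-free $H$-field $H(C_M)$, viewed purely as $H$-fields. By Proposition~\ref{prop:16.0.3 converse}, $H^{\operatorname{da}}$ is $H$-closed, so by the remarks preceding the corollary it has a unique $\HLO$-expansion, namely $\boldsymbol H^{\operatorname{da}}$. Moreover $H^{\operatorname{da}}$, being the $\d$-closure of $H$, is $\d$-algebraic over $H$, hence $\d$-algebraic over the intermediate field $H(C_M)$ with $\boldsymbol H(C_M)\subseteq\boldsymbol H^{\operatorname{da}}$.

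Next I would invoke [ADH, 16.4.8], which presents a Newton-Liouville closure of a pre-$\HLO$-field $\boldsymbol E$ as an $\HLO$-field extension $\boldsymbol F\supseteq\boldsymbol E$ whose underlying $H$-field is a Newton-Liouville closure of $E$, equipped with the $\HLO$-structure induced on this $H$-closed field. Applying this with $\boldsymbol E=\boldsymbol H(C_M)$: the candidate underlying $H$-field is $H^{\operatorname{da}}$, which is a Newton-Liouville closure of $H(C_M)$ by the previous paragraph, and the required $\HLO$-structure extending that of $\boldsymbol H(C_M)$ lives on the $H$-closed field $H^{\operatorname{da}}$ and is therefore unique. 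Since $\boldsymbol H^{\operatorname{da}}$ is the unique $\HLO$-expansion of $H^{\operatorname{da}}$ and restricts to $\boldsymbol H(C_M)$, it coincides with that induced structure, so $\boldsymbol H^{\operatorname{da}}$ is a Newton-Liouville closure of $\boldsymbol H(C_M)$.

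The one point demanding care is the compatibility of the two $\HLO$-structures on $H^{\operatorname{da}}$: the one arising intrinsically as $\boldsymbol H^{\operatorname{da}}$, and the one arising from the Newton-Liouville closure construction of the pre-$\HLO$-field $\boldsymbol H(C_M)$ in [ADH, 16.4.8]. I expect this to be exactly where the uniqueness of the $\HLO$-expansion of an $H$-closed field is decisive: both structures extend $\boldsymbol H(C_M)$ and are defined on the same $H$-closed field $H^{\operatorname{da}}$, so they must agree. Beyond correctly matching the definition in [ADH, 16.4.8] to the present data, I anticipate no substantive obstacle.
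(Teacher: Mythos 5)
Your argument has a genuine gap: it routes the whole proof through Corollary~\ref{cor:nlc, 1}, which carries the hypothesis that $H$ is $\upo$-free, whereas Corollary~\ref{cor:nlc, 2} is stated for an arbitrary pre-$H$-subfield $H$ of $M$. The entire point of passing to $\HLO$-expansions here is to drop that hypothesis: when $H$ (hence possibly $H(C_M)$) fails to be $\upo$-free, the Newton--Liouville closure of the $H$-field $H(C_M)$ is not even defined (the paper only recalls that each \emph{$\upo$-free} $H$-field has one), so your first step --- ``the candidate underlying $H$-field is $H^{\operatorname{da}}$, which is a Newton--Liouville closure of $H(C_M)$'' --- has no content in general, and your reading of [ADH, 16.4.8] as reducing the $\HLO$-notion to the $H$-field notion plus an induced structure cannot be correct unconditionally for the same reason. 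What makes the pre-$\HLO$-field $\boldsymbol H(C_M)$ better behaved is precisely the extra $\Uplambda$- and $\Upomega$-data, which pins down a Newton--Liouville closure without any $\upo$-freeness assumption.

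The paper argues directly at the $\HLO$-level: let $\boldsymbol H(C_M)^{\operatorname{nl}}$ be a Newton--Liouville closure of the $\HLO$-field $\boldsymbol H(C_M)$; since $\boldsymbol H^{\operatorname{da}}$ is $H$-closed and extends $\boldsymbol H(C_M)$, the universal property gives an embedding $\boldsymbol H(C_M)^{\operatorname{nl}}\to\boldsymbol H^{\operatorname{da}}$ over $\boldsymbol H(C_M)$, and any such embedding is onto because its image is a closed $H$-field that is $\d$-closed in $\boldsymbol H^{\operatorname{da}}$ by Theorem~\ref{thm:16.0.3}, while $H^{\operatorname{da}}$ is $\d$-algebraic over $H(C_M)$. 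If you add the hypothesis that $H$ is $\upo$-free, your argument can be repaired (the compatibility point you flag is indeed settled by the uniqueness of the $\HLO$-expansion of an $H$-closed field), but then you have only reproved the statement in the case already covered by Corollary~\ref{cor:nlc, 1}.
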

\begin{proof}
Let $\boldsymbol H(C_M)^{\operatorname{nl}}$ be a  Newton-Liouville closure of $\boldsymbol H(C_M)$.
Since $\boldsymbol H^{\operatorname{da}}$   is $H$-closed and extends $\boldsymbol H(C_M)$,  there is an embedding $\boldsymbol H(C_M)^{\operatorname{nl}}\to \boldsymbol H^{\operatorname{da}}$ over $\boldsymbol H(C_M)$, and any such embedding is an isomorphism, thanks to  Theorem~\ref{thm:16.0.3}.
\end{proof}

\section{Relative Differential Closure in Hardy Fields}\label{sec:d-cl hardy}

\noindent
Specializing now to Hardy fields, assume in this section that
$H$ is a Hardy field and set~$K:=H[\imag]\subseteq \Calinf[\imag]$, an $H$-asymptotic extension of $H$. 
By definition, $H$ is $\d$-maximal iff~$H$ is $\d$-closed in every Hardy field extension of $H$. Moreover:

\begin{cor}\label{cor:char d-max}
The following are equivalent:
\begin{enumerate}
\item[\textup{(i)}] $H\supseteq \R$ and $H$ is $H$-closed;
\item[\textup{(ii)}] $H$ is $\d$-maximal;
\item[\textup{(iii)}] $H$ is $\d$-closed in some  $\d$-maximal Hardy field extension of~$H$.
\end{enumerate}
If one of these conditions holds, then $K$ is weakly $\d$-closed.
\end{cor}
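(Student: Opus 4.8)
The plan is to derive all three equivalences directly from the two master results already in hand, Theorem~\ref{thm:maxhardymainthm} and Proposition~\ref{prop:16.0.3 converse}, so that the corollary becomes a reformulation in terms of $\d$-maximality. The equivalence (i)~$\Leftrightarrow$~(ii) is read off Theorem~\ref{thm:maxhardymainthm}, whose condition ``$H\supseteq\R$ and $H$ is a closed $H$-field'' is our (i) (``closed $H$-field'' and ``$H$-closed'' being synonymous) and whose condition ``$H$ is $\d$-maximal'' is our (ii). The implication (ii)~$\Rightarrow$~(iii) is immediate, since a $\d$-maximal $H$ is a $\d$-maximal Hardy field extension of itself in which it is trivially $\d$-closed.

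For (iii)~$\Rightarrow$~(i) I would argue as follows. Let $M$ be a $\d$-maximal Hardy field extension of $H$ in which $H$ is $\d$-closed. By Theorem~\ref{thm:maxhardymainthm}, $M\supseteq\R$ and $M$ is an $H$-closed field; being a Hardy field containing $\R$, it has constant field $C_M=\R$. Thus $H$ is a pre-$H$-subfield of the $H$-closed field $M$ and is $\d$-closed in $M$, so the implication (i)~$\Rightarrow$~(iii) of Proposition~\ref{prop:16.0.3 converse} applies and yields $C=C_M=\R$ together with $H$ being $H$-closed. Since $C=\R$ forces $H\supseteq\R$, this is exactly (i).

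The concluding assertion is then immediate: once (i) holds, $H$ is $H$-closed, and by [ADH, 14.5.7, 14.5.3] (recorded just before Proposition~\ref{prop:16.0.3 converse}) the complexification $K=H[\imag]$ is weakly $\d$-closed. There is no real obstacle here; the only point to check with care is that the hypotheses of Proposition~\ref{prop:16.0.3 converse} are genuinely met, namely that a $\d$-maximal Hardy field is $H$-closed with constant field $\R$ (supplied by Theorem~\ref{thm:maxhardymainthm}) and that $H$ sits inside it as a pre-$H$-subfield (automatic, since every Hardy field is a pre-$H$-field and this structure passes to Hardy subfields).
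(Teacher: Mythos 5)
Your proposal is correct and follows essentially the same route as the paper: (i)$\Leftrightarrow$(ii) from Theorem~\ref{thm:maxhardymainthm}, (ii)$\Rightarrow$(iii) trivially, (iii)$\Rightarrow$(i) via the implication (i)$\Rightarrow$(iii) of Proposition~\ref{prop:16.0.3 converse}, and the final assertion from the remark (citing [ADH, 14.5.7, 14.5.3]) preceding that proposition. Your extra verification that a $\d$-maximal $M$ is $H$-closed with $C_M=\R$ and that $H$ is a pre-$H$-subfield of $M$ is exactly the (routine) checking the paper leaves implicit.
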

\begin{proof}
The equivalence (i)~$\Leftrightarrow$~(ii) is part of Theorem~\ref{thm:maxhardymainthm}, (ii)~$\Rightarrow$~(iii) is trivial, and (iii)~$\Rightarrow$~(i) follows from the implication (i)~$\Rightarrow$~(iii) of
Proposition~\ref{prop:16.0.3 converse}. For the rest, use the remark before Proposition~\ref{prop:16.0.3 converse}.
\end{proof}

\noindent
Thus by Corollary~\ref{cor:dc base field ext}:

\begin{cor}\label{cor:d-max weakly d-closed}
If $H$ is $\d$-maximal and~$E$ is a Hardy field extension of~$H$, then
$K$ is $\d$-closed in $E[\imag]$.
\end{cor}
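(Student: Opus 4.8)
The plan is to reduce the complexified assertion to its real counterpart and then invoke $\d$-maximality directly; the bridge is Corollary~\ref{cor:dc base field ext}, which transfers $\d$-closedness between a real differential-field extension and its complexification by $\imag$. First I would record the two hypotheses of that corollary for the extension $H\subseteq E$: since $E$ is a Hardy field it is an ordered field, so $-1$ is not a square in $E$; and $\imag\in\Calinf[\imag]$ lies in a differential field extension of $E$ with $\imag^2=-1$. With these in place, Corollary~\ref{cor:dc base field ext} (applied with $H$ and $E$ in the roles of its $K$ and $L$) gives the equivalence
\[
\text{$H$ is $\d$-closed in $E$}\quad\Longleftrightarrow\quad\text{$H[\imag]=K$ is $\d$-closed in $E[\imag]$.}
\]

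It therefore suffices to establish the left-hand side, namely that $H$ is $\d$-closed in $E$. But this is exactly what $\d$-maximality of $H$ provides: as recorded at the start of this section, $H$ is $\d$-maximal iff $H$ is $\d$-closed in \emph{every} Hardy field extension of itself, and $E$ is by hypothesis such an extension. Feeding this back through the equivalence above yields that $K$ is $\d$-closed in $E[\imag]$, as desired.

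I do not expect a genuine obstacle here: the whole argument is a one-line chaining of Corollary~\ref{cor:dc base field ext} with the reformulation of $\d$-maximality, and in particular no appeal to $H\supseteq\R$ or to the finer structure theory is needed. The only point meriting a moment's care is the verification that the hypotheses of Corollary~\ref{cor:dc base field ext} hold in the Hardy-field setting, i.e.\ that $-1$ is not a square in $E$; this is immediate since $E$, as a Hardy field, is an ordered field.
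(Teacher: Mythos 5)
Your proposal is correct and is exactly the paper's argument: the corollary is stated immediately after the remark that $\d$-maximality of $H$ means $H$ is $\d$-closed in every Hardy field extension, and is derived by the single word ``Thus by Corollary~\ref{cor:dc base field ext}.'' Your additional verification that $-1$ is not a square in $E$ is the right (routine) check of the hypotheses of that corollary.
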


\noindent
Using Theorem~\ref{noextension} we can strengthen  Corollary~\ref{cor:d-max weakly d-closed}:

\begin{cor}\label{dmaxdom} Suppose $H$ is $\d$-maximal and $L\supseteq K$ is a differential subfield of~$\Calinf[\imag]$  such that $L$ is a $\d$-valued $H$-asymptotic extension of $K$ with respect to some dominance relation on $L$. 
Then $K$ is $\d$-closed in $L$. 
\end{cor}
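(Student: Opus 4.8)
The plan is to reduce the whole statement to Theorem~\ref{noextension}, applied to the pair $K\subseteq L$ with $K=H[\imag]$. The only genuine work is to check the hypotheses of that theorem; once they are in place, the conclusion is immediate.

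First I would unpack the standing assumptions from $\d$-maximality. Since $H$ is $\d$-maximal, Corollary~\ref{cor:char d-max} gives $H\supseteq\R$ and $H$ is $H$-closed, so $H$ is Liouville closed (hence real closed), $\upo$-free, and newtonian. Because $H$ is real closed, $K=H[\imag]$ is algebraically closed, so $K^\dagger$ is divisible; moreover $K$ is $\d$-valued of $H$-type with constant field $\C$, and $K$ is $\upo$-free and newtonian by [ADH, 11.7.23, 14.5.7]. Thus $K$ meets both the standing hypotheses of the subsection containing Theorem~\ref{noextension} and the requirement there that $K$ be $\upo$-free and newtonian.

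The substantive step is to verify $L^\dagger\cap K=K^\dagger$. Here I would invoke Corollary~\ref{cor:fexphii}, whose hypotheses are that $H\supseteq\R$ is Liouville closed with $\I(K)\subseteq K^\dagger$ and that $L$ is a differential subfield of $\Calinf[\imag]$ containing $K$. The inclusion $\I(K)\subseteq K^\dagger$ holds because $K$ is $\upo$-free and newtonian (by [ADH, 14.2.5]), and the remaining hypotheses are part of the assumptions of the present corollary. Corollary~\ref{cor:fexphii} then yields $L^\dagger\cap K=K^\dagger$. I would stress that this corollary needs no valuation-theoretic information about $L$, only that $L$ is a differential subfield of $\Calinf[\imag]$ over $K$; this is exactly what allows us to treat the \emph{arbitrary} $L$ of the statement, and it is the one place where the ambient ring $\Calinf[\imag]$ (through polar coordinates and the non-oscillation of germs) really enters. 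I expect this to be the crux, even though the heavy lifting has already been done in Corollary~\ref{cor:fexphii}.

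It remains to record the constant-field condition and assemble the pieces. Since $L\subseteq\Calinf[\imag]$, every constant of $L$ is a complex number, so $C_L\subseteq\C$; as $\C=C_K\subseteq C_L$, we get $C_L=\C=C$. The dominance relation furnished in the hypothesis makes $L$ a $\d$-valued $H$-asymptotic (that is, $H$-type) extension of $K$. Therefore $L$ is a $\d$-valued extension of $K$ of $H$-type with $C_L=C$ and $L^\dagger\cap K=K^\dagger$, and Theorem~\ref{noextension} applies to give that $K$ is $\d$-closed in $L$, as desired.
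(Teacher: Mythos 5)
Your proposal is correct and follows essentially the same route as the paper: establish that $K$ is $\upo$-free and newtonian via [ADH, 11.7.23, 14.5.7], obtain $L^\dagger\cap K=K^\dagger$ from Corollary~\ref{cor:fexphii}, and apply Theorem~\ref{noextension}. The extra details you supply (checking $\I(K)\subseteq K^\dagger$ and $C_L=\C$) are left implicit in the paper but are exactly the right verifications.
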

\begin{proof}  The $\d$-valued field $K$ is $\upo$-free and newtonian by [ADH, 11.7.23, 14.5.7]. Also $L^\dagger\cap K=K^\dagger$ by Corollary~\ref{cor:fexphii}. Now apply Theorem~\ref{noextension}.
\end{proof} 

\noindent
We do not require  the dominance relation on $L$ in Corollary~\ref{dmaxdom} to be the restriction to $L$ of the relation $\preceq$ on $\c[\imag]$.  

A differential subfield $L$ of $\Calinf[\imag]$ is said to {\it come from a Hardy field}\/ if~${L=E[\imag]}$ for some Hardy field $E$. 
By \cite[Section~2]{ADH5} this is equivalent to: $\imag \in L$ and $\Re f,\Im f\in L$ for all $f\in L$. Corollary~\ref{dmaxdom} for $L$ coming from a Hardy field
falls under Corollary~\ref{cor:d-max weakly d-closed}. However, not every differential subfield of $\Calinf[\imag]$ containing~$\C$ comes from a Hardy field:
 \cite[Section~5]{ADH5} has
an example of a differential subfield~${L\supseteq \C}$ of $\Gom[\imag]$ not coming from a Hardy field, yet  the relation  $\preceq$ on $\c[\imag]$    restricts to a dominance relation on $L$
making~$L$ a $\d$-valued field of $H$-type. In the next example (not used later) we employ a variant of this construction 
to obtain~$H$,~$K$,~$L$ as in Corollary~\ref{dmaxdom} where~$L$
equipped with the restriction of $\preceq$ is
a $\d$-valued field of $H$-type, but  $L$ doesn't come from a Hardy field:

\begin{example} 
Let $M$ be a maximal analytic Hardy field. Then~$M$ is $H$-closed by Corollary~\ref{thm:extend to H-closed},
the $\d$-closure $H$  of $\R$ in $M$ is a $\d$-maximal  Hardy field by Corollary~\ref{cor:char d-max}, and no~$h\in H$ is transexponential
 by \cite[Lem\-ma~5.1]{ADH5}.
Theo\-rem~1.3 in~\cite{Boshernitzan86} and \cite[Corollary~5.24]{ADH5}) yield 
a transexponential~$z\in M$. Take any $h\in\R(x)$ with $0\ne h\prec 1$, and put~$y:=z(1+h\ex^{x\imag})\in\Gom[\imag]$.
Now $z$ is $H$-hardian, 
so by~\cite[Lemma~5.16]{ADH5}, $y$ generates a differential subfield~$L_0:=H\langle y\rangle$
of $\Gom[\imag]$, and~$\preceq$ restricts to a dominance relation on $L_0$ which makes $L_0$ a $\d$-valued field
of $H$-type with constant field~$\R$. Then~$L:=L_0[\imag]$ is a differential subfield of $\Gom[\imag]$ with
constant field $\C$, and~$\preceq$ on~$\c[\imag]$ restricts to a dominance relation on~$L$ that makes $L$ a $\d$-valued field
of $H$-type extending $K$ as a $\d$-valued field and which  does not come from a Hardy field, since $\Im y=zh\sin x$ oscillates. 
By Corollary~\ref{dmaxdom}, $K$ is $\d$-closed in $L=K\langle y\rangle$.
\end{example}

\noindent
The following is  Theorem~B from the introduction:

\begin{cor}\label{corhe}
Suppose $H\supset\R$ has \textup{DIVP}, and $E\supseteq H$ is a Hardy field. Then 
$$H \text{ is $\d$-closed in  $E$}\ \Longleftrightarrow\  E\cap \exp(H) \subseteq H.$$
\end{cor}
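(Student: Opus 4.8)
The plan is to derive the equivalence from Corollary~\ref{cor:noextension}, after first checking that $H$ meets that corollary's hypotheses and then translating its two side-conditions into the single condition $E\cap\exp(H)\subseteq H$. So I would first argue that $H$ is $H$-closed: being a Hardy field with $H\supset\R$ and DIVP, $H$ is Liouville closed, so Theorem~\ref{thm:maxhardymainthm} makes it $\d$-maximal, and Corollary~\ref{cor:char d-max} then gives that $H$ is $H$-closed; in particular $H$ is a real closed, $\upo$-free, newtonian $H$-field closed under integration, with $C_H=\R$.

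Next I would note that the constant-field condition is free. Since $E\supseteq H\supseteq\R$ is a Hardy field, $E$ is an $H$-field extension of $H$, and the constants of any Hardy field containing $\R$ are exactly the real numbers, so $C_E=\R=C_H$. Thus Corollary~\ref{cor:noextension} applies and, the condition $C_E=C_H$ holding automatically, it reduces the claim to
\[
H\text{ is $\d$-closed in }E\quad\Longleftrightarrow\quad E^\dagger\cap H=H^\dagger,
\]
after which it remains to show $E^\dagger\cap H=H^\dagger\Longleftrightarrow E\cap\exp(H)\subseteq H$.

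For "$\Leftarrow$" of this last equivalence I would take $b\in E^\dagger\cap H$, say $b=y^\dagger$ with $y\in E^\times$; replacing $y$ by $\abs{y}\in E^>$ (same logarithmic derivative) we may assume $y>0$. Using that $H$ is closed under integration, pick $B\in H$ with $B'=b$. Then $(\ex^{B})^\dagger=b=y^\dagger$, so $(y/\ex^{B})^\dagger=0$ and hence $y=c\,\ex^{B}=\ex^{B+\log c}$ for some $c\in C_E^>=\R^>$; as $B+\log c\in H$ this exhibits $y\in\exp(H)$, and since $y\in E$ the hypothesis gives $y\in E\cap\exp(H)\subseteq H$, whence $b=y^\dagger\in H^\dagger$. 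The reverse inclusion $H^\dagger\subseteq E^\dagger\cap H$ is trivial. For "$\Rightarrow$", let $f=\ex^{h}\in E\cap\exp(H)$ with $h\in H$; then $f^\dagger=h'\in E^\dagger\cap H=H^\dagger$, say $h'=w^\dagger$ with $w\in H^\times$, so $(f/w)^\dagger=0$ and $f=c\,w\in H$ with $c\in\R^\times$. (The same computation, read through the order-one equation $f'=h'f$, also yields the forward implication of the corollary directly: any $f\in E\cap\exp(H)$ is $\d$-algebraic over $H$, hence lies in $H$ as soon as $H$ is $\d$-closed in $E$.)

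The step that carries the real weight is the very first one: extracting from the bare hypotheses "$H\supset\R$ and $H$ has DIVP" that $H$ is Liouville closed, so that Theorem~\ref{thm:maxhardymainthm} applies and delivers $H$-closedness (hence $\upo$-freeness, newtonianity, and closure under integration). This is what unlocks Corollary~\ref{cor:noextension} and supplies the integration used above; everything afterward is the bookkeeping of this paragraph, whose only delicate points are the automatic equality $C_E=C_H=\R$, which eliminates the constant-field side-condition, and the use of integration-closure to present the relevant $y$ as an element of $\exp(H)$.
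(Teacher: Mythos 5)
There is a genuine gap, and it sits exactly where you say ``the real weight'' is carried: the claim that $H\supset\R$ with DIVP must be Liouville closed is false. DIVP gives you zeros of differential polynomials only between points where a sign change is already witnessed in $H$; it does not force $\exp(H)\subseteq H$. The paper makes this explicit in the remarks immediately following this corollary: it constructs an analytic Hardy field $H\supset\R$ with DIVP that is \emph{not} Liouville closed (the specialization $F$ of $\T$ with respect to the convex subgroup $\Delta$ generated by $v(x^{-1})$, transported into $\Gom$ via \cite[Corollary~7.9]{AvdD}). Consequently your appeal to Theorem~\ref{thm:maxhardymainthm}(iii) and Corollary~\ref{cor:char d-max} to get $\d$-maximality and $H$-closedness collapses. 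A further warning sign: had your first step been correct, the corollary would be vacuous --- a Liouville closed $H$ has $\exp(H)\subseteq H$, so the right-hand side would hold automatically, and a $\d$-maximal $H$ is $\d$-closed in every Hardy field extension, so the left-hand side would too. The entire point of the statement is that $H$ need not be Liouville closed, and $E\cap\exp(H)\subseteq H$ is then a genuine condition.

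The repair is to notice that Corollary~\ref{cor:noextension} does not require $H$-closedness, only that $H$ be an $\upo$-free newtonian real closed $H$-field. The paper gets $\upo$-freeness and newtonianity directly from \cite[Corollary~2.6]{ADHip} (which does not pass through Liouville closure), real closedness by applying DIVP to ordinary polynomials, and closure under integration from newtonianity via [ADH, 14.2.2]. From that point on your argument agrees with the paper's: $C_E=\R=C_H$ is automatic, and the translation between $E^\dagger\cap H=H^\dagger$ and $E\cap\exp(H)\subseteq H$ via $\log y\in H$ (using closure under integration and $\R\subseteq H$) is exactly the paper's bookkeeping. So everything after your first paragraph stands; only the justification of the hypotheses of Corollary~\ref{cor:noextension} needs to be rerouted.
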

\begin{proof}
By \cite[Corollary~2.6]{ADHip}, $H$ is $\upo$-free and newtonian. In particular, $H$ is closed under integration, by [ADH, 14.2.2]. Applying \textup{DIVP} to ordinary polynomials, we see that $H$ is also real closed. 
It is clear that 
if  $H$ is $\d$-closed in~$E$, then~${E\cap \exp(H) \subseteq H}$. Conversely,
suppose that $E\cap\exp(H)\subseteq H$. If $y\in E^>$ with~${y^\dagger\in H}$, then~${\log y\in H}$ and so $y \in E\cap\exp(H)\subseteq H$. This yields~$E^\dagger\cap H=H^\dagger$. Hence~$H$ is $\d$-closed in~$E$ by Corollary~\ref{cor:noextension}. \end{proof}

\begin{remarks}
Suppose $H\supset\R$  has DIVP.  Then $H$ is  closed under integration and real closed by the proof of Corollary~\ref{corhe}, so $H$  is Liouville closed iff~${\exp(H)\subseteq H}$. Thus by  \cite[Corollary~2.6]{ADHip},
if $\exp(H)\subseteq H$, then $H$ is $H$-closed.
For an example of an analytic $H\supset\R$ with  DIVP that is not Liouville closed, consider the specialization~$F$
of $\T$ with respect to the convex subgroup $$\Delta\ :=\ \big\{ \gamma\in \Gamma_{\T}:\ \text{$\abs{\gamma}\le n\,v(x^{-1})$ for some $n$}\big\}$$
of its value group $\Gamma_{\T}=v(\T^\times)$. Then $F$ is an $H$-field with constant field $\R$ and value group $\Delta$, and $F$ has DIVP but is not
Liouville closed:
see \cite[Lemma~14.3 and subsequent remark]{hf2}. 
By [ADH, 14.1.2, 15.0.2], the $\Delta$-coarsening of the valued differential field $\T$ is $\d$-henselian,
hence [ADH, 7.1.3] yields an embedding $F\to\mathbb T$ of differential fields that is the identity on $\R$, and $F$ and $\T$ being real closed, this is even an embedding of
ordered valued differential fields. Now $\T$ is isomorphic over $\R$ to an analytic Hardy field containing $\R$ by \cite[Corollary~7.9]{AvdD}. Hence $F$ is isomorphic to an analytic Hardy field $H\supset \R$ with DIVP that is not Liouville closed. 
\end{remarks} 

\noindent
Recall from Section~\ref{sec:prelims} that the $\d$-perfect hull $\operatorname{D}(H)$ of $H$   is defined as the intersection of all $\d$-maximal Hardy field extensions of $H$.
By the next result we only need to consider  here  $\d$-algebraic Hardy field extensions of $H$:

\begin{cor}
$$\operatorname{D}(H)\ =\ \bigcap\big\{ M: \text{$M$ is a $\d$-maximal $\d$-algebraic Hardy field extension of $H$} \big\}.$$
\end{cor}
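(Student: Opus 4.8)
The plan is to prove the two inclusions separately. Write $\mathcal D$ for the right-hand side, the intersection of all $\d$-maximal $\d$-algebraic Hardy field extensions of $H$. Since every such extension is in particular a $\d$-maximal Hardy field extension of $H$, the family of fields defining $\mathcal D$ is a subfamily of the one defining $\Dx(H)$; intersecting over a smaller family yields a larger set, so the inclusion $\Dx(H)\subseteq\mathcal D$ is immediate. (This family is nonempty, since Zorn produces a $\d$-maximal $\d$-algebraic Hardy field extension of $H$, as recalled in Section~\ref{sec:prelims}.)

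For the reverse inclusion $\mathcal D\subseteq\Dx(H)$, I would fix $f\in\mathcal D$ together with an \emph{arbitrary} $\d$-maximal Hardy field extension $M$ of $H$, and aim to show $f\in M$; since $M$ is arbitrary and $\Dx(H)$ is by definition the intersection of all such $M$, this yields $f\in\Dx(H)$. The key object is the differential closure $H^{\operatorname{da}}$ of $H$ in $M$, that is, the set of $y\in M$ that are $\d$-algebraic over $H$. This is a differential subfield of $M$, hence a Hardy field, and it is a $\d$-algebraic Hardy field extension of $H$ contained in $M$; by transitivity of $\d$-algebraicity it is moreover $\d$-closed in $M$.

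The crux is to show that $H^{\operatorname{da}}$ is itself $\d$-maximal, so that it belongs to the family defining $\mathcal D$ and therefore contains $f$. For this I would invoke Corollary~\ref{cor:char d-max} applied to $H^{\operatorname{da}}$ in place of $H$: the field $M$ is a $\d$-maximal Hardy field extension of $H^{\operatorname{da}}$ in which $H^{\operatorname{da}}$ is $\d$-closed, so condition (iii) of that corollary holds for $H^{\operatorname{da}}$, whence $H^{\operatorname{da}}$ is $\d$-maximal by the implication (iii)$\Rightarrow$(ii). Consequently $H^{\operatorname{da}}$ is a $\d$-maximal $\d$-algebraic Hardy field extension of $H$, so $f\in H^{\operatorname{da}}\subseteq M$, as needed.

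The main obstacle is precisely this $\d$-maximality of the intermediate field $H^{\operatorname{da}}$: everything hinges on being able to recognize that the differential closure of $H$ inside a $\d$-maximal (equivalently $H$-closed) Hardy field is again $\d$-maximal. This is exactly what Corollary~\ref{cor:char d-max} packages, ultimately resting on Proposition~\ref{prop:16.0.3 converse} (that $\d$-closedness in an $H$-closed field forces $H$-closedness) and on Theorem~\ref{thm:16.0.3}. Once this structural input is in hand, the remainder is formal bookkeeping about intersections of families of fields, and no further computation is required.
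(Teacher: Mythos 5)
Your proposal is correct and is essentially identical to the paper's own proof: both reduce to the inclusion $\supseteq$, take the differential closure $E=H^{\operatorname{da}}$ of $H$ in an arbitrary $\d$-maximal extension $M$, and apply Corollary~\ref{cor:char d-max} (via condition (iii)) to conclude that $E$ is $\d$-maximal and $\d$-algebraic over $H$, hence contains $f$.
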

\begin{proof}
We only  need to show the inclusion~``$\supseteq$''. So let $f$ be an element of every $\d$-maximal $\d$-algebraic Hardy field extension of $H$, and let $M$ be any $\d$-maximal Hardy field extension of $H$; we need to show $f\in M$. Let $E$ be the $\d$-closure of~$H$ in $M$.
Then~$E$ is $\d$-algebraic over~$H$, and by Corollary~\ref{cor:char d-max}, $E$ is $\d$-maximal. Hence~$f\in E$, and thus~$f\in M$ as required.
\end{proof}

\noindent
We can now also prove a variant of Lemma~\ref{lem:Dx Ex}   for $\Ginf$- and $\Gom$-Hardy fields:

\begin{cor}\label{cor:D(H) smooth}
Suppose $H$ is a $\Ginf$-Hardy field. Then
\begin{align*}
\operatorname{D}(H)\ &=\ \bigcap\big\{ M: \text{$M\supseteq H$ is a $\d$-maximal $\Ginf$-Hardy field} \big\} \\
&=\ \big\{ f\in \operatorname{E}^\infty(H): \text{$f$ is $\d$-algebraic over $H$}\big\}.
\end{align*}
Likewise with $\omega$ in place of $\infty$.
\end{cor}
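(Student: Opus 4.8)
The plan is to prove the two claimed equalities by a short cycle of inclusions, carrying out the smooth case $r=\infty$ in detail; the analytic case $r=\omega$ is then identical, using the ``likewise'' clauses of Proposition~\ref{prop:Hardy field ext smooth} and Corollary~\ref{thm:extend to H-closed}. Write $A:=\operatorname{D}(H)$, let $B$ denote the displayed intersection over all $\d$-maximal $\Ginf$-Hardy field extensions of $H$, and set $C:=\{f\in\operatorname{E}^\infty(H): f \text{ is }\d\text{-algebraic over }H\}$.

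First I would establish $A=B$ purely by comparing the families being intersected. For $A\subseteq B$: every $\d$-maximal $\Ginf$-Hardy field extension of $H$ is, in particular, a $\d$-maximal Hardy field extension of $H$, so the family defining $B$ is a subfamily of the one defining $A=\operatorname{D}(H)$, and intersecting over a smaller family can only enlarge. For $B\subseteq A$: by the preceding corollary, $\operatorname{D}(H)$ is the intersection over all $\d$-maximal $\d$-algebraic Hardy field extensions of $H$; since $H$ is smooth, Proposition~\ref{prop:Hardy field ext smooth} makes each such extension smooth, hence itself a $\d$-maximal $\Ginf$-Hardy field extension of $H$. Thus the family from the preceding corollary is a subfamily of the one defining $B$, so that $B\subseteq A$.

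Next, for $A=C$. The inclusion $A\subseteq C$ is immediate: $\operatorname{D}(H)$ is $\d$-algebraic over $H$ (recalled in Section~\ref{sec:prelims}), and each $\Ginf$-maximal Hardy field extension of $H$ is $\d$-maximal (by Proposition~\ref{prop:Hardy field ext smooth}, exactly as in the remarks preceding it), hence lies in the family defining $A$; therefore $A\subseteq\operatorname{E}^\infty(H)$, and combining gives $A\subseteq C$. For $C\subseteq A=B$, I would argue as in Lemma~\ref{lem:Dx Ex}: fix $f\in C$ and an arbitrary $\d$-maximal $\Ginf$-Hardy field extension $M$ of $H$, and use Zorn (Corollary~\ref{thm:extend to H-closed}) to extend $M$ to a $\Ginf$-maximal Hardy field $N\supseteq M\supseteq H$. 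Then $f\in\operatorname{E}^\infty(H)\subseteq N$, so $M\langle f\rangle\subseteq N$ is a $\Ginf$-Hardy field that is $\d$-algebraic over $M$; since $M$ is $\d$-maximal, this forces $M\langle f\rangle=M$, i.e., $f\in M$. As $M$ was arbitrary, $f\in B=A$.

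The only real content is this last ``extend-and-collapse'' step; everything else is bookkeeping with families of extensions. The point to watch is that each auxiliary extension stays within the prescribed smoothness class -- that $M\langle f\rangle$ remains a $\Ginf$-Hardy field and that the Zorn extension $N$ may be taken $\Ginf$-maximal -- which is precisely what Proposition~\ref{prop:Hardy field ext smooth} and Corollary~\ref{thm:extend to H-closed} guarantee (together with their $\Gom$-analogues for the analytic case). I expect no genuine obstacle beyond correctly tracking the direction of each subfamily inclusion.
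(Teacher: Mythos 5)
Your proof is correct, and its decisive step --- extend a $\d$-maximal smooth extension $M$ to a $\Ginf$-maximal Hardy field $N$, observe $f\in\operatorname{E}^\infty(H)\subseteq N$, and collapse $M\langle f\rangle=M$ by $\d$-maximality --- is exactly the paper's. Where you diverge is in how the reduction to \emph{smooth} $\d$-maximal extensions is organized. The paper proves the hard inclusion $\{f\in\operatorname{E}^\infty(H):\ \text{$f$ $\d$-algebraic over $H$}\}\subseteq\operatorname{D}(H)$ directly: it starts from an arbitrary $\d$-maximal Hardy field extension $E$ of $H$ (not assumed smooth), passes to $F:=E\cap\Ginf$, checks via Proposition~\ref{prop:Hardy field ext smooth} that $F$ is $\d$-closed in $E$ and hence $\d$-maximal by Corollary~\ref{cor:char d-max}, and only then runs the extend-and-collapse step inside $F$. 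You instead dispose of non-smooth extensions once and for all by proving $\operatorname{D}(H)=B$ from the immediately preceding corollary (whose defining family consists of $\d$-algebraic, hence by Proposition~\ref{prop:Hardy field ext smooth} smooth, extensions) and then only ever work with smooth $M$. The two routes are logically equivalent in strength --- your appeal to the preceding corollary imports Corollary~\ref{cor:char d-max} implicitly, since that corollary's proof uses it --- so nothing is gained or lost; yours arguably makes the family-inclusion bookkeeping more transparent. One small citation slip: the existence of a $\Ginf$-maximal Hardy field $N\supseteq M$ is plain Zorn (as recalled in the introduction and preliminaries), not Corollary~\ref{thm:extend to H-closed}, which produces a $\d$-algebraic $H$-closed extension rather than a $\c^\infty$-maximal one.
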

\begin{proof}
With both equalities replaced by  ``$\subseteq$'', this follows from the definitions and the remarks
following Proposition~\ref{prop:Hardy field ext smooth}.
Let $f\in \operatorname{E}^\infty(H)$ be $\d$-algebraic over $H$; we claim that $f\in\operatorname{D}(H)$.
To prove this claim, let $E$ be a $\d$-maximal Hardy field extension of~$H$; it is enough to show that then $f\in E$.
Now $F:=E\cap\Ginf$ is a $\Ginf$-Hardy field extension of~$H$ which is $\d$-closed in $E$, by Proposition~\ref{prop:Hardy field ext smooth}, and hence $\d$-maximal by Corollary~\ref{cor:char d-max}. Thus we may replace $E$ by $F$ to arrange that $E\subseteq\Ginf$, and then take a $\Ginf$-maximal Hardy field
extension $M$ of $E$. Now~$f\in \operatorname{E}^\infty(H)$ gives~$f\in M$, and $E$ being $\d$-maximal and $f$ being $\d$-algebraic over $E$ yields $f\in E$. The proof for~$\omega$ in place of $\infty$ is similar.
\end{proof}

\noindent
We say that $H$ is {\it bounded}\/ if there is a germ $\phi\in\c$ such that $h\le \phi$ for all $h\in H$.
No maximal Hardy field, no maximal smooth Hardy field, and no maximal analytic Hardy field
is bounded. However,
if $H$ has countable cofinality (as ordered set), then $H$ is bounded. (See \cite[Section~5]{ADH5}.)
If $H$ is bounded, then $\Ex(H)$ is $\d$-algebraic over $H$ (see \cite[Theorem~5.20]{ADH5}) and hence $\Dx(H)=\Ex(H)$ by Lemma~\ref{lem:Dx Ex}. 
If in addition $H\subseteq\c^\infty$, then $\Ex^\infty(H)$ is also $\d$-algebraic over $H$, and likewise
with $\omega$ in place of $\infty$  \cite[Theorem~5.20]{ADH5}. 
Combined  with Corollary~\ref{cor:D(H) smooth}, this yields:

\begin{cor}\label{cor:Bosh Conj 1}
If $H\subseteq\Ginf$ is  bounded, then $\Dx(H)=\Ex(H)=\Ex^\infty(H)$. Likewise with $\omega$ in place of $\infty$.
\end{cor}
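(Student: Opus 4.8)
The plan is to extract the statement from the two ``collapsing'' identities already available, namely Lemma~\ref{lem:Dx Ex} and Corollary~\ref{cor:D(H) smooth}, by feeding them the $\d$-algebraicity of the relevant perfect hulls. The essential input is the boundedness of $H$: as recorded in the paragraph preceding the corollary, boundedness forces $\Ex(H)$ to be $\d$-algebraic over $H$ (via \cite[Theorem~5.20]{ADH5}), and when moreover $H\subseteq\Ginf$ it forces $\Ex^\infty(H)$ (and, with $\omega$ in place of $\infty$, $\Ex^\omega(H)$) to be $\d$-algebraic over $H$ as well. Everything else is bookkeeping with these $\d$-algebraicity facts.

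First I would establish $\Dx(H)=\Ex(H)$. By Lemma~\ref{lem:Dx Ex}, $\Dx(H)$ is exactly the set of $f\in\Ex(H)$ that are $\d$-algebraic over $H$; since boundedness makes \emph{every} element of $\Ex(H)$ $\d$-algebraic over $H$, this set is all of $\Ex(H)$. (This equality is in fact already asserted in the run-up to the corollary.)

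Next I would establish $\Dx(H)=\Ex^\infty(H)$. Here I use that $H\subseteq\Ginf$ is a $\Ginf$-Hardy field, so Corollary~\ref{cor:D(H) smooth} applies and presents $\Dx(H)$ as the set of $f\in\Ex^\infty(H)$ that are $\d$-algebraic over $H$. Boundedness again makes all of $\Ex^\infty(H)$ $\d$-algebraic over $H$, so this set equals $\Ex^\infty(H)$. Combining with the previous step yields $\Dx(H)=\Ex(H)=\Ex^\infty(H)$.

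Finally, the $\omega$ case runs verbatim: Corollary~\ref{cor:D(H) smooth} holds with $\omega$ in place of $\infty$, and \cite[Theorem~5.20]{ADH5} supplies the $\d$-algebraicity of $\Ex^\omega(H)$. I do not expect any genuine obstacle inside this corollary itself: the real work sits upstream, in the transfer of boundedness to $\d$-algebraicity of the perfect hulls (\cite[Theorem~5.20]{ADH5}) and in the identification of $\Dx(H)$ with the $\d$-algebraic part of $\Ex^\infty(H)$ (Corollary~\ref{cor:D(H) smooth}); given those, the corollary is a one-line consequence.
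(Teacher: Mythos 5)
Your proposal is correct and is exactly the paper's argument: the paragraph preceding the corollary derives $\Dx(H)=\Ex(H)$ from boundedness via \cite[Theorem~5.20]{ADH5} and Lemma~\ref{lem:Dx Ex}, and then combines the $\d$-algebraicity of $\Ex^\infty(H)$ (resp.\ $\Ex^\omega(H)$) over $H$ with Corollary~\ref{cor:D(H) smooth} to conclude. Nothing is missing.
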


\noindent
Let   $\Ex:=\Ex(\Q)$ be the perfect hull of the Hardy field $\Q$. From  Corollary~\ref{cor:Bosh Conj 1} we obtain the next result, which 
establishes Theorem~A from the introduction:

\begin{cor}\label{cor:Bosh Conj 1 for H=Q}
 $\Ex\ =\ \Ex^\infty(\Q)\ =\ \Ex^{\omega}(\Q)\ =\ \Dx(\Q)$.
\end{cor}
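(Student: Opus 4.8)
The plan is to obtain this as a direct specialization of Corollary~\ref{cor:Bosh Conj 1} to the base field $H=\Q$. That corollary already proves $\Dx(H)=\Ex(H)=\Ex^\infty(H)$, and likewise with $\omega$ in place of $\infty$, for any bounded $\Ginf$- (respectively $\Gom$-) Hardy field $H$. So all that remains is to confirm that $\Q$ satisfies the two standing hypotheses: that it is an analytic (a fortiori smooth) Hardy field, and that it is bounded.

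First I would observe that $\Q$, realized as the Hardy field of germs of rational-constant functions, lies inside $\Gom$ and hence inside $\Ginf$, since constant functions are analytic. Thus $\Q$ is at once a $\Ginf$-Hardy field and a $\Gom$-Hardy field, so both forms of Corollary~\ref{cor:Bosh Conj 1} are available. Next I would check boundedness in the sense defined just before that corollary: taking $\phi\in\c$ to be the germ of the identity function $x$, each germ of a constant $q\in\Q$ satisfies $q\le\phi$, because $q\le t$ for all sufficiently large $t$; hence a single germ of $\c$ dominates all of $\Q$, so $\Q$ is bounded.

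With these facts in hand, the $\Ginf$-form of Corollary~\ref{cor:Bosh Conj 1} applied to $H=\Q$ gives $\Dx(\Q)=\Ex(\Q)=\Ex^\infty(\Q)$, while its $\Gom$-form gives $\Dx(\Q)=\Ex(\Q)=\Ex^\omega(\Q)$. Concatenating these chains of equalities and recalling that $\Ex=\Ex(\Q)$ by definition yields $\Ex=\Ex^\infty(\Q)=\Ex^\omega(\Q)=\Dx(\Q)$, as required. I do not anticipate any genuine obstacle: Corollary~\ref{cor:Bosh Conj 1} (which in turn rests on \cite[Theorem~5.20]{ADH5} and Corollary~\ref{cor:D(H) smooth}) carries all the substance, and the sole point requiring even minimal care is the verification that the base field $\Q$ is simultaneously analytic and bounded, which is immediate.
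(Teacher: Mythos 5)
Your proposal is correct and is exactly the paper's argument: the paper derives this corollary directly from Corollary~\ref{cor:Bosh Conj 1} applied to $H=\Q$, with the analyticity and boundedness of $\Q$ left as the same immediate verifications you supply. Nothing further is needed.
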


\begin{question}
Do the following implications hold for all $H$?
$$H\subseteq\Ginf\ \Longrightarrow\ \Ex(H)\subseteq\Ex^\infty(H), \qquad H\subseteq\Gom\ \Longrightarrow\ \Ex(H)\subseteq\Ex^\infty(H)\subseteq\Ex^\omega(H).$$
These implications hold if $\Dx(H)=\Ex(H)$, but we don't know whether $\Dx(H)=\Ex(H)$ for all $H$; see also~\cite[p.~144]{Boshernitzan82}. 
\end{question}

 \noindent
By Theorem~\ref{thm:Bosh order 1} below, each $\d$-perfect Hardy field is $1$-$\d$-closed in all its Hardy field extensions.
 Here, $Y$ and $Z$ are distinct indeterminates.  

\begin{theorem}\label{thm:Bosh order 1}
Let  $P\in H[Y,Z]^{\neq}$, and suppose  $y\in\c^1$ lies in a Hausdorff field extension of $H$ and $P(y, y')=0$. Then $y\in \Dx(H)$. 
\end{theorem}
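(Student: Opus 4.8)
The plan is to fix an arbitrary $\d$-maximal Hardy field $M\supseteq H$ and prove $y\in M$; since $y$ is $\d$-algebraic over $H$, this yields $y\in\Dx(H)$ by the definition of $\Dx(H)$ together with Lemma~\ref{lem:Dx Ex}. By Corollary~\ref{cor:char d-max} and Theorem~\ref{thm:maxhardymainthm}, $M\supseteq\R$ is $H$-closed; in particular $M$ is real closed, Liouville closed, $\upo$-free, newtonian, and has \textup{DIVP}. Replacing $H$ by its real closure in $\c$ (a $\d$-algebraic Hardy field extension contained in every $\d$-maximal extension of $H$, so with the same $\d$-perfect hull) I may assume $H$ is real closed. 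If $Z$ does not occur in $P$, or $P(y,Z)$ vanishes identically in $Z$, then $y$ is algebraic over $H$ and hence lies in this real closure, so I am done; thus I assume $\deg_Z P\geq 1$ and $P(y,Z)\neq 0$. Then $y'$ is algebraic over $H(y)$, so $\operatorname{trdeg}(H\langle y\rangle|H)\leq 1$, and differentiating $P(y,y')=0$ expresses $y''$ rationally in $y,y'$ over $H$; bootstrapping shows $y\in\c^{<\infty}$ and $H\langle y\rangle=H(y,y')$.

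Next I would produce a companion solution inside $M$. Away from the (eventually avoided) discriminant locus I write the relation $P(y,y')=0$ as $y'=\Phi(x,y)$ for the appropriate branch $\Phi$ of the algebraic function defined by $P$, a germ of a real-analytic, hence locally Lipschitz, function of its second argument. Applying \textup{DIVP} to the order-one differential polynomial $P(Y,Y')\in M\{Y\}$, I find $w\in M$ with $P(w,w')=0$ lying on the same branch and with the same leading asymptotics as $y$; the sign data needed to trap such a $w$ between two elements of $M$ is read off from the behaviour of $y$ in the Hausdorff field extension of $H$ supplied by hypothesis.

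With $y$ and $w$ both solving $u'=\Phi(x,u)$, local uniqueness for this first-order ODE shows that the two germs cannot cross: if $y$ and $w$ agree at one large argument then $y=w$ as germs and $y\in M$; otherwise $\epsilon:=y-w\in\c^1$ has eventually constant sign. In that case $\epsilon'=\Phi(x,w+\epsilon)-\Phi(x,w)$ is an algebraic function of $\epsilon$ over $M$ (with the parameters $x,w\in M$), so $\epsilon$ satisfies a first-order algebraic differential equation over $M$, is comparable to $0$, and, being confined to a single branch over $w\in M$, lies in a Hausdorff field extension of $M$. By the classical fact that such first-order solutions generate Hardy fields (Rosenlicht~\cite[Theorem~2]{Ros} and \cite{Singer}, in the variant needed here for algebraic $\Phi$), $M\langle\epsilon\rangle=M(\epsilon,\epsilon')$ is a $\d$-algebraic Hardy field extension of $M$; since $M$ is $\d$-maximal, $\epsilon\in M$, whence $y=w+\epsilon\in M$, as required.

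The main obstacle is the middle step: matching $y$ to a solution $w\in M$ on the same branch and establishing non-crossing, i.e.\ upgrading the tameness of $y$ \emph{relative to $H$} (its membership in a Hausdorff field over $H$) to tameness \emph{relative to the far larger field $M$}. This is precisely where the order-one hypothesis is indispensable, since for higher order the analogous non-crossing fails (as the equation $y''+y=\ex^{x^2}$ shows). An alternative way to package the conclusion, using the freshly proved tool of this paper, is to show directly that the germ dominance $\preceq$ of $\c[\imag]$ restricts to a dominance relation on $L:=K\langle y\rangle$, where $K:=M[\imag]$, making $L$ a $\d$-valued $H$-asymptotic extension of $K$ (the transcendence degree being at most $1$ keeps the $H$-type and asymptotic conditions within reach, as in the Example preceding this section); Corollary~\ref{dmaxdom} then forces $K$ to be $\d$-closed in $L$, so the $\d$-algebraic element $y$ lies in $K$ and, being a real germ, in $M$. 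In either formulation the genuine content is the non-oscillation of $y$ relative to $M$.
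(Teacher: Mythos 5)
First, a point of comparison: the paper does not prove Theorem~\ref{thm:Bosh order 1} at all. It is quoted from \cite[Theorem~11.8]{Boshernitzan82} and the detailed proof is deferred to \cite[Theorem~6.3.14]{ADHmax}, so there is no in-paper argument to measure your proposal against. Your opening reduction is fine (fix a $\d$-maximal $M\supseteq H$, show $y\in M$; in fact $y\in\Dx(H)$ then follows from the definition of $\Dx(H)$ alone, without Lemma~\ref{lem:Dx Ex}), and you correctly identify where the difficulty sits. But the proposal does not close that difficulty; it restates it.

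The gap is concentrated in your middle step, in two places. (a) DIVP produces a zero of $P(Y,Y')$ in $M$ strictly between two elements of $M$ at which $P(\cdot,\cdot')$ changes sign; it does not let you select a zero ``on the same branch and with the same leading asymptotics as $y$,'' because at this stage $y$ is not known to be comparable to elements of $M$ at all --- that comparability is exactly the conclusion you are after. The phrase ``the sign data \dots is read off from the behaviour of $y$'' presupposes that expressions in $y$ with coefficients from $M$ have eventually constant sign, i.e.\ non-oscillation of $y$ relative to $M$, which is the content of the theorem (your hypothesis only gives non-oscillation relative to $H$, a much smaller field). (b) The assertion that $\epsilon=y-w$ ``lies in a Hausdorff field extension of $M$'' is the same unproven claim in different clothing, and the appeal to Rosenlicht--Singer ``in the variant needed here for algebraic $\Phi$'' is circular: \cite[Theorem~2]{Ros} treats $y'Q(y)=P(y)$, where $y'$ is a \emph{rational} function of $y$ over the base field, and the ``variant for algebraic $\Phi$'' is precisely Theorem~\ref{thm:Bosh order 1} relativized to $M$. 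The non-crossing argument is also fragile: $y$ and $w$ may lie on different branches of the curve $P=0$, branches merge at the discriminant locus, and Picard--Lindel\"of uniqueness gives nothing across such points, so ``agree at one large argument implies equal as germs'' does not follow. Your alternative packaging via Corollary~\ref{dmaxdom} has the identical gap: verifying that $\preceq$ restricts to a dominance relation on $K\langle y\rangle$ making it an $H$-asymptotic $\d$-valued extension is again the whole problem. (There are also smaller unaddressed points --- you need $P$ chosen so that $\partial P/\partial Z$ does not vanish at $(y,y')$ eventually, and you need $y'$ itself to lie in a Hausdorff field before the $\c^{<\infty}$ bootstrap runs --- but these are repairable; the relative non-oscillation is not, by the methods proposed.)
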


\noindent
This is stated in \cite[Theorem~11.8]{Boshernitzan82}, where the proof is only indicated; for a
detailed proof, see \cite[Theorem~6.3.14]{ADHmax}.
Every $\d$-maximal Hardy field is $1$-newtonian (Theorem~\ref{thm:maxhardymainthm} or \cite[Lemma~11.12]{ADH2}).
Together with   Lemma~\ref{lem:r-newtonian descends}, this yields:

\begin{cor}\label{cor:d-perfect => 1-newt}
Every $\d$-perfect Hardy field  is $1$-newtonian.
\end{cor}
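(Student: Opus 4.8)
The plan is to embed $H$ into a $\d$-maximal Hardy field and then descend $1$-newtonianity using that $H$ is $1$-$\d$-closed there. First I would extract what $\d$-perfectness provides: since $H$ is $\d$-perfect, $\Dx(H)=H$, so by the remarks in Section~\ref{sec:prelims} we have $H\supseteq\R$ and $H$ is a Liouville closed $H$-field; thus $H$ is an $H$-asymptotic field with asymptotic integration, and --- the key point, discussed below --- it is $\upl$-free. Moreover, by the remark preceding the corollary (which rests on Theorem~\ref{thm:Bosh order 1}), $H$ is $1$-$\d$-closed in every Hardy field extension of itself: if $P\in H[Y,Z]^{\neq}$, regarded as a differential polynomial of order $\le 1$ with $Z=Y'$, and $y$ lies in a Hardy field (hence Hausdorff field) extension of $H$ with $P(y,y')=0$, then $y\in\Dx(H)=H$.

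Next I would pick, by Zorn, a $\d$-maximal Hardy field extension $M$ of $H$. If $M=H$ there is nothing to prove, since $\d$-maximal Hardy fields are $1$-newtonian, so assume $M\supsetneq H$. By Theorem~\ref{thm:maxhardymainthm} (or \cite[Lemma~11.12]{ADH2}) $M$ is $1$-newtonian, and being $\d$-maximal it is $H$-closed, hence Liouville closed with asymptotic integration, and therefore ungrounded; of course $M$ is an $H$-asymptotic extension of $H$. All hypotheses of Lemma~\ref{lem:r-newtonian descends} with $r=1$, $K=H$, $L=M$ are now in place: $H$ is a $\upl$-free $H$-asymptotic field that is $1$-$\d$-closed in the $1$-newtonian, ungrounded, $H$-asymptotic extension $M$. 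The lemma then yields that $H$ is $1$-newtonian, as desired.

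The one point demanding care --- and the main obstacle --- is the verification that $H$ itself is $\upl$-free, which is exactly what the descent lemma requires of the base field and is where Liouville closedness (and hence $\d$-perfectness) must be invoked; the remaining hypotheses follow immediately from $M$ being $\d$-maximal together with the $1$-$\d$-closedness coming from Theorem~\ref{thm:Bosh order 1}. In particular, unlike in the proof of Proposition~\ref{prop:16.0.3 converse}, we cannot first obtain Schwarz-closedness and thereby $\upo$-freeness of $H$, because that route uses $2$-$\d$-closedness, whereas here only the order-$1$ information supplied by Theorem~\ref{thm:Bosh order 1} is available.
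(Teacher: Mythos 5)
Your proposal is correct and takes essentially the same route as the paper: the paper also combines Theorem~\ref{thm:Bosh order 1} (giving that a $\d$-perfect $H$ is $1$-$\d$-closed in a $\d$-maximal, hence $1$-newtonian and ungrounded, Hardy field extension) with Lemma~\ref{lem:r-newtonian descends} for $r=1$, the needed $\upl$-freeness of $H$ coming from its being a Liouville closed $H$-field. Your closing remark correctly identifies why the descent here only requires $\upl$-freeness rather than the $\upo$-freeness obtained via Schwarz closedness in the proof of Proposition~\ref{prop:16.0.3 converse}.
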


\noindent
By Theorem~\ref{thm:Bosh order 1} and Corollary~\ref{cor:d-perfect => 1-newt},  $\Ex$ is $1$-$\d$-closed in all its Hardy field extensions and $1$-newtonian.
However, $\Ex$ is not $2$-linearly surjective by \cite[Proposition~3.7]{Boshernitzan87}, 
so $\Ex$ is not weakly $2$-$\d$-closed in any  $\d$-maximal Hardy field extension of $\Ex$ (see Lemma~\ref{lem:weakly r-d-closed})
and   $\Ex$ is not $2$-linearly newtonian (see [ADH, 14.2.2]).

\medskip
\noindent
The material at the end of Section~\ref{sec:diff cl H-fields} has consequences
for the relationship between Newton-Liouville closure and $\d$-closure  in the context of Hardy fields.
For this, we recall from \cite[Section~12]{ADH2}  that every $\d$-maximal Hardy field $M$ has a unique expansion
to a $\HLO$-field $\boldsymbol M$, and that every $H$ has a unique expansion to a pre-$\HLO$-field~$\boldsymbol H$ such that $\boldsymbol H\subseteq\boldsymbol M$ for all $\d$-maximal Hardy fields $M\supseteq H$; we call
$\boldsymbol H$ the {\it canonical $\HLO$-expansion}\/ of $H$.

Let now $M$ be a $\d$-maximal Hardy field extension of~$H$ and 
$H^{\operatorname{da}}$ the $\d$-closure of~$H$ in~$M$, so $H(\R)\subseteq H^{\operatorname{da}}\subseteq M$. 
From Corollary~\ref{cor:nlc, 1} we  obtain: 

\begin{cor}
If $H$ is $\upo$-free, then $H^{\operatorname{da}}$ is a Newton-Liouville closure of~$H(\R)$.
\end{cor}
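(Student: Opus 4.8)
The plan is to deduce this as a direct specialization of Corollary~\ref{cor:nlc, 1}, so the real work is just checking that the present Hardy field setup falls under the hypotheses of Section~\ref{sec:diff cl H-fields}. First I would record that since $M$ is a $\d$-maximal Hardy field, Theorem~\ref{thm:maxhardymainthm} gives $M\supseteq\R$ and that $M$ is a closed (i.e., $H$-closed) $H$-field. Moreover $M$ is a differential subfield of $\Gi$, whose ring of constants is $\R$; combined with $\R\subseteq M$ this yields $C_M=\R$. Hence $H(C_M)=H(\R)$, which is the identification that makes the two statements match.

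Next I would observe that the remaining hypotheses of Corollary~\ref{cor:nlc, 1} are in force: $H$ is a Hardy field and hence a pre-$H$-field, so it is a pre-$H$-subfield of the $H$-closed field $M$, and $H$ is $\upo$-free by assumption. With these in place, Corollary~\ref{cor:nlc, 1} applies verbatim and asserts that the differential closure of $H$ in $M$ is a Newton-Liouville closure of the $\upo$-free $H$-subfield $H(C_M)$ of $M$. Since the differential closure of $H$ in $M$ is by definition $H^{\operatorname{da}}$, and $H(C_M)=H(\R)$ by the constant-field computation, this is exactly the desired conclusion.

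I do not expect any genuine obstacle here: the statement is essentially a translation of Corollary~\ref{cor:nlc, 1} into the Hardy field language, and the only substantive point is the equality $C_M=\R$ (needed so that $H(C_M)$ coincides with $H(\R)$) together with the $H$-closedness of $M$, both of which follow immediately from Theorem~\ref{thm:maxhardymainthm} and the fact that the constants of any Hardy field are real.
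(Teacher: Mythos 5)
Your proposal is correct and is exactly the paper's route: the paper derives this corollary by directly specializing Corollary~\ref{cor:nlc, 1} to the Hardy field setting, the only points to check being that $M$ is $H$-closed with $C_M=\R$ (so $H(C_M)=H(\R)$), which follow from Theorem~\ref{thm:maxhardymainthm} and the fact that the constants of any Hardy field lie in $\R$.
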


\noindent
Next, let $\boldsymbol H(\R)$, $\boldsymbol H^{\operatorname{da}}$, $\boldsymbol M$ be the canonical $\HLO$-expansions of the Hardy fields $H(\R)$, $H^{\operatorname{da}}$, $M$, respectively,
so $\boldsymbol H(\R)\subseteq\boldsymbol H^{\operatorname{da}}\subseteq\boldsymbol M$.      Corollary~\ref{cor:nlc, 2} then yields:

\begin{cor}
$\boldsymbol H^{\operatorname{da}}$ is a Newton-Liouville closure of $\boldsymbol H(\R)$.
\end{cor}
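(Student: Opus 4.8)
The plan is to read off this corollary as the specialization of Corollary~\ref{cor:nlc, 2} to the present Hardy field setting, so the work reduces to checking that the two setups match. First I would note that the running hypotheses of Section~\ref{sec:diff cl H-fields} are in force here: since $M$ is a $\d$-maximal Hardy field, Corollary~\ref{cor:char d-max} (equivalently Theorem~\ref{thm:maxhardymainthm}) gives $M\supseteq\R$ and $M$ is $H$-closed, so $M$ is an $H$-closed field and $H$ is a pre-$H$-subfield of it.

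Next I would identify the constant field. The constants of any Hardy field are germs of eventually constant functions, hence real numbers, and $M\supseteq\R$; therefore $C_M=\R$. Consequently $H(C_M)=H(\R)$, and the $\d$-closure of $H$ in $M$ denoted $H^{\operatorname{da}}$ is the same object in both sections. It then remains to reconcile the two notions of $\HLO$-expansion: since $M$ is $\d$-maximal it carries a unique $\HLO$-expansion $\boldsymbol M$, which is thus simultaneously the expansion fixed in Section~\ref{sec:diff cl H-fields} and the canonical one of the present section. Because $H^{\operatorname{da}}$ is again $\d$-maximal (it is $\d$-closed in $M$, so Corollary~\ref{cor:char d-max} applies) and $H(\R)\subseteq H^{\operatorname{da}}\subseteq M$, the canonical $\HLO$-expansions $\boldsymbol H(\R)$ and $\boldsymbol H^{\operatorname{da}}$ coincide with the sub-$\HLO$-fields $\boldsymbol H(C_M)$ and $\boldsymbol H^{\operatorname{da}}$ of $\boldsymbol M$ appearing in Corollary~\ref{cor:nlc, 2}, by uniqueness of these expansions.

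Once these identifications are made, the assertion is literally Corollary~\ref{cor:nlc, 2}, and nothing further is needed. The only genuine subtlety---and hence the main obstacle---is this last bookkeeping step: confirming that the canonical $\HLO$-expansion, which is characterized through compatibility with all $\d$-maximal Hardy field extensions, agrees with the expansion-as-subfield-of-$\boldsymbol M$ used in the abstract $H$-field discussion. This is immediate once one invokes the uniqueness of the $\HLO$-expansion on the $\d$-maximal Hardy field $M$, after which the result follows by transcription.
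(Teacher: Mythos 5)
Your proposal is correct and follows the paper's route exactly: the paper likewise derives this corollary by direct application of Corollary~\ref{cor:nlc, 2}, with the identifications $C_M=\R$, $H(C_M)=H(\R)$, and the agreement of the canonical $\HLO$-expansions with the sub-$\HLO$-field expansions inside the unique $\HLO$-expansion $\boldsymbol M$ of the $\d$-maximal Hardy field $M$. The bookkeeping you spell out is precisely what the paper leaves implicit.
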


\noindent
We finish with an example   justifying the remark after Theorem~B:

\begin{example}
Let $M$ be a maximal Hardy field, and let 
$\mathbf H$ and
$\mathbf F$ be Newton-Liouville closures  of the canonical $\HLO$-expansions of the Hardy fields $\Q$ and $\Q(\ex)$, respectively. We embed  $\mathbf H$ and $\mathbf F$ in $\mathbf M$ in such a way that upon identifying
$\mathbf H$ and $\mathbf F$ with their images in $\mathbf M$ we have
 $H\subseteq F\subseteq M$ for the corresponding underlying differential fields. Now $H\cap F^\dagger=H\cap F= H=H^\dagger$, since~$H$,~$F$ are Liouville closed.
The constant fields of $H$, $F$ are the real closures in~$\R$ of~$\Q$, $\Q(\ex)$, respectively (see~[ADH, proof of  16.4.9]), so $C_H\neq C_F$.  
Corollary~\ref{cor:RosenlichtOrder1, H-fields} yields a~$y\in F\setminus H$ that is $\d$-algebraic over $H$ with
$C_{E}=C_H$ for $E:=H\langle y\rangle$. If $g\in E\cap\exp(H)$,  then~${g^\dagger\in H=H^\dagger}$, so~$g=ch$ where $c\in C_E^\times$ and~$h\in H^\times$,
and hence $g\in H$ since~$C_E=C_H$.
Thus~${E\cap \exp(H) \subseteq H}$.   
\end{example}

\printendnotes

\newlength\templinewidth
\setlength{\templinewidth}{\textwidth}
\addtolength{\templinewidth}{-2.25em}

\patchcmd{\thebibliography}{\list}{\printremarkbeforebib\list}{}{}

\let\oldaddcontentsline\addcontentsline
\renewcommand{\addcontentsline}[3]{\oldaddcontentsline{toc}{section}{References}}

\def\printremarkbeforebib{\bigskip\hskip1em The citation [ADH] refers to our book \\

\hskip1em\parbox{\templinewidth}{
M. Aschenbrenner, L. van den Dries, J. van der Hoeven,
\textit{Asymptotic Differential Algebra and Model Theory of Transseries,} Annals of Mathematics Studies, vol.~195, Princeton University Press, Princeton, NJ, 2017.
}

\bigskip

} 

\bibliographystyle{amsplain}

\end{document}